\renewcommand{\div}{\operatorname{div}}
\newcommand{\Rr}{{\mathbb{R}}}
\newcommand{\Nn}{{\mathbb{N}}}
\newcommand{\Tt}{{\mathbb{T}}}
\newcommand{\Ll}{{\mathcal{L}}}
\newcommand{\Ff}{{\mathfrak{F}}}
\newcommand{\Aa}{{\mathcal{A}}}
\newcommand{\Bb}{{\mathcal{B}}}
\newcommand{\Uu}{{\mathcal{U}}}
\newcommand{\Oo}{{\mathcal{O}}}
\newcommand{\bx}{{\bf x}}
\newcommand{\bdx}{\dot{\bf x}}
\newcommand{\epsi}{\epsilon}
\def\dx{{\rm d}x}
\def\dt{{\rm d}t}
\def\leq{\leqslant}
\def\geq{\geqslant}
\numberwithin{equation}{section}
\newtheoremstyle{thmlemcorr}{10pt}{10pt}{\itshape}{}{\bfseries}{.}{10pt}{{\thmname{#1}\thmnumber{
#2}\thmnote{ (#3)}}}
\newtheoremstyle{thmlemcorr*}{10pt}{10pt}{\itshape}{}{\bfseries}{.}\newline{{\thmname{#1}\thmnumber{
\newtheoremstyle{defi}{10pt}{10pt}{\itshape}{}{\bfseries}{.}{10pt}{{\thmname{#1}\thmnumber{
#2}\thmnote{ (#3)}}}
\newtheoremstyle{remexample}{10pt}{10pt}{}{}{\bfseries}{.}{10pt}{{\thmname{#1}\thmnumber{
#2}\thmnote{ (#3)}}}
\newtheoremstyle{ass}{10pt}{10pt}{}{}{\bfseries}{.}{10pt}{{\thmname{#1}\thmnumber{
A#2}\thmnote{ (#3)}}}
\theoremstyle{thmlemcorr}
\newtheorem{theorem}{Theorem}
\numberwithin{theorem}{section}
\newtheorem{lemma}[theorem]{Lemma}
\newtheorem{corollary}[theorem]{Corollary}
\theoremstyle{thmlemcorr*}
\newtheorem{theorem*}{Theorem}
\newtheorem{lemma*}[theorem]{Lemma}
\newtheorem{corollary*}[theorem]{Corollary}
\newtheorem{proposition*}[theorem]{Proposition}
\newtheorem{problem*}[theorem]{Problem}
\newtheorem{conjecture*}[theorem]{Conjecture}
\theoremstyle{defi}
\newtheorem{definition}[theorem]{Definition}
\newtheorem{hyp}{Assumption}
\theoremstyle{remexample}
\newtheorem{remark}[theorem]{Remark}
\newtheorem{teo}[theorem]{Theorem}
\newtheorem{lem}[theorem]{Lemma}
\newtheorem{pro}[theorem]{Proposition}
\theoremstyle{ass}
\begin{document}

\title[Monotonicity methods in mean-field games]{An introduction to monotonicity methods\\
	 in mean-field games}

\author{Rita Ferreira}
\address[R. Ferreira]{
        King Abdullah University of Science and Technology (KAUST), CEMSE Division, Thuwal 23955-6900, Saudi Arabia.}
\email{rita.ferreira@kaust.edu.sa}
\author{Diogo Gomes}
\address[D. Gomes]{
        King Abdullah University of Science and Technology (KAUST), CEMSE Division, Thuwal 23955-6900, Saudi Arabia.}
\email{diogo.gomes@kaust.edu.sa}
\author{Teruo Tada}
\address[T. Tada]{
       King Abdullah University of Science and Technology (KAUST), CEMSE Division, Thuwal 23955-6900, Saudi Arabia.}
\email{teruo.tada@kaust.edu.sa}

\keywords{Mean-field games; Weak solutions; Stationary problems; Time-dependent problems; Monotone operators}

\thanks{\textit{MSC (2010):} 35J56, 35A01}

\thanks{R. Ferreira, D. Gomes, and T. Tada were partially supported by baseline and start-up funds from King Abdullah University of Science and Technology (KAUST) OSR-CRG2017-3452 and OSR-CRG2021-4674.}
%\date{\today}

\begin{abstract}
This chapter examines monotonicity techniques in the theory of mean-field games (MFGs). 
Originally, monotonicity ideas were used to establish the uniqueness of solutions for MFGs. Later, 
monotonicity methods and monotone operators were further exploited to build numerical methods and to construct  weak solutions under mild assumptions. Here, after a brief discussion on the mean-field game formulation, we introduce the Minty method and regularization strategies for PDEs. These
are then used 
to  address typical stationary and time-dependent   monotone MFGs and to establish the existence of weak solutions for such MFGs.  
\end{abstract}

\maketitle
\tableofcontents

\section{Introduction}\label{int1}
MFGs model the limit of differential games with a large population as the number of agents tends to infinity. In these models, each agent is rational and optimizes a cost functional.
MFGs were motivated by questions in economics and engineering, and were introduced by Lasry and Lions in \cite{ll1, ll2, ll3} and Caines, Huang, and Malham\'{e} in \cite{Caines2, Caines1}.
Often, these games comprise a system of two equations, a Hamilton--Jacobi equation and a Fokker--Planck equation, which can be associated with a monotone operator. This structure
is key to establish the uniqueness of solutions, as used in \cite{ll2}. Here, we illustrate further applications and techniques. In particular, we address 
how to build weak solutions for
stationary and time-dependent MFGs.

A main difficulty in the MFG theory is to establish the existence of solutions. While Hamilton--Jacobi equations and Fokker--Planck equations are extremely well understood, the coupling between these two equations presents substantial difficulties. Monotone operators ideas give a unified approach to address a wide class of MFGs. This approach, which we illustrate here,  was first introduced in \cite{FG2} and was
motivated by prior works in the numerical approximation of MFGs in \cite{almulla2017two}. Subsequently,
it was further developed in 
\cite{FGT1} for stationary problems with boundary conditions,   in \cite{FeGoTa21} for time-dependent problems,     and in \cite{DRT2021Potential} for the planning problem with congestion. The purpose of the present chapter is to describe in a simplified setting the main ideas on those references.

In recent years, there has been much progress on the existence and regularity of solutions for
MFGs. Most of the available literature relies heavily on the structure of the specific problem under 
consideration and often do not generalize in a straightforward way to other problems.
A historical account of part of the theory can be found in the lectures \cite{LCDF} and is available in 
the books and surveys \cite{cardaliaguet,bensoussan, GNP, GPV, GS,carmona2018probabilistic, MR4214773}. Partial differential equation methods have played an essential role in the development of MFGs from the beginning, see  \cite{ll1, ll2}.
Subsequently, several authors have looked into core matters such as the existence and regularity of solutions. Smooth solutions  were obtained in 
\cite{GM, MR3113415, GPatVrt, PV15} for the stationary case and in  
\cite{GPM2, GPM3, Gomes2015b, Gomes2016c, CirGo20} for the time-dependent case.
These results were obtained by combining the non-linear adjoint method to obtain a priori estimates with the continuation method. 
Around the same
time, the theory of weak solutions saw substatial progress in the 
elliptic case in \cite{bocorsporr, AMFS} and in the parabolic case in 
\cite{porretta, porretta2,  cgbt,MR3691806}. 
Degenerate elliptic or parabolic problems
were examined in \cite{cgbt} and, in the particular
hypoelliptic case, were studied in \cite{DragoniFeleqi2018, MR4132070, MaMaTch2021}.
First-order MFGs present substantial challenges and were
examined in \cite{Cd2, GraCard, MR4175148}.
The theory for MFGs for problems with boundary conditions and state constraints, especially for first-order MFGs, is not yet complete.
The case where there is an invariance condition for the state space was investigated in \cite{MR4045803}.
The authors in \cite{MR3888967, MR4132067, MR4135073, CaCaCa2018} use Lagrangian methods to investigate MFGs with state constraints.
This approach relies on a weak notion of MFG equilibrium defined in a Lagrangian setting rather than with a PDE system.
In  \cite{MR3888967, MR3395203, MaMaTch2021}, the equilibria are probability measures defined on a set of admissible trajectories.
Once the existence of a relaxed MFG equilibrium is ensured, we can investigate the regularity of solutions
and give meaning to the PDE system and the related boundary conditions \cite{MR4135073,CaCaCa2018}.

While our focus here are motonote MFGs, this introduction would not be 
complete without mentioning some of the research on non-monotone MFGs. 
These were studied in particular one-dimensional cases in 
\cite{Gomes2016b}. Then, using PDE methods,
\cite{cirant3,Cirpor2021, ciranttonon} prove the existence under suitable growth assumptions 
for the problem
and in \cite{MR4064664} for the short-time problem. 
Finally, the 
short-time case or small data was examined in \cite{Ambrose} based on the analysis
of the Fourier coefficients of the solutions, a technique derived from prior works in fluid mechanics.

A model stationary MFG is the following. Agents' state is the $d$-dimensional torus, $\Tt^d$. 
The distribution of agents is given by a density $m:\Tt^d\to \Rr_0^+$, not necessarily normalized, where   \(\Rr^+_0=[0,+\infty)\). 
Each agent selects a trajectory $\bx:\Rr_0^+\to \Tt^d$ and
 seeks to minimize the discounted cost
\[
\int_0^{+\infty} e^{-t}(L(\bdx(t))-V(\bx(t)) +g(m(\bx(t))))\,\dt.
\]
The discount term $e^{-t}$ accounts for agents exiting the game at a random time
with an exponential constant rate. These exiting agents are replaced by agents that begin
their trajectories at a random point in $\Tt^d$ chosen according to a rate function $\phi\geq 0$.
The Lagrangian $L:\Rr^d\to \Rr$ is a uniformly convex function (or, alternatively, strictly convex and coercive) that encodes the cost of moving at the speed $\bdx(t)$. The potential $V:\Tt^d\to \Rr$ 
represents the spatial preferences of the agents. Finally, $g:\Rr_0^+\to \Rr$ determines congestion 
effects and is where the interaction between each agent and the population is encoded.
 Let
\begin{equation}
\label{hamlt}
H(p)=\sup_{v\in\Rr^d} (-v \cdot p -L(v)), \quad p\in\Rr^d,
\end{equation} 
be the Legendre transform of $L$. The value function,
\[
u(x)=\inf_{\bx\in W^{1,\infty}(0,+\infty)\atop \bx(0)=x}\int_0^{+\infty} e^{-t}(L(\bdx(t))-V(\bx(t)) +g(m(\bx(t))))\,\dt, 
\]
if regular enough, it
solves the Hamilton--Jacobi equation 
\[
u+H(Du) - g(m) + V =0. 
\]
Further, the optimal trajectory is given in feedback form by 
\[
\bdx=-D_pH(Du(\bx)).
\]
In this model, agents are rational and therefore the whole population 
follows the preceding ODE. This gives rise to the transport equation
\[
m-\div\big(m D_p H(Du)\big) -\phi =0 
\]
that determines $m$. Gathering the Hamilton--Jacobi and the transport equation, 
we obtain the  MFG system 
\begin{equation}\label{mp}
        \begin{cases}
                - u-H(Du) + g(m) - V =0\ \ \mbox{in }\Tt^d,\\
                m-\div\big(m D_p H(Du)\big) -\phi =0 \ \ \mbox{in }\Tt^d.
        \end{cases}
\end{equation}
For more details on the derivation of MFG models, including the time-dependent case, we refer the reader to \cite{GPV}.
The preceding system can be associated with 
the operator
\begin{equation}\label{defF1}
        F
        \begin{bmatrix}
                w \\
                v
        \end{bmatrix}
        =
        \begin{bmatrix}
                -v-H(Dv)+g(w)-V \\
                w-\div(wD_pH(D v))-\phi
        \end{bmatrix}.
\end{equation}
If $g$ is increasing and $H$ is convex, $F$ is monotone on its domain $D\subset L^2(\Tt^d; \Rr^+_0)\times L^2(\Tt^d)$. As we mentioned before, the focus of this chapter is to explore this monotonicity.

A first application of monotonicity is the uniqueness of solutions to MFGs, which is discussed in Section~\ref{mouni}. Concerning existence of solutions, we work with weak solutions. Before 
addressing MFGs, for pedgagogical reasons, we illustrate the main techniques on simpler
PDEs in 
Section~\ref{exmono}. There, we define weak solutions for monotone operators, 
discuss the Minty's method to obtain weak solutions, and illustrate 
two main technical points, regularization strategies and corresponding existence of solutions 
for the regularized problems. We then extend these ideas to address MFGs in Sections~\ref{rpMFG} and \ref{t-dep5}.

\section{Monotonicity and uniqueness}\label{mouni}
Often, the equations  comprising MFGs can be interpreted as
a monotone operator, whose notion we recall next.  Let $H$ be
a Hilbert space endowed with an inner product, \((\cdot,\cdot)\),
and let $D(\cdot)$ be the domain of an operator on \(H\). We
say that an operator, $\Ff: D(F) \subset H \to H$, is monotone
if for every $u, v\in D(\Ff)$, $\Ff$ satisfies
\begin{equation}\label{def:mono}
\big(\Ff(u)-\Ff(v), u-v\big)\geq 0.
\end{equation}
We say that  $\Ff$
is strictly monotone if for all  $u,v \in D(\Ff)\subset H$  such
that $u\not\equiv
v$,  we have
\begin{equation}\label{def:stmono}
(\Ff(u)-\Ff(v), u-v )> 0.
\end{equation} 
If \(\Ff\) is strictly monotone, then there exists at most one
solution to the problem of finding \(u\in D(\Ff)\) for which \(\Ff(u)=0\).
In fact, if \(\Ff\) is strictly monotone and   $u$, $v\in D(\Ff)$
are such that  $\Ff(u)=\Ff(v)=0$, then the left-hand side of \eqref{def:stmono}
vanishes, which can only be if \(u\equiv v\).

\begin{pro}
\label{p21}
Consider 
the operator $F$ given by \eqref{defF1} associated with the MFG in \eqref{mp}.
Assume that $H$ is convex,  $g$ is increasing,   and let $D(F)=C^2(\Tt^d;
\Rr_0^+)\times C^1(\Tt^d)$. Then, $F$ is monotone in $L^2(\Tt^d)\times L^2(\Tt^d)$. \end{pro}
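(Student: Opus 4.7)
The plan is to compute the $L^2 \times L^2$ inner product $\bigl(F[w_1,v_1]^\top - F[w_2,v_2]^\top,\; [w_1-w_2,\,v_1-v_2]^\top\bigr)$ directly and show that it splits into a sum of nonnegative pieces, each handled by one of the three structural hypotheses: monotonicity of $g$, convexity of $H$, and nonnegativity of $w$ in the admissible class.

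First I would write out the difference of the two components. The potentials $V$ and $\phi$ are independent of $(w,v)$, so they cancel. The zeroth-order pieces in each equation produce the cross contribution
\[
\int_{\Tt^d}(v_2-v_1)(w_1-w_2)\,\dx + \int_{\Tt^d}(w_1-w_2)(v_1-v_2)\,\dx = 0,
\]
which eliminates the indefinite coupling. What remains after this cancellation is
\[
\int_{\Tt^d}(g(w_1)-g(w_2))(w_1-w_2)\,\dx + \int_{\Tt^d}(H(Dv_2)-H(Dv_1))(w_1-w_2)\,\dx + \int_{\Tt^d}\bigl[\div(w_2 D_pH(Dv_2))-\div(w_1 D_pH(Dv_1))\bigr](v_1-v_2)\,\dx.
\]

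Next I would integrate the divergence terms by parts; the regularity in $D(F)$ is more than enough to justify this, and since $\Tt^d$ has no boundary there are no boundary contributions. This rewrites the last integral as
\[
\int_{\Tt^d} w_1 D_pH(Dv_1)\cdot D(v_1-v_2)\,\dx - \int_{\Tt^d} w_2 D_pH(Dv_2)\cdot D(v_1-v_2)\,\dx.
\]
Now I would regroup all terms according to the $w_i$ they multiply. The $g$-term is nonnegative because $g$ is nondecreasing. The remaining terms collect as
\[
\int_{\Tt^d} w_1\bigl[H(Dv_2)-H(Dv_1)-D_pH(Dv_1)\cdot(Dv_2-Dv_1)\bigr]\,\dx + \int_{\Tt^d} w_2\bigl[H(Dv_1)-H(Dv_2)-D_pH(Dv_2)\cdot(Dv_1-Dv_2)\bigr]\,\dx,
\]
each of which is a Bregman-type quantity. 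Convexity of $H$ makes each bracketed expression pointwise nonnegative, and since $w_1,w_2\geq 0$ on $D(F)$, each integrand is nonnegative. This gives \eqref{def:mono}.

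The only substantive step is the integration by parts, which is completely painless here because of the $C^2\times C^1$ regularity baked into $D(F)$ and the absence of boundary. The true conceptual content is the recognition that after cancellation of the cross terms the residual is precisely $w_1$ and $w_2$ against the convexity defect of $H$, so that monotonicity of $F$ reduces to the three elementary ingredients $g$ increasing, $H$ convex, and $w\geq 0$. No additional growth or coercivity hypothesis on $H$ or $g$ is needed for this qualitative statement.
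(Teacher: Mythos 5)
Your proposal is correct and follows exactly the argument the paper uses: compute the $L^2$ pairing, observe that the zeroth-order cross-terms $\mp(v_1-v_2)(w_1-w_2)$ cancel, integrate the divergence terms by parts on the torus, and regroup into the two Bregman quantities weighted by $w_1,w_2\geq 0$ plus the monotone $g$-term, which is precisely the identity \eqref{A1mono}. Nothing to add.
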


\begin{proof}
Let $\langle \cdot,\cdot\rangle$ denote the inner product in  $L^2(\Tt^d)\times L^2(\Tt^d)$.
Given
$(m_1, u_1)$, $(m_2, u_2)\in D(F)$, integrating by parts
and using the assumptions on $H$ and $g$ yields
\begin{align}\label{A1mono}
                &\left\langle 
                F
                \begin{bmatrix}
                        m_1 \\
                        u_1
                \end{bmatrix}
                -
                F
                \begin{bmatrix}
                        m_2 \\
                        u_2
                \end{bmatrix}
                ,
                \begin{bmatrix}
                        m_1 \\
                        u_1
                \end{bmatrix}
                -  \begin{bmatrix}
                        m_2 \\
                        u_2
                \end{bmatrix}
                \right\rangle\notag\\&\quad
                =
                \int_{\Tt^d}
                m_1\big(H(D u_2)- H(Du_1)- D_p H(D u_1) \cdot (D u_2
                - D u_1) \big)\,\dx\notag\\
                &\qquad+\int_{\Tt^d}
                m_2\big(H(Du_1) - H(D u_2)-  D_p H(Du_2)\cdot (D u_1-D
                u_2)\big) \,\dx\notag\\
                &\qquad+\int_{\Tt^d}(m_1-m_2)(g(m_1)- g(m_2)) 
                \,\dx
                \geq 0,
\end{align}
which establishes the monotonicity of $F$. 
\end{proof}

\begin{remark}
Under the assumptions of the preceding proposition, 
the following second-order stationay MFG
\begin{equation}\label{exs1}
        \begin{cases}
                u - \Delta u + H(Du) - g(m) - V(x)=0&\mbox{in}\ \Tt^d,\\
                m - \Delta m - \div(mD_p H(Du)) - \phi(x)=0&\mbox{in}\ \Tt^d,
        \end{cases}
\end{equation}
and time-dependent MFG 
\begin{equation}\label{ext1}
        \begin{cases}
                - u_t - \Delta u + H(Du) - g(m) - V=0& \mbox{in}\ (0,T)\times\Tt^d,\\
                m_t - \Delta m - \div(mD_p H(Du)) =0& \mbox{in}\ (0,T)\times\Tt^d,\\
        \end{cases}
\end{equation}
with initial-terminal boundary conditions
\begin{equation}
        \label{ext1bc}
                m(0,\cdot)=m_0,\ u(T,\cdot)=u_T \quad  \mbox{in}\ \Tt^d,
\end{equation}
with $m_0:\Tt^d\to \Rr_0^+$, $\int_{\Tt^d} m_0=1$, and $u_T:\Tt^d \to \Rr$, share the same monotonone structure. More precisely,  
define $F_1\colon D(F_1) \subset L^2(\Tt^d)\times L^2(\Tt^d)
\to L^2(\Tt^d)\times L^2(\Tt^d)$
\begin{equation*}
        \begin{aligned}
                F_1
                \begin{bmatrix}
                        m \\
                        u
                \end{bmatrix}= 
                \begin{bmatrix}
                        -u + \Delta u - H(D u) + g(m) + V \\
                        m - \Delta m - \div(mD_pH(D u))-\phi
                \end{bmatrix}, \quad (m,u) \in D(F_1),
        \end{aligned}
\end{equation*}
 and 
$F_2\colon D(F_2)\subset L^2([0,T]\times \Tt^d)\times L^2([0,T]\times \Tt^d) \to  L^2([0,T]\times \Tt^d)\times L^2([0,T]\times \Tt^d) $ by 
\begin{equation*}
        \begin{aligned}
                F_2
                \begin{bmatrix}
                        m \\
                        u
                \end{bmatrix}= 
                \begin{bmatrix}
                        u_t + \Delta u - H(D u) + g(m) + V \\
                        m_t - \Delta m - \div(mD_pH(D u)) 
                \end{bmatrix}, \quad (m,u) \in D(F_2).
        \end{aligned}
\end{equation*}
By selecting $D(F_1)=C^2(\Tt^d; \Rr_0^+)\times C^2(\Tt^d)$ and 
\[
D(F_2)=\{m\in C^2([0, T]\times \Tt^d ;\Rr_0^+): m(x,0)=m_0\}
\times \{ u\in C^2([0, T]\times \Tt^d): u(x, T)=u_T\}
\]
then, following the steps in the proof of Proposition 
\ref{p21}, we obtain the monotonicity of $F_1$ and $F_2$ in 
$L^2(\Tt^d)\times L^2(\Tt^d)$
and
$L^2([0,T]\times \Tt^d)\times L^2([0,T]\times \Tt^d)$, respectively. 
\end{remark}

The previous operators are not strictly monotone, even if $H$ is strictly convex and $g$ is strictly increasing; the lack of strict monotonicity arises from the possibility of $m$ vanishing, thus
\eqref{A1mono} could be identically $0$ without $Du_1=Du_2$. 
Thus, to prove uniqueness of solutions, 
we either need to add further conditions on those solutions or provide an additional arguments
that give the strict positivity of $m$. 
\begin{pro}
Consider 
the operator $F$ given by \eqref{defF1} associated with the MFG in \eqref{mp}.  
Assume that $H$ is strictly convex,  $g$ is strictly increasing, and $\phi>0$ in $\Tt^d$. Then,
there is at most one solution $(m, u)\in C^2(\Tt^d; \Rr_0^+)\times C^1(\Tt^d)$ to \eqref{mp}. 
\end{pro}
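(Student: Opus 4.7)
The plan is to exploit the monotonicity computation \eqref{A1mono} already carried out in the proof of Proposition~\ref{p21}. Given two solutions \((m_1,u_1)\), \((m_2,u_2)\in C^2(\Tt^d;\Rr_0^+)\times C^1(\Tt^d)\) of \eqref{mp}, we have \(F[m_i,u_i]=0\), so the inner product on the left-hand side of \eqref{A1mono} vanishes. Since each of the three integrands on the right-hand side is non-negative (by convexity of \(H\), non-negativity of \(m_i\), and monotonicity of \(g\)), each of the three integrals is identically zero. The third,
\[
\int_{\Tt^d}(m_1-m_2)(g(m_1)-g(m_2))\,\dx=0,
\]
together with strict monotonicity of \(g\), yields \(m_1\equiv m_2\) on \(\Tt^d\); call this common density \(m\).

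The key intermediate step, and the main obstacle, is to show that \(m>0\) on \(\Tt^d\); without this, strict convexity of \(H\) will not deliver \(Du_1\equiv Du_2\) from the vanishing of the first two integrals of \eqref{A1mono}. For this I would use the hypothesis \(\phi>0\) together with a minimum-principle argument applied to the transport equation in \eqref{mp}. Since \(m\in C^2(\Tt^d;\Rr_0^+)\) attains its minimum at some \(x_0\in\Tt^d\), if \(m(x_0)=0\) then \(x_0\) is an interior minimum and \(Dm(x_0)=0\). Because both \(m\) and \(Dm\) vanish at \(x_0\) and \(D_pH(Du)\) is continuous, the product \(mD_pH(Du)\) is differentiable at \(x_0\) with derivative zero, so \(\div(mD_pH(Du))(x_0)=0\). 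Substituting into the second equation of \eqref{mp} at \(x_0\) gives \(0=\phi(x_0)\), contradicting \(\phi(x_0)>0\). Hence \(m>0\) everywhere.

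With strict positivity of \(m\) in hand, I return to the first two integrands of \eqref{A1mono}. Each has the form \(m\) times a non-negative convex-error term, so each convex-error term vanishes pointwise. Strict convexity of \(H\) then forces
\[
Du_1\equiv Du_2 \quad\text{on }\Tt^d.
\]
Finally, to conclude \(u_1\equiv u_2\), I would subtract the first equations in \eqref{mp} for the two solutions: using \(m_1\equiv m_2\) and \(Du_1\equiv Du_2\), the \(H\), \(g\), and \(V\) terms cancel, leaving \(u_1-u_2=0\) pointwise. This completes the argument; the only delicate point is the positivity of \(m\), which is where the structural assumption \(\phi>0\) is essential.
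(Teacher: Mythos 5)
Your proposal is correct and follows essentially the same route as the paper: positivity of $m$ via a minimum-principle argument exploiting $\phi>0$, then the monotonicity identity \eqref{A1mono} to force each integrand to vanish, then strict monotonicity of $g$ and strict convexity of $H$ (the latter requiring $m>0$) to conclude $m_1\equiv m_2$ and $Du_1\equiv Du_2$, and finally the Hamilton--Jacobi equation to get $u_1\equiv u_2$. The only difference is a reordering: you extract $m_1\equiv m_2$ from the $g$-term (which needs no positivity) before proving $m>0$, whereas the paper establishes $m>0$ for any solution first; both orderings are valid.
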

\begin{proof}
Let $(m, u)\in C^2(\Tt^d; \Rr_0^+)\times C^1(\Tt^d)$ solve \eqref{mp}.
We first prove that $m>0$.
Assume by contradiction 
that $\min_{x\in \Tt^d} m=m(x^\ast)=0$ for some $x^\ast \in \Tt^d$.
Then, $\div(m(x^\ast)D_p H(D u(x^\ast)))=0$ since both $m$ and $Dm$ vanish at $x^\ast$.
Thus
\begin{equation*}
        0 = m(x^\ast)-\div(m(x^\ast)D_p H(D u(x^\ast)))-\phi(x^\ast)
=
        - \phi(x^\ast)<0,
\end{equation*}
which is a contradiction. Hence, $m$ is strictly positive in
$\Tt^d$. 

To prove uniqueness, let $(m_1, u_2)$ and $(m_2,u_2)$ be solutions to \eqref{mp} in 
$C^2(\Tt^d; \Rr_0^+)\times C^1(\Tt^d)$. Then $m_1$ and $m_2$ are strictly positive. Consequently, 
\eqref{A1mono}, combined with the convexity of $H$ and the monotonicity of $g$, 
gives
\begin{align*}
&H(D u_2)- H(Du_1)- D_p H(D u_1) \cdot (D u_2
        - D u_1)=0,\\
        &
        H(Du_1) - H(D u_2)-  D_p H(Du_2)\cdot (D u_1-D
        u_2)=0,\\
        &(m_1-m_2)(g(m_1)- g(m_2)) =0.
\end{align*}
Therefore, the strict convexity of $H$ gives $Du_1=Du_2$ and the strict monotonicity of $g$ 
gives $m_1=m_2$. Finally, the Hamilton--Jacobi equation combined with these facts gives $u_1=u_2$.
\end{proof}

\begin{remark}
The uniqueness for the second-order stationary MFG in \eqref{exs1} is handled similarly. For the second-order time-dependent case, the argument must be modified slightly: we require $m_0>0$
and use the parabolic maximum principle to get $m>0$.  
\end{remark}

\section{Minty's method and regularization of monotone operators}\label{exmono}

Before considering MFGs, we focus on monotone operators and explain 
Minty's method for studying the kernel of such
operators. The main idea of this method is to construct a convenient
monotone approximation for which the kernel is simpler to study. 
Then, using the  monotonicity
and asymptotic analysis, we build a weak solution for the original problem.
 We refer the reader to \cite{Eva} for a detailed
description of Minty's method, where this method
is used
for finding weak solutions for nonlinear PDEs.

Let $H$ be a Hilbert space and assume that $F\colon D(F)\subset
H \to H$ is a  monotone operator (see \eqref{def:mono}). The study
of the kernel of \(F\) is often formulated as a (strong) variational
inequality: find $u\in D(F)$ such that for all $v \in D(F)$,
we
have\begin{equation}\label{vari3}
(F(u), u- v) \leq 0.
\end{equation}
Note that if \(D(F)=H\), then taking \(v=u\pm w\) for an arbitrary
\(w\in H\), we conclude that \eqref{vari3} is equivalent to finding
\(u\in H\) such that \(F(u)=0\). 

In many instances, the (strong) variational
inequality
is either hard to address or does not have solutions, which motivates
the so-called  weak variational inequality associated with
\(F\). This weak variational inequality consists of finding 
$u\in H$ such that for all $v \in D(F)$,
we
have\begin{equation}\label{wvari3}
(F(v), u- v) \leq 0.
\end{equation}
A weak
solution to \eqref{vari3} is 
an element $u\in H$ 
 satisfying \eqref{wvari3}.
 Because of the monotonicity of  \(F\), any (strong)
solution to \eqref{vari3}  is also a weak solution to \eqref{vari3}.
Conversely, if  \(F\) is continuous and \(D(F)\) is convex, then
weak solutions belonging to \(D(F)\) are also strong solutions.
In fact, let \(u\in D(F)\) be a weak
solution to \eqref{vari3}, let   \(\lambda \in (0,1)\), fix \(\bar
v\in D(F)\), and set \(v= \lambda u + (1-\lambda)\bar v \in D(F)\).
Then, by \eqref{wvari3}, we have
\begin{equation*}
\begin{aligned}
0\geq \big(F(\lambda u + (1-\lambda)\bar v), (1-\lambda)(u-\bar
v)\big). \end{aligned}
\end{equation*}
Dividing the preceding inequality by
\(1-\lambda\), and then letting \(\lambda\to 1^-\), we obtain
\begin{equation*}
\begin{aligned}
0\geq(F(u), u- \bar v) 
\end{aligned}
\end{equation*}
by the continuity
of \(F.\)   Because  \(\bar
v\in D(F)\) is arbitrary, we conclude that \eqref{vari3} holds;
that
is, \(u\) is also a strong solution to \eqref{vari3}. 

A common method to finding weak solutions  to \eqref{vari3} is
Minty's
method, which is hinged on finding a convenient monotone approximation
of \(F\). More precisely,  assume that there exists  a dense subset, \(D\subset
D(F)\),   and, for each \(k\in\Nn\), there
exists an operator, $F_k\colon D\subset H\to H$, satisfying the  following
 conditions: 
\begin{enumerate}
\item \textit{(monotonicity)} 
for all $v$, $u \in D$, we have
\begin{equation}\label{mono3-2}
(F_k(u) - F_k(v), u- v) \geq 0;
\end{equation}
\item \textit{(convergence to \(F\)}) for all $v\in D$, we have
\begin{equation}\label{app3-1}
F_k(v)\to F(v)\ \ \mbox{strongly in }H;
\end{equation}
\item \textit{(lower semicontinuity-type condition)} for all \(v\in D(F)\), there exists
$\{v_k\}_{k=1}^\infty\subset D$ such that 
\begin{equation}\label{eq:conttype}
        \begin{aligned}
                v_k\to v \text{ in } H \text{ and } F(v_k) \rightharpoonup F(v) 
                \text{  weakly in } H.
        \end{aligned}
\end{equation}
\item \textit{(strong solutions)} there exists \(u_k\in D\) such
that for all $v\in D$, we have
\begin{equation}\label{ss3-1} 
(F_k(u_k), u_k- v)\leq0;
\end{equation}
\item \textit{(compactness)} given $\{u_k\}_{k=1}^\infty$  as in \eqref{ss3-1},
 there exists \(\bar u\in H\)
such
that, extracting a subsequence if necessary,  we have
\begin{equation}\label{wcuktou} 
 u_k \rightharpoonup \bar u \ \ \mbox{weakly in }H;
\end{equation}
\end{enumerate}

\begin{pro}[Minty's method]
Let $F$ be a monotone operator in a Hilbert space $H$ and $F_k$ be as above satisfying conditions 1--5. 
Then, any $\bar u\in H$ given by \eqref{wcuktou} is a weak solution to \eqref{vari3}.
\end{pro}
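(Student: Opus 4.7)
The plan is to fix an arbitrary test function $v\in D(F)$ and show that $(F(v),\bar u -v)\leq 0$, thereby verifying \eqref{wvari3} with $u=\bar u$. I will proceed in two stages: first establish the inequality for $v$ in the dense subset $D$, then pass to general $v\in D(F)$ using the lower semicontinuity-type condition~3.

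\textbf{Stage 1 (test functions in $D$).} Fix $v\in D$. The monotonicity of $F_k$ in \eqref{mono3-2} gives
\begin{equation*}
(F_k(u_k),u_k-v)\geq (F_k(v),u_k-v).
\end{equation*}
Combining this with the strong-solution property \eqref{ss3-1} yields $(F_k(v),u_k-v)\leq 0$ for every $k$. Now I want to pass to the limit as $k\to\infty$. Here I would exploit the fact that the inner product is continuous with respect to strong/weak convergence: by \eqref{app3-1} the factor $F_k(v)$ converges strongly in $H$ to $F(v)$, while by \eqref{wcuktou} the factor $u_k-v$ converges weakly in $H$ to $\bar u - v$. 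Passing to the limit therefore gives
\begin{equation*}
(F(v),\bar u - v)\leq 0\quad\text{for every }v\in D.
\end{equation*}

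\textbf{Stage 2 (test functions in $D(F)$).} Let now $v\in D(F)$ be arbitrary. By the lower semicontinuity-type condition \eqref{eq:conttype}, there is a sequence $\{v_j\}\subset D$ with $v_j\to v$ strongly in $H$ and $F(v_j)\rightharpoonup F(v)$ weakly in $H$. Applying Stage~1 to each $v_j$, we obtain $(F(v_j),\bar u - v_j)\leq 0$. Letting $j\to\infty$ and again using that the pairing of a weakly convergent and a strongly convergent sequence passes to the limit, we conclude $(F(v),\bar u - v)\leq 0$, so $\bar u$ satisfies \eqref{wvari3}.

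The step I expect to be the most delicate is the bookkeeping of strong versus weak convergence: both limit passages require one factor strong and one factor weak, and this is precisely what conditions~2 and~5 (for Stage~1) and condition~3 (for Stage~2) are designed to provide. Once this is clear, the argument is a straightforward combination of the monotonicity inequality with the strong-solution bound \eqref{ss3-1}, and no further calculation is needed.
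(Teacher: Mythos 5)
Your proof is correct and follows essentially the same route as the paper's: combine monotonicity \eqref{mono3-2} with \eqref{ss3-1} to get $(F_k(v),u_k-v)\leq 0$ for $v\in D$, pass to the limit using \eqref{app3-1} (strong) paired with \eqref{wcuktou} (weak), and then extend to all of $D(F)$ via \eqref{eq:conttype}. The only difference is that you spell out the strong/weak pairing more explicitly, which is a fair expansion rather than a different argument.
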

\begin{proof}
By combining \eqref{mono3-2} with \(u\) replaced by $u_k$
and \eqref{ss3-1}, we obtain that \((F_k(v), u_k- v) \leq 0\)
for all \(v\in D\).
Letting \(k\to\infty\) in this estimate, \eqref{app3-1} and \eqref{wcuktou}
yield
\begin{equation}\label{wsvi3-1}
(F(v), \bar u- v) \leq 0
\end{equation}
for all \(v\in D\). Finally, using \eqref{eq:conttype}, we conclude that
\eqref{wsvi3-1} holds for all \(v\in
D(F)\), which shows that \(\bar u\) is a weak solution
 to \eqref{vari3}.
\end{proof}

In the following subsections, we illustrate how to construct such
monotone
approximations, \(F_k\).

%Regularization
\subsection{Regularization strategies}\label{streg}
We now show how to construct monotone approximations as
mentioned above. We will use a regularization method based on
higher-order perturbations.  Rather than addressing MFG models, we consider
a simpler linear operator to illustrate the main ideas. In the next section, we examine operators that arise in MFGs. 
 
To simplify, assume that the domain is the $d$-dimensional torus,
\(\Tt^d\), and fix ${\bf b} \in C^1 (\Tt^d; \Rr^d)$ with $\div
({\bf
b}) =0$ and $f\in C^1(\Tt^d)$ with $f\not\equiv 0$.   Let \(F:H^1(\Tt^d)\subset
L^2(\Tt^d) \to L^2(\Tt^d)\) be defined
by
\begin{equation}
\label{eq:egF}
\begin{aligned}
F(u)=  u - {\bf b} \cdot Du - f
\end{aligned}
\end{equation}
for \(u\in H^1(\Tt^d)\). 
Consider the problem of finding \(u\in H^1(\Tt^d)\) such
that $F(u)=0$. Observe that $u\equiv 0$
is not a solution
to this problem because  $f\not\equiv 0$.
Moreover, we cannot apply the Lax--Milgram theorem
directly to solve \eqref{eq:egF} because the bilinear form associated with
\eqref{eq:egF} is neither continuous in $L^2(\Tt^d)$ and nor coercive in
$H^1(\Tt^d)$.
Instead, we use Minty's method as described above, for which we start by establishing the monotonicity of \(F\). 

\begin{pro}\label{prop:Ffmon}
The operator $F$ given by  \eqref{eq:egF} is monotone in $L^2(\Tt^d)$.
\end{pro}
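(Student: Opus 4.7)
The plan is to compute $\langle F(u)-F(v), u-v\rangle_{L^2(\Tt^d)}$ directly, exploiting the linearity of $F$ so that the constant term $f$ cancels and only the operator $u\mapsto u - \mathbf{b}\cdot Du$ contributes. Setting $w:=u-v\in H^1(\Tt^d)$, the expression splits as
\begin{equation*}
\langle F(u)-F(v), u-v\rangle = \int_{\Tt^d} w^2\,\dx - \int_{\Tt^d} (\mathbf{b}\cdot Dw)\, w\,\dx.
\end{equation*}
The first term is manifestly nonnegative, so the whole argument reduces to showing that the transport term vanishes.

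For the transport term, I would use the chain rule identity $(\mathbf{b}\cdot Dw)\,w = \tfrac{1}{2}\,\mathbf{b}\cdot D(w^2)$, valid for $w\in H^1(\Tt^d)$ since $w^2\in W^{1,1}(\Tt^d)$ with $D(w^2)=2wDw$. Integrating by parts on the torus (no boundary terms) and using the hypothesis $\div(\mathbf{b})=0$ yields
\begin{equation*}
\int_{\Tt^d} (\mathbf{b}\cdot Dw)\, w\,\dx = \tfrac{1}{2}\int_{\Tt^d} \mathbf{b}\cdot D(w^2)\,\dx = -\tfrac{1}{2}\int_{\Tt^d} \div(\mathbf{b})\, w^2\,\dx = 0.
\end{equation*}
Combining the two computations gives $\langle F(u)-F(v), u-v\rangle = \|u-v\|_{L^2(\Tt^d)}^2 \geq 0$, establishing monotonicity (and, as a bonus, strict monotonicity).

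The only mildly delicate point is justifying the integration by parts for merely $H^1$ functions rather than $C^1$ ones, since $\mathbf{b}$ is only $C^1$. This is handled either by a direct $W^{1,1}$ integration-by-parts argument on $\Tt^d$ (no boundary contributions), or by approximating $w$ by smooth periodic functions $w_n\to w$ in $H^1(\Tt^d)$, using the identity for $w_n$, and passing to the limit via the uniform bound $\|\mathbf{b}\|_{L^\infty}$ together with $L^2$-convergence of $w_n$ and $Dw_n$. Beyond this technicality, the proof is essentially a one-line integration by parts, so I expect no real obstacle.
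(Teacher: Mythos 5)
Your proof is correct and is essentially identical to the paper's argument: set $w=u-v$, write $(\mathbf{b}\cdot Dw)\,w=\tfrac12\mathbf{b}\cdot D(w^2)$, integrate by parts on the torus, and use $\div(\mathbf{b})=0$ to kill the transport term, leaving $\|u-v\|_{L^2}^2\geq 0$. The additional remark justifying the integration by parts at the $H^1$ level is a welcome technical clarification but does not change the approach.
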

\begin{proof}
 Given
$u$,  $v \in H^1(\Tt^d) $,  integrating by parts and Proposition~\ref{prop:Ffmon} yield
\begin{align*}
\left(F(u) -F(v),  u-v \right)
&=
\int_{\Tt^d} \big( (u-v)^2 - {\bf b} \cdot D(u-v) (u-v) \big)\,\dx
=
\int_{\Tt^d} \Big( (u-v)^2 - \frac{1}{2}{\bf b} \cdot D (u-v)^2
\Big)\,\dx\\
&=
\int_{\Tt^d} \Big( (u-v)^2 + \frac{1}{2}\div({\bf b}) (u-v)^2
\Big)\,\dx
=
\int_{\Tt^d} (u-v)^2 \,\dx \geq 0.\qedhere
\end{align*}
\end{proof}

To use Minty's method, we must construct an approximation
of \(F\) that preserves monotonicity. One possibility is to choose, for $\epsilon >0$, 
the regularized operator
\begin{equation}
\label{eq:Flin}
F_\epsilon(u)= F(u) + \epsilon (u +\Delta^2 u)
\end{equation}
with domain $D=H^4(\Tt^d)$.

\begin{pro}
The operator $F_\epsilon$ given by \eqref{eq:Flin} satifies conditions \eqref{mono3-2}, \eqref{app3-1}, 
\eqref{eq:conttype}, and \eqref{wcuktou}.       
\end{pro}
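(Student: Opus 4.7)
The plan is to verify each of the four conditions in turn; none is particularly deep, so the main task is to keep the integrations by parts straight and to extract a uniform bound for (5).

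\textbf{Monotonicity \eqref{mono3-2}.} I would compute, for $u,v\in H^4(\Tt^d)$,
\begin{equation*}
(F_\epsilon(u)-F_\epsilon(v),u-v) = (F(u)-F(v),u-v) + \epsilon\|u-v\|_{L^2}^2 + \epsilon\int_{\Tt^d}\Delta^2(u-v)\,(u-v)\,\dx.
\end{equation*}
The first term is nonnegative by Proposition~\ref{prop:Ffmon}. Integrating by parts twice on the torus (no boundary terms) converts the last integral into $\|\Delta(u-v)\|_{L^2}^2\geq0$, which yields \eqref{mono3-2}.

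\textbf{Convergence \eqref{app3-1}.} For fixed $v\in D=H^4(\Tt^d)$ the quantity $v+\Delta^2 v$ lies in $L^2(\Tt^d)$, so
\begin{equation*}
\|F_\epsilon(v)-F(v)\|_{L^2} = \epsilon\,\|v+\Delta^2 v\|_{L^2} \xrightarrow[\epsilon\to 0]{} 0.
\end{equation*}

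\textbf{Lower semicontinuity-type condition \eqref{eq:conttype}.} Given $v\in D(F)=H^1(\Tt^d)$, I would mollify on the torus: let $\rho_k$ be a standard mollifier and set $v_k=\rho_k\ast v\in C^\infty(\Tt^d)\subset H^4(\Tt^d)$. Then $v_k\to v$ in $H^1(\Tt^d)$ by standard mollification, and since $F$ acts continuously as $H^1(\Tt^d)\to L^2(\Tt^d)$ (the coefficient $\mathbf b$ is bounded), one has $F(v_k)\to F(v)$ strongly, hence weakly, in $L^2(\Tt^d)$.

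\textbf{Compactness \eqref{wcuktou}.} Here I need a uniform $L^2$ bound on the $u_k$ associated with $\epsilon_k\to 0^+$. Testing \eqref{ss3-1} with $v=0\in D$ gives $(F_{\epsilon_k}(u_k),u_k)\leq 0$. Expanding, using $\div\mathbf b=0$ so that $\int \mathbf b\cdot Du_k\,u_k=-\tfrac12\int\div(\mathbf b)u_k^2=0$, and discarding the nonnegative higher-order term $\epsilon_k\|\Delta u_k\|_{L^2}^2$, one obtains
\begin{equation*}
(1+\epsilon_k)\,\|u_k\|_{L^2}^2 \leq \int_{\Tt^d} f\,u_k\,\dx \leq \|f\|_{L^2}\,\|u_k\|_{L^2},
\end{equation*}
so $\|u_k\|_{L^2}\leq \|f\|_{L^2}$ uniformly in $k$. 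The Banach--Alaoglu theorem then produces a subsequence converging weakly in $L^2(\Tt^d)$ to some $\bar u$, which is \eqref{wcuktou}.

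The only step with any subtlety is the compactness one, where the key observation is that the divergence-free condition on $\mathbf b$ eliminates the transport term when testing against $u_k$ itself; the remaining conditions are direct computations.
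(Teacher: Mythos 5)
Your proof is correct and follows essentially the same route as the paper for all four conditions: integration by parts for monotonicity, the trivial $\epsilon$-scaling for \eqref{app3-1}, density of $H^4(\Tt^d)$ in $H^1(\Tt^d)$ (mollification being one way to realize it) for \eqref{eq:conttype}, and a uniform $L^2$ a priori bound for \eqref{wcuktou}. Your compactness step is actually slightly sharper: you test \eqref{ss3-1} with $v=0$, discard the nonnegative term $\epsilon_k\|\Delta u_k\|_{L^2}^2$, obtain the explicit bound $\|u_k\|_{L^2}\leq\|f\|_{L^2}$, and invoke Banach--Alaoglu, whereas the paper (after deriving the analogous estimate directly from $F_\epsilon(u_\epsilon)=0$) speaks of ``uniform a priori estimates in $H^2$'' and Rellich--Kondrachov, even though the bound on $\Delta u_\epsilon$ it produces degenerates like $\epsilon^{-1/2}$ as $\epsilon\to0$, so that only the $L^2$ bound is in fact uniform and weak $L^2$ compactness is exactly what is used.
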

\begin{proof}

For all $u$, $v \in H^{4}(\Tt^d)$,
an integration by parts yields
\begin{equation*}%\label{eq:FepsiSM}
\left(F_\epsilon(u) - F_\epsilon (v), u-v\right)
=
\left(F(u) - F (v), u-v\right) + \epsilon\big(\| u - v \|_{L^2(\Tt^d)}^2+
\|\Delta u -\Delta v\|_{L^2(\Tt^d)}^2\big)
\geq 0.
\end{equation*}
Thus,  $F_\epsilon$ is (strictly) monotone over $L^2(\Tt^d)$. Hence, \eqref{mono3-2} holds. 

Because $\epsilon (v+\Delta^{2}v)\to 0$ strongly in $L^2$ for $v\in H^{4}(\Tt^d)$, \eqref{app3-1}
also holds. 

Next, by density of $H^{4}(\Tt^d)$ in $H^1(\Tt^d)$,
for any $v\in H^1(\Tt^d)$, 
 there exists a sequence $\{v_k\}_k$ in $H^{4}(\Tt^d)$
that converges strongly in $H^1(\Tt^d)$ to $v$.
Hence,  $v_k \to v$ in $L^2(\Tt^d)$ and $F(v_k) \rightharpoonup F(v)$
weakly in $L^2(\Tt^d)$. Thus, \eqref{eq:conttype} holds.

Assume that there exists $u_\epsi\in H^{4}(\Tt^d)$ such that
\begin{equation}\label{req4-1}
F_\epsilon(u_\epsilon) = 0 \enspace\text{ in } \Tt^d.
\end{equation}
Then, multiplying both sides of \eqref{req4-1}
by $u_\epsi$, integrating by parts, and using the condition
$\div({\bf b})=0$, we get
\begin{equation*}
0=
\int_{\Tt^d}\big( F(u_\epsi)u_\epsi + \epsilon\big( u_\epsi^2
+ (\Delta u_\epsi)^2\big)\big)\,\dx
=
\int_{\Tt^d}\big( u_\epsi^2 + f(x)u_\epsi + \epsilon\big( u_\epsi^2
+ (\Delta  u_\epsi)^2\big)\big)\,\dx.
\end{equation*}
Applying Young's inequality, we conclude that there exists a
positive constant, $C$, such that 
\begin{equation*}%\label{apri4-1}
\int_{\Tt^d}\big( u_\epsi^2 + \epsilon\big( u_\epsi^2 + (\Delta
 u_\epsi)^2\big)\big)\,\dx\leq C.
\end{equation*}
Because $C$ is independent of $\epsilon$, we obtain uniform a
priori estimates in $H^2(\Tt^d)$ for the solutions \(u_\epsi\), from which \eqref{wcuktou} holds by the Rellich--Kondrachov theorem. 
\end{proof}

\begin{pro}
\label{vari}
The operator $F_\epsilon$ given by \eqref{eq:Flin} satifies  condition \eqref{ss3-1}.
\end{pro}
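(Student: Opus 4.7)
The goal is to exhibit $u_\epsilon\in H^4(\Tt^d)$ with $\big(F_\epsilon(u_\epsilon), u_\epsilon-v\big)\leq 0$ for every $v\in H^4(\Tt^d)$. The simplest way to achieve this is to produce $u_\epsilon$ that actually solves $F_\epsilon(u_\epsilon)=0$ in $L^2(\Tt^d)$, since then the inner product vanishes identically (and the a priori estimates in the previous proposition, which assume exactly this, become applicable). So my plan reduces to solving the fourth-order linear PDE
\begin{equation*}
\epsilon \Delta^2 u_\epsilon + (1+\epsilon) u_\epsilon - \mathbf{b}\cdot Du_\epsilon = f \quad \text{in } \Tt^d.
\end{equation*}

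The strategy is Lax--Milgram on $H^2(\Tt^d)$. Define the bilinear form
\begin{equation*}
B(u,\phi)\colonequals \int_{\Tt^d}\Big(\epsilon \Delta u\,\Delta\phi + (1+\epsilon)u\phi - (\mathbf{b}\cdot Du)\phi\Big)\dx,\qquad L(\phi)\colonequals \int_{\Tt^d} f\phi\,\dx.
\end{equation*}
Continuity of $B$ on $H^2(\Tt^d)\times H^2(\Tt^d)$ is immediate from $\mathbf{b}\in C^1(\Tt^d;\Rr^d)$ and the Cauchy--Schwarz inequality, and $L$ is continuous since $f\in L^2(\Tt^d)$. For coercivity, taking $\phi=u$ and using the same integration-by-parts trick as in Proposition~\ref{prop:Ffmon} (together with $\div \mathbf{b}=0$) makes the convective term vanish, giving
\begin{equation*}
B(u,u)= \epsilon\|\Delta u\|_{L^2(\Tt^d)}^2 + (1+\epsilon)\|u\|_{L^2(\Tt^d)}^2.
\end{equation*}
On the torus, the norm $\big(\|u\|_{L^2}^2+\|\Delta u\|_{L^2}^2\big)^{1/2}$ is equivalent to $\|u\|_{H^2}$ (by Fourier series, using that the Laplacian is self-adjoint), so $B(u,u)\geq c_\epsilon \|u\|_{H^2(\Tt^d)}^2$ for some $c_\epsilon>0$. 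Lax--Milgram then produces a unique $u_\epsilon\in H^2(\Tt^d)$ with $B(u_\epsilon,\phi)=L(\phi)$ for all $\phi\in H^2(\Tt^d)$.

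Next I need to upgrade $u_\epsilon$ to $H^4(\Tt^d)$ so that $F_\epsilon(u_\epsilon)$ makes pointwise sense and the equation holds in $L^2$, not only in the weak dual sense. Rewrite the equation as $\Delta^2 u_\epsilon = \tfrac{1}{\epsilon}\big(f-(1+\epsilon)u_\epsilon+\mathbf{b}\cdot Du_\epsilon\big)$. Since $u_\epsilon\in H^2(\Tt^d)$ and $\mathbf{b}\in C^1(\Tt^d)$, the right-hand side lies in $L^2(\Tt^d)$, so standard elliptic regularity for the biharmonic operator on $\Tt^d$ (easily seen via Fourier multipliers) yields $u_\epsilon\in H^4(\Tt^d)$. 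Hence $F_\epsilon(u_\epsilon)=0$ holds in $L^2(\Tt^d)$, and in particular $\big(F_\epsilon(u_\epsilon),u_\epsilon-v\big)=0\leq 0$ for every $v\in H^4(\Tt^d)$, which is \eqref{ss3-1}.

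The only delicate points are the coercivity (which hinges crucially on $\div \mathbf{b}=0$ to kill the non-symmetric first-order term and on the $\Tt^d$-specific norm equivalence $\|u\|_{L^2}+\|\Delta u\|_{L^2}\sim \|u\|_{H^2}$) and the regularity bootstrap from $H^2$ to $H^4$; the latter I expect to carry out directly via the Fourier representation on the torus rather than invoking general elliptic theory.
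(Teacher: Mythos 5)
Your proof is correct, and it takes a genuinely different route from all three approaches in the paper. The paper freezes the lower-order part $F(u_0)=u_0-\mathbf{b}\cdot Du_0-f$ at a fixed $u_0$, solves only for the biharmonic regularizing part (either by minimizing a quadratic functional or by applying Lax--Milgram to the \emph{symmetric} form $\int\epsilon(uv+\Delta u\,\Delta v)$), and then recovers the full problem through a Schaefer fixed-point argument (Subsections~\ref{vpa4-1}--\ref{bfa4-2}) or through a continuation in $\lambda$ (Subsection~3.1.3). You instead absorb the skew-symmetric transport term $-\mathbf{b}\cdot Du$ into the bilinear form and apply Lax--Milgram \emph{once}, using that $\div\mathbf{b}=0$ kills that term in $B(u,u)$ and that $\|u\|_{L^2}+\|\Delta u\|_{L^2}\sim\|u\|_{H^2}$ on $\Tt^d$; since Lax--Milgram does not require symmetry, this solves the problem directly with no fixed point or continuation. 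For this linear model this is strictly simpler and is essentially the single Lax--Milgram step that appears buried inside the paper's openness argument. What the paper's heavier machinery buys is not needed here but is what makes the method transferable: in the MFG systems of Sections~\ref{rpMFG}--\ref{t-dep5} the operator is nonlinear, a direct Lax--Milgram no longer applies, and the freeze-and-iterate (or freeze-and-continue) structure is exactly what survives. One small remark: you could also observe that biharmonic regularity on $\Tt^d$ is immediate from the Fourier-side description, since the multiplier $(1+\epsilon|\xi|^4)^{-1}$ maps $L^2$-data to $H^4$; you allude to this and it is the cleanest way to close the bootstrap.
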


In the subsequent sections, we describe three approaches to prove the preceding propositon
that establishes the existence of solutions to the regularized problem. These are 
a variational approach, a bilinear
form approach, and a continuity method approach.

%Variational problem
\subsubsection{Variational problem approach}\label{vpa4-1}
Given $u\in H^{1}(\Tt^d)$,
we define a functional, $G_{u}\colon H^{2}(\Tt^d) \to \Rr$,
by
\begin{equation}\label{defGu}
G_{u}(v)=\int_{\Tt^d} \Big(\frac{\epsilon}{2} (v^2 + (\Delta
v )^2) + F(u) v\Big)\,\dx, \quad v\in H^{2}(\Tt^d),
\end{equation}
where \(F\) is given by \eqref{eq:egF}.

\begin{pro}\label{prop35}
Fix $u_0\in H^{1}(\Tt^d)$ and  set $G= G_{u_0}$.
The variational problem of finding \(\bar v
\in H^{2}(\Tt^d)\) satisfying 
\begin{equation}\label{var4-1} 
G(\bar v) = \inf_{v \in H^{2}(\Tt^d)} G(v)
\end{equation}
has a unique minimizer. 
\end{pro}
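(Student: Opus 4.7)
The plan is to apply the direct method of the calculus of variations. The functional $G$ is quadratic with a strictly positive quadratic form and a linear perturbation, so existence and uniqueness should follow from coercivity, convexity, and weak lower semicontinuity in $H^2(\Tt^d)$.

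First, I would establish that the map
\[
v\mapsto \|v\|^2_{L^2(\Tt^d)} + \|\Delta v\|^2_{L^2(\Tt^d)}
\]
is equivalent to $\|v\|^2_{H^2(\Tt^d)}$. On the torus this is immediate from Fourier series: if $v=\sum_k \hat v(k)\,e^{ik\cdot x}$, then this quantity equals $\sum_k (1+|k|^4)|\hat v(k)|^2$, which is comparable to $\sum_k (1+|k|^2)^2|\hat v(k)|^2 = \|v\|^2_{H^2(\Tt^d)}$. Consequently, the quadratic part of $G$ controls $\|v\|^2_{H^2(\Tt^d)}$ up to a multiplicative constant depending on $\epsilon$.

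Next, I would show coercivity of $G$. Using Cauchy--Schwarz on the linear term,
\[
\Big|\int_{\Tt^d} F(u_0)\,v\,\dx\Big|\leq \|F(u_0)\|_{L^2(\Tt^d)}\|v\|_{L^2(\Tt^d)},
\]
and the bound $\|F(u_0)\|_{L^2(\Tt^d)}<\infty$, which follows from $u_0\in H^1(\Tt^d)$ and the formula \eqref{eq:egF} for $F$. Combining this with the equivalence established above and Young's inequality yields
\[
G(v)\geq c_\epsilon \|v\|^2_{H^2(\Tt^d)} - C_\epsilon
\]
for some constants $c_\epsilon>0$ and $C_\epsilon\in\Rr$, so $G$ is coercive and bounded from below on $H^2(\Tt^d)$.

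With coercivity in hand, the direct method gives existence: take a minimizing sequence $\{v_n\}\subset H^2(\Tt^d)$; coercivity bounds it in $H^2(\Tt^d)$; pass to a weakly convergent subsequence $v_n\rightharpoonup \bar v$ in $H^2(\Tt^d)$. The quadratic part of $G$ is weakly lower semicontinuous (it is the square of a Hilbert-space seminorm), and the linear part is weakly continuous (since $F(u_0)\in L^2(\Tt^d)$ and $v_n\rightharpoonup \bar v$ in $L^2(\Tt^d)$). Therefore $G(\bar v)\leq \liminf_n G(v_n)=\inf G$, so $\bar v$ is a minimizer. Uniqueness follows from the strict convexity of $G$: the quadratic form $v\mapsto \int_{\Tt^d} (v^2+(\Delta v)^2)\,\dx$ is strictly convex (vanishing only at $v\equiv 0$), and adding a linear term preserves strict convexity.

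The only mildly delicate point is confirming the norm equivalence $\|v\|^2_{L^2}+\|\Delta v\|^2_{L^2}\sim \|v\|^2_{H^2(\Tt^d)}$, which is what makes coercivity work on the full $H^2(\Tt^d)$ space; this is handled cleanly by Fourier series on the torus, as indicated above.
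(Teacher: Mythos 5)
Your proof is correct and follows the same overall strategy as the paper's: the direct method in the calculus of variations, with coercivity giving existence and strict convexity giving uniqueness. The one place you diverge is in establishing that $\|v\|_{L^2}^2 + \|\Delta v\|_{L^2}^2$ controls the full $H^2(\Tt^d)$ norm: the paper invokes the Gagliardo--Nirenberg interpolation inequality to bound the intermediate first- and second-order derivatives, whereas you obtain the norm equivalence directly from Fourier series on the torus, via the elementary inequality $1+|k|^4 \leq (1+|k|^2)^2 \leq 2(1+|k|^4)$. On $\Tt^d$ the Fourier route is arguably cleaner and more self-contained (no appeal to a named interpolation theorem), while the Gagliardo--Nirenberg route has the advantage of transferring unchanged to domains where Fourier analysis is unavailable, which matters for the boundary-value analogues discussed elsewhere in this line of work. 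Either is fine here.
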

\begin{proof}
The functional $G$ is weakly lower-semicontinuous in $H^2(\Tt^d)$. To apply the direct 
method in the calculus of variations, it suffices to show that it is coercive. 
This follows from the inequality
\begin{align*}
        G(v)
        &=
        \int_{\Tt^d} \Big(\frac{\epsilon}{2} (v^2 + (\Delta
v )^2) + F(u_0) v\Big)\,\dx
        \geq
        \int_{\Tt^d} \Big(\frac{\epsilon}{2} (v^2 + (\Delta
v )^2) - C |v|\Big)\,\dx\\
        &\geq
        \int_{\Tt^d} \Big(\frac{\epsilon}{2} (v^2 + (\Delta
v )^2) -\frac{\epsilon}{4}v^2 - \frac{4C}{\epsilon}\Big)\,\dx
        \geq
        C_\epsilon \|v\|_{H^{2}(\Tt^d)}^2 - \frac{C}{\epsilon},
\end{align*}
which is a consequence of Young's and the Gagliardo--Nirenberg interpolation
inequalities. Because $G$ is strictly convex, the minimizer is unique. 
\end{proof}

The preceding proposition enable us to define an operator  $\Uu \colon
H^{1}(\Tt^d)\to H^{1}(\Tt^d)$ by
\begin{equation}
        \label{udef}
        \Uu[u_0] = \bar v,
\end{equation}
where 
 \(u_0\in H^{1}(\Tt^d)\) and
$\bar v$ is the solution of \eqref{var4-1} given by Proposition~\ref{prop35}. 
To prove Proposition \ref{vari}, 
we aim at showing that $\Uu$ has a fixed point. 
Note that the range of $\Uu$ is $H^2(\Tt^d)$, hence any fixed point of $\Uu$ is
automatically in $H^2(\Tt^d)$.
Once the existence of a fixed point is established, it satisfies \eqref{ss3-1}
by the following proposition. 

\begin{pro}
        \label{36}
Let $u\in H^2(\Tt^d)$ be a fixed point of $\Uu$. Then,  
\begin{equation}\label{vari4-1}
        \int_{\Tt^d}\Big(\epsilon\big(u (\omega-u) + \Delta
        u\Delta (\omega-u)\big) + F(u)(\omega-u)\Big)\,\dx\geq 0
\end{equation}
for all $\omega \in H^{2}(\Tt^d)$.
\end{pro}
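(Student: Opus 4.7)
The plan is to exploit the fact that a fixed point of $\Uu$ is an unconstrained minimizer of the quadratic functional $G_u$, so \eqref{vari4-1} will follow immediately from the first-order optimality condition.

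First, I would observe that since $u\in H^2(\Tt^d)$ satisfies $\Uu[u]=u$, the definition of $\Uu$ in \eqref{udef} combined with Proposition~\ref{prop35} ensures that $u$ is the unique minimizer over $H^2(\Tt^d)$ of the functional $G_u$ given by \eqref{defGu} (with $u_0=u$). Then, fixing an arbitrary test function $\omega\in H^2(\Tt^d)$ and using that $H^2(\Tt^d)$ is a vector space, I would consider the one-parameter family of admissible competitors $v_t := u + t(\omega - u)\in H^2(\Tt^d)$, $t\in\Rr$.

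Next, because $G_u$ is purely quadratic in its argument, a direct algebraic expansion (no integration by parts is needed) gives
\begin{equation*}
G_u(v_t) - G_u(u) = t\int_{\Tt^d}\Big(\epsilon\big(u(\omega-u) + \Delta u\,\Delta(\omega-u)\big) + F(u)(\omega-u)\Big)\,\dx + \frac{t^2}{2}\int_{\Tt^d}\epsilon\big((\omega-u)^2 + (\Delta(\omega-u))^2\big)\,\dx.
\end{equation*}
Since $u$ is a minimizer, the left-hand side is nonnegative for every $t\in\Rr$; dividing by $t>0$ and sending $t\to 0^+$ kills the quadratic term and delivers exactly \eqref{vari4-1}. (Running the same step with $t<0$ produces the reverse inequality, so \eqref{vari4-1} is in fact an equality; only the stated one-sided form is required for Minty's method.)

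I do not expect any real obstacle: the argument is merely the standard first-order condition for an unconstrained minimizer of a Hilbert-space quadratic functional, and the only verification needed is the algebraic expansion displayed above.
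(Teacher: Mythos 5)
Your proposal is correct and takes essentially the same route as the paper: both proofs construct the competitor family $v_t = u + t(\omega-u)$, exploit the fact that the fixed point $u$ minimizes $G_u$, and pass to the limit $t\to 0^+$ in the difference quotient. The only cosmetic difference is that you expand the quadratic explicitly rather than writing $\Oo(t)$, and your observation that $t<0$ yields equality matches what the paper records separately in Remark~\ref{remel}.
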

\begin{proof}
Fix $\omega \in H^{2}(\Tt^d)$ and let $I\colon [0,1] \to \Rr$ be given by 
\begin{equation*}
I(t)= G(u +t (\omega-u)), \quad t\in[0,1].
\end{equation*}
Because $u$ is a minimizer, we have $I(0)\leq I(t)$. Consequently,
\begin{equation*}%\label{vari4-0}
0\leq \frac{1}{t}\big(I(t) - I(0)\big)
=
\int_{\Tt^d}\Big(\epsilon\big(u (\omega-u) + \Delta
u\Delta (\omega-u)\big) + F(u)(\omega-u)\Big)\,\dx
+ \Oo(t)
\end{equation*}
for all $t\in (0,1]$. Letting  $t\to 0$ in the preceding inequality,
 we conclude that \(u\) satisfies \eqref{vari4-1}.
\end{proof} 
\begin{remark}
\label{remel}
Arguing as in the previous proposition for $\bar v=\Uu(u_0)$, 
we obtain that 
\[
\int_{\Tt^d}\Big(\epsilon\big(\bar v (\omega-\bar v) + \Delta
\bar v\Delta (\omega-\bar v)\big) + F(u_0)(\omega-\bar v)\Big)\,\dx\geq 0. 
\]      
Further, by taking $t\in [-1,0]$ in the preceding proof gives
\[
\int_{\Tt^d}\Big(\epsilon\big(\bar v (\omega-\bar v) + \Delta
\bar v\Delta (\omega-\bar v)\big) + F(u_0)(\omega-\bar v)\Big)\,\dx= 0, 
\]      
which is the weak form of the Euler--Lagrange equation for the functional $G$.
In particular, any fixed point of $\Uu$ in $H^2(\Tt^d)$ is a weak solution
of
\[
\epsilon(u+\Delta^2 u)+F(u)=0.
\]
Accordingly,  $\Delta^2 u\in L^2(\Tt^d)$, which gives $u\in H^4(\Tt^d)$.
\end{remark}

To prove that \(\Uu\) has a fixed point, we use 
the following version of Schaefer's fixed-point theorem (see,
for instance,
 \cite[Theorem~6.2]{FGT1} for its proof).

\begin{theorem}\label{Sch4-1}
Let $\mathcal{X}$ be a closed convex subset of a Banach space
such that  $0\in \mathcal{X}$. Assume that $\Uu\colon \mathcal{X}
\to \mathcal{X}$ is continuous and compact. Also, assume that
the set
\begin{equation*}
\big\{
w\in \mathcal{X}\ |\ w=\lambda\, \Uu[w]\ \ \mbox{for some }\lambda
\in [0,1]
\big\}
\end{equation*}
is bounded. Then, there exists a fixed point, $w\in \mathcal{X}$,
such that $w=\Uu[w]$.
\end{theorem}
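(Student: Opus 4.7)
The plan is to produce the required $u_\epsi\in D=H^4(\Tt^d)$ as a fixed point of the operator $\Uu\colon H^1(\Tt^d)\to H^1(\Tt^d)$ defined in \eqref{udef}. By Remark~\ref{remel}, any such fixed point automatically lies in $H^4(\Tt^d)$ and satisfies $\epsi(u+\Delta^2 u)+F(u)=0$, i.e., $F_\epsi(u)=0$. In particular, $(F_\epsi(u),u-v)=0\leq 0$ for every $v\in D$, so \eqref{ss3-1} holds. The existence of such a fixed point will be obtained via Theorem~\ref{Sch4-1} applied to $\Uu$ on $\Xx=H^1(\Tt^d)$ (which is a closed convex Banach space containing $0$, and $\Uu$ maps into $H^2(\Tt^d)\subset H^1(\Tt^d)$ by Proposition~\ref{prop35}).

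Verifying the hypotheses of Theorem~\ref{Sch4-1} requires three checks. For continuity, suppose $u_0^n\to u_0$ in $H^1(\Tt^d)$; then $F(u_0^n)\to F(u_0)$ strongly in $L^2(\Tt^d)$, and comparing $G_{u_0^n}(\bar v_n)\leq G_{u_0^n}(0)=0$ with the coercivity estimate of Proposition~\ref{prop35} yields a uniform $H^2$-bound on $\bar v_n=\Uu[u_0^n]$ (the constant $C$ there depending polynomially on $\|u_0^n\|_{H^1}$). Extracting a weakly $H^2$-convergent subsequence (strong in $H^1$ by Rellich--Kondrachov) and passing to the limit in the weak Euler--Lagrange equation of Remark~\ref{remel} identifies the limit as $\Uu[u_0]$ by uniqueness of the minimizer, so the whole sequence converges. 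For compactness, the same coercivity bound shows that $\Uu$ sends $H^1$-bounded sets into $H^2$-bounded sets, which are precompact in $H^1$ by Rellich--Kondrachov.

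For the a priori bound, let $w\in H^1(\Tt^d)$ satisfy $w=\lambda\,\Uu[w]$ for some $\lambda\in[0,1]$. If $\lambda=0$, then $w=0$. If $\lambda>0$, set $\bar v=\Uu[w]=w/\lambda$; by Remark~\ref{remel}, $\bar v$ satisfies
\begin{equation*}
\int_{\Tt^d}\epsi\bigl(\bar v\,\omega+\Delta\bar v\,\Delta\omega\bigr)\,\dx+\int_{\Tt^d}F(w)\,\omega\,\dx=0
\end{equation*}
for all $\omega\in H^2(\Tt^d)$. Multiplying by $\lambda$ and choosing $\omega=w\in H^2(\Tt^d)$ gives
\begin{equation*}
\int_{\Tt^d}\epsi\bigl(w^2+(\Delta w)^2\bigr)\,\dx+\lambda\int_{\Tt^d}\bigl(w^2-w\,{\bf b}\cdot Dw-fw\bigr)\,\dx=0.
\end{equation*}
The divergence-free condition on ${\bf b}$ kills the transport term exactly as in the proof of Proposition~\ref{prop:Ffmon}, and Young's inequality absorbs the $fw$ term into $\epsi w^2$, yielding a bound on $\|w\|_{H^2(\Tt^d)}$ (hence $\|w\|_{H^1(\Tt^d)}$) that is independent of $\lambda$.

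The main obstacle is the a priori estimate in the last step: a naive test function would leave the first-order transport term uncontrolled, and one must use precisely the integration-by-parts identity underlying the monotonicity of $F$ to cancel it. The hyperviscous fourth-order regularization $\epsi\Delta^2$ in \eqref{eq:Flin} is then what turns the resulting $L^2$-bound into the $H^2$-bound needed to close the Schaefer argument.
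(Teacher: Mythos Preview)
Your proposal does not prove Theorem~\ref{Sch4-1}. The statement labeled Theorem~\ref{Sch4-1} is Schaefer's fixed-point theorem itself: an abstract result asserting that any continuous compact map $\Uu\colon\mathcal{X}\to\mathcal{X}$ on a closed convex set $\mathcal{X}\ni 0$ in a Banach space has a fixed point, provided the set $\{w:w=\lambda\,\Uu[w]\text{ for some }\lambda\in[0,1]\}$ is bounded. What you have written is instead a proof of Proposition~\ref{vari} \emph{by applying} Theorem~\ref{Sch4-1}. Using Theorem~\ref{Sch4-1} to establish Theorem~\ref{Sch4-1} is circular.

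Note that the paper itself does not prove Theorem~\ref{Sch4-1}; it quotes the result and refers to \cite[Theorem~6.2]{FGT1} for a proof. A genuine proof would proceed via Schauder's fixed-point theorem applied to a suitable truncation of $\Uu$ (for instance, the retraction $\Uu_R$ of $\Uu$ onto a large ball of radius $R$ chosen from the a priori bound), or via a Leray--Schauder degree argument. Your argument, read as a proof of Proposition~\ref{vari}, is essentially the same as the paper's own route through Propositions~\ref{39} and~\ref{p310} and Remark~\ref{remel}; but that is a different statement from the one you were asked to prove.
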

To apply the preceding theorem, we first  prove the continuity and compactness of $\Uu$. 
\begin{pro}
\label{39}
The operator  $\Uu$ in \eqref{udef} is 
continuous and compact in $H^{1}(\Tt^d)$.
\end{pro}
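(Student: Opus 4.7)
The plan is to obtain both continuity and compactness from a single a priori $H^2$-estimate for $\bar v = \Uu[u_0]$, derived from the weak Euler--Lagrange equation identified in Remark~\ref{remel}.

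First I would record the Euler--Lagrange equation. By Remark~\ref{remel}, for every $u_0 \in H^1(\Tt^d)$, $\bar v = \Uu[u_0] \in H^4(\Tt^d)$ solves
\begin{equation*}
\epsilon(\bar v + \Delta^2 \bar v) + F(u_0) = 0 \quad \text{in } \Tt^d
\end{equation*}
in the weak sense. Testing this equation against $\bar v$, integrating by parts, and applying Young's inequality to $-\int_{\Tt^d} F(u_0) \bar v \,\dx$ yields
\begin{equation*}
\frac{\epsilon}{2}\int_{\Tt^d} \big(\bar v^2 + (\Delta \bar v)^2\big)\,\dx \leq \frac{1}{2\epsilon}\|F(u_0)\|_{L^2(\Tt^d)}^2.
\end{equation*}
Since $\|F(u_0)\|_{L^2(\Tt^d)} \leq \|u_0\|_{L^2(\Tt^d)} + \|{\bf b}\|_{L^\infty}\|Du_0\|_{L^2(\Tt^d)} + \|f\|_{L^2(\Tt^d)}$, this controls $\|\bar v\|_{L^2} + \|\Delta \bar v\|_{L^2}$ by a constant depending on $\epsilon$, $\|{\bf b}\|_{L^\infty}$, $\|f\|_{L^2}$, and $\|u_0\|_{H^1}$. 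The Gagliardo--Nirenberg interpolation inequality then upgrades this to
\begin{equation*}
\|\Uu[u_0]\|_{H^2(\Tt^d)} \leq C_\epsilon\big(1 + \|u_0\|_{H^1(\Tt^d)}\big).
\end{equation*}

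Compactness follows immediately: if $\{u_0^k\}$ is bounded in $H^1(\Tt^d)$, then $\{\Uu[u_0^k]\}$ is bounded in $H^2(\Tt^d)$, and the Rellich--Kondrachov theorem delivers a subsequence converging strongly in $H^1(\Tt^d)$.

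For continuity, let $u_0^k \to u_0$ strongly in $H^1(\Tt^d)$, and set $\bar v_k = \Uu[u_0^k]$. By the estimate above, $\{\bar v_k\}$ is bounded in $H^2(\Tt^d)$, so a subsequence satisfies $\bar v_k \rightharpoonup \tilde v$ weakly in $H^2(\Tt^d)$ and strongly in $H^1(\Tt^d)$ for some $\tilde v \in H^2(\Tt^d)$. Since $F(u_0^k) \to F(u_0)$ strongly in $L^2(\Tt^d)$ (the map $u \mapsto F(u)$ being continuous from $H^1$ to $L^2$) and the Euler--Lagrange equation is linear, we can pass to the limit in the weak form
\begin{equation*}
\int_{\Tt^d}\Big(\epsilon\big(\bar v_k \varphi + \Delta \bar v_k\,\Delta \varphi\big) + F(u_0^k)\varphi\Big)\,\dx = 0 \quad \forall \varphi \in H^2(\Tt^d)
\end{equation*}
to conclude that $\tilde v$ solves the Euler--Lagrange equation for the functional $G_{u_0}$. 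By the uniqueness of the minimizer (Proposition~\ref{prop35}), $\tilde v = \Uu[u_0]$. Because the limit is the same along every subsequence, the full sequence $\bar v_k$ converges to $\Uu[u_0]$ strongly in $H^1(\Tt^d)$, proving continuity. The main technical point is the a priori $H^2$-bound; everything else is a standard weak-compactness argument enabled by the linearity of the Euler--Lagrange equation.
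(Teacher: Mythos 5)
Your proof is correct but takes a genuinely different route from the paper's, especially for the continuity part. The paper proves continuity by directly comparing the minimum values of the functionals $G_{u}$ and $G_{u_n}$: since $\bar v$ and $\bar v_n$ are minimizers and the quadratic parts of the two functionals coincide, the midpoint comparison
\[
G_{u}(\bar v) + G_{u_n}(\bar v_n)\leq G_{u}\Big(\tfrac{\bar v + \bar v_n}{2}\Big) + G_{u_n}\Big(\tfrac{\bar v + \bar v_n}{2}\Big)
\]
combined with Young's inequality yields the quantitative (Lipschitz-type) bound
\[
\frac{\epsilon}{8}\int_{\Tt^d}\big[(\bar v - \bar v_n)^2 + (\Delta\bar v - \Delta\bar v_n)^2\big]\,\dx \leq \frac{C}{\epsilon}\|u - u_n\|_{H^1(\Tt^d)}^2,
\]
which then does double duty: it gives continuity of $\Uu$ from $H^1$ to $H^2$ directly, and, by specializing to $u=0$, gives the $H^2$ a priori bound needed for compactness. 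Your proof instead first derives the a priori $H^2$-bound by testing the Euler--Lagrange equation from Remark~\ref{remel} against $\bar v$ itself, and then establishes continuity qualitatively: extract a weakly $H^2$-convergent and strongly $H^1$-convergent subsequence, pass to the limit in the linear Euler--Lagrange equation using $F(u_0^k)\to F(u_0)$ in $L^2$, identify the limit with $\Uu[u_0]$ via strict convexity and uniqueness of the minimizer (Proposition~\ref{prop35}), and upgrade to convergence of the whole sequence. Both arguments are sound; the paper's is more self-contained (it never needs the Euler--Lagrange equation or $H^4$-regularity) and yields the stronger, quantitative conclusion that $\Uu$ is Lipschitz from $H^1$ to $H^2$, whereas yours has the conceptual economy of deducing both continuity and compactness from a single a priori estimate, at the price of a weak-compactness--plus--uniqueness argument that provides continuity but no explicit modulus.
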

\begin{proof}
First, we establish continuity.
Let $\{u_n\}_{n=1}^\infty \subset  H^{1}(\Tt^d)$ and $u
\in H^{1}(\Tt^d)$  be such that $u_n\to u$ in \( H^{1}(\Tt^d)\).
Recalling \eqref{defGu}, set $G^\ast = G_{u}$ and
$G_n= G_{u_n}$.
Let
\begin{equation*}
\bar v = \Uu[u], \ \ \bar v_n =\Uu[u_n];
\end{equation*}
that is, $\bar v$ and $\bar v_n$ solve \eqref{var4-1}
with \(G\) replaced by $G^\ast$ and $G_n$, respectively.
Because $\bar v$ and $\bar v_n$ are minimizers over \(H^{2}(\Tt^d)\)
 of $G^\ast$ and $G_n$, respectively, we have
\begin{align*}
G^\ast (\bar v) + G_n(\bar v_n)
&\leq 
G^\ast\Big(\frac{\bar v + \bar v_n}{2}\Big) + G_n\Big(\frac{\bar
v + \bar v_n}{2}\Big)\\
&=
\frac{1}{2} G^\ast (\bar v) + \frac{1}{2} G^\ast (\bar v_n) 
+ \frac{1}{2} G_n(\bar v)  + \frac{1}{2} G_n(\bar v_n)
-
\int_{\Tt^d} \frac{\epsilon}{4}\Big[(\bar v - \bar v_n)^2 + (\Delta\bar
v - \Delta\bar v_n)^2\Big]\,\dx,
\end{align*}
which can be written as
\begin{align*}
\int_{\Tt^d} \frac{\epsilon}{4}\Big[(\bar v - \bar v_n)^2 + (\Delta\bar
v - \Delta\bar v_n)^2\Big]\,\dx
\leq 
\frac{1}{2} \big(
- G^\ast (\bar v) + G^\ast (\bar v_n) 
+ G_n(\bar v)  - G_n(\bar v_n)\big
).
\end{align*}
Using Young's inequality, we obtain that
\begin{align}
        \notag
\int_{\Tt^d} \frac{\epsilon}{4}\Big[(\bar v - \bar v_n)^2 + (\Delta\bar
v - \Delta\bar v_n)^2\Big]\,\dx
&\leq
\int_{\Tt^d}|\bar v - \bar v_n||F(u)-F(u_n)|\,\dx\notag\\
&\leq
\int_{\Tt^d}\frac{\epsilon}{8}(\bar v - \bar v_n)^2 \,\dx
+\frac{C}{\epsilon}
\|u - u_n\|^2_{H^1(\Tt^d)}.
\label{est}
\end{align}
 Hence, using the
Gagliardo--Nirenberg interpolation inequality and the convergence
 $u_n \to u$ in $H^{1}(\Tt^d)$, we conclude from the previous
estimate that $\bar v_n$  converges to $\bar v$ in $H^{2}(\Tt^d)$.
This proves that \(\Uu\) is continuous over \(H^{1}(\Tt^d)\).

Next, we establish compactness. For that, let $u_n$ be a bounded
sequence in $H^1(\Tt^d)$. We claim that the corresponding sequence $\bar v_n=\Uu(u_n)$ is pre-compact. 
By choosing $u=0$ and $\bar v=\Uu(0)$, inequality \eqref{est} combined with Gagliardo--Nirenberg estimate  gives that $\bar v_n$ is bounded
in $H^2(\Tt^d)$, hence pre-compact in $H^1(\Tt^d)$ by the Rellich--Kondrachov
theorem.
\end{proof}

\begin{pro}
\label{p310}
The operator  $\Uu$ in \eqref{udef} has a fixed point $\bar u\in H^1(\Tt^d)\cap  H^2(\Tt^d)$.
\end{pro}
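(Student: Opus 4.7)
The plan is to apply Schaefer's fixed-point theorem (Theorem~\ref{Sch4-1}) with $\mathcal{X}=H^1(\Tt^d)$ and the operator $\Uu$ of \eqref{udef}. The set $\mathcal{X}$ is closed and convex, contains $0$, and Proposition~\ref{39} already supplies the continuity and compactness of $\Uu:H^1(\Tt^d)\to H^1(\Tt^d)$ (note that $\Uu$ takes values in $H^2(\Tt^d)\subset H^1(\Tt^d)$). Thus, the only remaining hypothesis of Theorem~\ref{Sch4-1} to verify is the boundedness in $H^1(\Tt^d)$ of the Leray--Schauder set
\[
S := \big\{ w\in H^1(\Tt^d) : w=\lambda\,\Uu[w]\text{ for some }\lambda\in[0,1]\big\}.
\]
Once this is done, Schaefer's theorem produces $\bar u\in H^1(\Tt^d)$ with $\bar u=\Uu[\bar u]$, and since the range of $\Uu$ lies in $H^2(\Tt^d)$, one automatically has $\bar u\in H^2(\Tt^d)$.

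To bound $S$, fix $w\in S$. If $\lambda=0$, then $w=0$, so assume $\lambda\in(0,1]$ and set $\bar v=\Uu[w]$, so that $\bar v=w/\lambda$. By Remark~\ref{remel}, $\bar v$ satisfies the Euler--Lagrange identity
\[
\int_{\Tt^d}\Big(\epsilon\big(\bar v(\omega-\bar v)+\Delta\bar v\,\Delta(\omega-\bar v)\big)+F(w)(\omega-\bar v)\Big)\,\dx=0
\]
for every $\omega\in H^2(\Tt^d)$. Choosing $\omega=0$, substituting $\bar v=w/\lambda$, and multiplying through by $\lambda^2$ yields
\[
\epsilon\int_{\Tt^d}\big(w^2+(\Delta w)^2\big)\,\dx=-\lambda\int_{\Tt^d}F(w)\,w\,\dx.
\]

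The right-hand side is handled by the same integration by parts used in Proposition~\ref{prop:Ffmon}: the condition $\div({\bf b})=0$ kills the transport term, leaving $\int_{\Tt^d} F(w)\,w\,\dx=\int_{\Tt^d}(w^2-fw)\,\dx$. Absorbing the nonnegative contribution $\lambda\int_{\Tt^d}w^2\,\dx$ onto the left and applying Young's inequality to $\lambda\int_{\Tt^d}fw\,\dx$ yields an estimate of the form
\[
\epsilon\big(\|w\|_{L^2(\Tt^d)}^2+\|\Delta w\|_{L^2(\Tt^d)}^2\big)\leq C_{\epsilon,f},
\]
with $C_{\epsilon,f}$ independent of $\lambda$. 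On the torus this controls the full $H^2$-norm of $w$, hence its $H^1$-norm, so $S$ is bounded in $H^1(\Tt^d)$ and Schaefer's theorem applies. The only delicate point of the argument is the $\lambda$-scaling in the displayed identity above: it is precisely the biharmonic regularization in \eqref{eq:Flin} that, after multiplying by $\lambda^2$, yields a bound uniform in $\lambda\in[0,1]$, and this uniformity is what makes Theorem~\ref{Sch4-1} applicable here.
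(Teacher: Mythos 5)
Your proposal is correct and follows essentially the same route as the paper: apply Schaefer's fixed-point theorem via Theorem~\ref{Sch4-1}, use Remark~\ref{remel} to obtain the Euler--Lagrange identity for $\Uu[w]$, substitute $w=\lambda\,\Uu[w]$ and test with $\omega=0$, integrate by parts using $\div(\mathbf{b})=0$, and apply Young's inequality together with Gagliardo--Nirenberg to bound the Leray--Schauder set in $H^2(\Tt^d)$ uniformly in $\lambda$. The only cosmetic difference is that the paper obtains a constant $C$ independent of both $\lambda$ and $\epsilon$ (by splitting $\lambda\int fw$ against $\lambda\int w^2$ rather than against the $\epsilon$-term), whereas your $C_{\epsilon,f}$ depends on $\epsilon$; either version suffices for the fixed-point argument.
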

\begin{proof}
To establish the existence of a fixed point, we apply Theorem \ref{Sch4-1}. For that,  
fix $\lambda \in [0,1]$ and assume that $u_\lambda$ satisfies
\begin{equation*}
u_\lambda=\lambda\, \Uu[u_\lambda].
\end{equation*}
If \(\lambda=0\), then \(u_\lambda=0\). 
If \(\lambda \in (0,1]\), then  arguing as in Remark \ref{remel}, 
we have 
\begin{equation*}%\label{eq4-1-2}
\int_{\Tt^d}\Big(\epsilon\big(u_\lambda (\lambda\omega-u_\lambda)
+ 
\Delta^k u_\lambda\Delta^k (\lambda \omega-u_\lambda)\big) 
+ 
\lambda F({u_\lambda})(\lambda \omega-u_\lambda)\Big)\,\dx=0
\enspace \text{
for all $\omega \in H^{2}(\Tt^d)$.}
\end{equation*}
Setting $\omega=0$,  integrating by parts with $\div({\bf b})=0$,
and using Young's inequality, we get
\begin{equation*}
\int_{\Tt^d}\Big( \lambda u_\lambda^2 + \epsilon\big( u_\lambda^2 + (\Delta
u_\lambda)^2\big)\Big)\,\dx\leq C,
\end{equation*}
where $C$ is independent of $\lambda$ and $\epsilon$.  Thus, using the
Gagliardo--Nirenberg interpolation inequality, 
$u_\lambda$ is bounded uniformly for $\lambda\in[0,1]$ in $H^2(\Tt^d)$ and hence in $H^1(\Tt^d)$. 
Therefore, by the compactness and continuity proven in Proposition~\ref{39}  combined with Theorem~\ref{Sch4-1}, we conclude that $\Uu$ has a fixed point
 $\bar u\in H^{1}(\Tt^d)\cap  H^2(\Tt^d)$. 
\end{proof}

\begin{proof}[Proof of Proposition \ref{vari}]
Proposition  \ref{vari} follows by combining the existence of a fixed point established in 
Proposition~\ref{p310}, its characterization in Proposition \ref{36}, and the $H^4$ regularity 
proven in Remark~\ref{remel}.
\end{proof}

%
%Bilinear form
\subsubsection{Bilinear form approach}\label{bfa4-2}

Now, we present an alternative proof of Proposition \ref{vari} using Lax--Milgram theorem 
applied to a suitable bilinear form. 

Fix $u_0 \in H^{1}(\Tt^d)$. 
Define a bilinear form $B\colon H^{2}(\Tt^d)\times H^{2}(\Tt^d) \to \Rr$ and a
linear functional
$F_{u_0}: H^{2}(\Tt^d)\to \Rr$ by
\begin{equation*}
B[u,v]=
\int_{\Tt^d}\epsilon\big(uv + \Delta u\cdot \Delta v\big)\,\dx
\enspace \text{ and } \enspace 
\langle F_{u_0}, v\rangle=
\int_{\Tt^d}F(u_0) v\,\dx
\end{equation*}
for \(u\), \(v\in H^{2}(\Tt^d)\). 

\begin{pro}
There exists 
$\bar u\in H^{2}(\Tt^d)$ such that  \begin{equation}\label{Bi4-1}
        B[\bar u,v]
        =
        -\langle F_{u_0}, v\rangle \enspace \text{ for all } 
        v\in H^{2}(\Tt^d).
\end{equation}
\end{pro}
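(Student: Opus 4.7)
The plan is to apply the Lax--Milgram theorem to the bilinear form $B$ on the Hilbert space $H^2(\Tt^d)$ with the linear functional $-\langle F_{u_0},\cdot\rangle$. I would verify the three hypotheses in turn: boundedness of the bilinear form, boundedness of the linear functional, and coercivity of the bilinear form.

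First, I would check that $F_{u_0}$ is a bounded linear functional on $H^2(\Tt^d)$. Since $u_0\in H^1(\Tt^d)$ and ${\bf b}\in C^1(\Tt^d;\Rr^d)$, $f\in C^1(\Tt^d)$, we have $F(u_0)=u_0-{\bf b}\cdot Du_0-f\in L^2(\Tt^d)$, so that by the Cauchy--Schwarz inequality
\[
|\langle F_{u_0},v\rangle|\leq \|F(u_0)\|_{L^2(\Tt^d)}\|v\|_{L^2(\Tt^d)}\leq \|F(u_0)\|_{L^2(\Tt^d)}\|v\|_{H^2(\Tt^d)}
\]
for all $v\in H^2(\Tt^d)$. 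Next, the boundedness of $B$ follows directly from the Cauchy--Schwarz inequality:
\[
|B[u,v]|\leq \epsilon\bigl(\|u\|_{L^2(\Tt^d)}\|v\|_{L^2(\Tt^d)}+\|\Delta u\|_{L^2(\Tt^d)}\|\Delta v\|_{L^2(\Tt^d)}\bigr)\leq \epsilon\,\|u\|_{H^2(\Tt^d)}\|v\|_{H^2(\Tt^d)}.
\]

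The step I expect to require most care is coercivity of $B$ on $H^2(\Tt^d)$. By definition,
\[
B[u,u]=\epsilon\bigl(\|u\|_{L^2(\Tt^d)}^2+\|\Delta u\|_{L^2(\Tt^d)}^2\bigr),
\]
so I would invoke the Gagliardo--Nirenberg interpolation inequality (already used in the proof of Proposition~\ref{prop35}) to control $\|Du\|_{L^2(\Tt^d)}^2$ by $\|u\|_{L^2(\Tt^d)}$ and $\|\Delta u\|_{L^2(\Tt^d)}$, and then conclude the existence of a constant $c_\epsilon>0$ with $B[u,u]\geq c_\epsilon\|u\|_{H^2(\Tt^d)}^2$ for every $u\in H^2(\Tt^d)$. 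Equivalently, one may express this via Fourier series on the torus, where $\|u\|_{L^2}^2+\|\Delta u\|_{L^2}^2=\sum_k (1+|k|^4)|\hat u(k)|^2$ is equivalent to the standard $H^2$-norm.

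Once these three properties are established, the Lax--Milgram theorem yields a unique $\bar u\in H^2(\Tt^d)$ satisfying \eqref{Bi4-1}, which is the desired conclusion. I would note in passing that this $\bar u$ coincides with the minimizer $\Uu[u_0]$ from Proposition~\ref{prop35} (by Remark~\ref{remel}, the identity \eqref{Bi4-1} is precisely the weak Euler--Lagrange equation for $G_{u_0}$), so this bilinear form approach provides an alternative route to the same object used in Subsection~\ref{vpa4-1}.
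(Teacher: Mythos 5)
Your proof is correct and follows essentially the same route as the paper: verify boundedness of $B$ via Cauchy--Schwarz, coercivity via Gagliardo--Nirenberg, boundedness of the functional via $F(u_0)\in L^2(\Tt^d)$, then invoke Lax--Milgram. The Fourier-series aside and the remark identifying $\bar u$ with $\Uu[u_0]$ are pleasant additions but do not change the substance.
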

\begin{proof}
By the Gagliardo--Nirenberg interpolation inequality, we can
find  a positive constant, $C_1$, such that the bilinear form
\(B\)
satisfies 
\begin{equation}
        \label{coerc}
B[u,u]\geq \epsilon C_1 \|u\|_{H^{2}(\Tt^d)}^2\enspace  \text{
for all } 
u\in H^{2}(\Tt^d).
\end{equation}
Moreover, using the Cauchy--Schwarz inequality, there exists
another positive constant, $C_2$, such that
\begin{equation}
        \label{bound}
|B[u,v]|\leq C_2\|u\|_{H^{2}(\Tt^d)}\|v\|_{H^{2}(\Tt^d)} \enspace
 \text{ for all } 
u, \,v\in H^{2}(\Tt^d).
\end{equation}
Because $u_0\in H^{1}(\Tt^d)$, $F(u_0)$ is a bounded in $L^2$. Thus,
$F_{u_0}$ is a bounded linear function in $H^2(\Tt^d)$. 
Therefore, by the Lax--Milgram theorem, there exists a unique
$\bar u\in H^{2}(\Tt^d)$ satisfying \eqref{Bi4-1}. 
\end{proof}

\begin{pro}
There exists \(u\in H^{4}(\Tt^d)\) solving
\begin{equation}\label{bfp}
        B[u,v]
        =
        -\langle F_u, v\rangle \enspace \text{ for all } 
        v\in H^{2}(\Tt^d).
\end{equation}
\end{pro}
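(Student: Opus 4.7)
The plan is to solve \eqref{bfp} by a Schaefer fixed-point argument, reusing the previous proposition to set up a solution operator. For each $u_0\in H^1(\Tt^d)$, that proposition provides a unique $\bar u\in H^2(\Tt^d)$ with $B[\bar u,v]=-\langle F_{u_0},v\rangle$ for all $v\in H^2(\Tt^d)$. This defines $\Uu\colon H^1(\Tt^d)\to H^2(\Tt^d)\hookrightarrow H^1(\Tt^d)$ by $\Uu[u_0]=\bar u$, and a solution to \eqref{bfp} is precisely a fixed point of $\Uu$. I would apply Theorem~\ref{Sch4-1} with $\mathcal{X}=H^1(\Tt^d)$.

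First, I would establish the continuity and compactness of $\Uu$. If $u_n\to u$ in $H^1(\Tt^d)$, then $F(u_n)\to F(u)$ in $L^2(\Tt^d)$ because $\mathbf{b}\in C^1(\Tt^d;\Rr^d)$ makes $F$ continuous from $H^1(\Tt^d)$ to $L^2(\Tt^d)$. Testing the identity $B[\Uu[u_n]-\Uu[u],v]=-\int_{\Tt^d}(F(u_n)-F(u))v\,\dx$ against $v=\Uu[u_n]-\Uu[u]\in H^2(\Tt^d)$ and invoking the coercivity estimate \eqref{coerc} yields $\Uu[u_n]\to\Uu[u]$ in $H^2(\Tt^d)$, hence in $H^1(\Tt^d)$. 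The same computation applied to any bounded sequence in $H^1(\Tt^d)$ shows that its image under $\Uu$ is bounded in $H^2(\Tt^d)$, hence pre-compact in $H^1(\Tt^d)$ by the Rellich--Kondrachov theorem.

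Next, I would produce a uniform a priori bound for elements $w\in H^1(\Tt^d)$ satisfying $w=\lambda\,\Uu[w]$ with $\lambda\in[0,1]$. For $\lambda=0$ this reduces to $w=0$. For $\lambda\in(0,1]$, the identity $B[w,v]=-\lambda\langle F_w,v\rangle$ holds for all $v\in H^2(\Tt^d)$, and because $w\in H^2(\Tt^d)$ we may take $v=w$. Integration by parts, together with $\div(\mathbf{b})=0$ exactly as in the proof of Proposition~\ref{prop:Ffmon}, gives
\begin{equation*}
\epsilon\int_{\Tt^d}\bigl(w^2+(\Delta w)^2\bigr)\,\dx+\lambda\int_{\Tt^d}w^2\,\dx=\lambda\int_{\Tt^d}fw\,\dx,
\end{equation*}
and Young's inequality yields $\|w\|_{H^2(\Tt^d)}\leq C(\epsilon)$ independently of $\lambda$. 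Combining this with the continuity and compactness of $\Uu$, Theorem~\ref{Sch4-1} delivers a fixed point $u\in H^2(\Tt^d)$ of $\Uu$, which satisfies the identity in \eqref{bfp} by construction.

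To upgrade to $u\in H^4(\Tt^d)$, I would read \eqref{bfp} as the weak form of $\epsilon(u+\Delta^2 u)=-F(u)$. Since $u\in H^2(\Tt^d)$ and $\mathbf{b}\in C^1(\Tt^d;\Rr^d)$, we have $F(u)=u-\mathbf{b}\cdot Du-f\in L^2(\Tt^d)$, so $\Delta^2 u\in L^2(\Tt^d)$ in the distributional sense; on the torus this directly gives $u\in H^4(\Tt^d)$ via Fourier series, as $(1+|\xi|^4)\hat u(\xi)\in\ell^2(\Zz^d)$. The main pedagogical obstacle here is the interplay between the two regularity scales: one needs $\Uu$ to be compact in $H^1(\Tt^d)$ to apply Schaefer, yet the output of $\Uu$ lives in $H^2(\Tt^d)$ and the final solution must be bootstrapped to $H^4(\Tt^d)$. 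Everything else follows by standard Lax--Milgram and interpolation arguments already used in Section~\ref{vpa4-1}.
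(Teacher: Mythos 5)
Your proof is correct and follows essentially the same route as the paper: define the solution map $\tilde\Uu$ via Lax--Milgram, establish its continuity and compactness in $H^1(\Tt^d)$ through the coercivity estimate \eqref{coerc}, derive the $\lambda$-uniform $H^2$ bound by testing $B[w,\cdot]=-\lambda\langle F_w,\cdot\rangle$ against $w$ with $\div(\mathbf b)=0$, and apply Theorem~\ref{Sch4-1}. The only cosmetic difference is in the $H^4$ upgrade, where you invoke the Fourier characterization of $H^4(\Tt^d)$ directly while the paper cites Remark~\ref{remel}; both are standard elliptic-regularity arguments.
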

\begin{proof}
To prove the proposition, we apply 
the fixed-point result in Theorem~\ref{Sch4-1}.
For that, 
let  $\tilde\Uu: H^{1}(\Tt^d)\to H^{1}(\Tt^d)$
be defined for each $u_0\in H^{1}(\Tt^d)$ by
\begin{equation*}
\tilde\Uu[u_0] = \bar u,
\end{equation*}
where $\bar u\in H^{2}(\Tt^d)$ is a weak solution to \eqref{Bi4-1}.

First, we prove that $\tilde\Uu$ is continuous over $H^{1}(\Tt^d)$.
Fix $\{u_n\}_{n=1}^\infty \subset H^{1}(\Tt^d)$ such that
$u_n \to u$ in $H^{1}(\Tt^d)$ for some \(u\in H^{1}(\Tt^d)\).
Set
\begin{equation*}
\bar u = \tilde\Uu[u]  \enspace \text{ and }\enspace
\bar u_n = \tilde\Uu[u_n].
\end{equation*}
We then have
\begin{align*}
B[\bar u, v]=-\left\langle  F_u, v\right\rangle  \enspace \text{ and }\enspace
B[\bar u_n, v]=-\left\langle  F_{u_n}, v\right\rangle  \enspace \text{ for all }
v\in H^{2}(\Tt^d).
\end{align*}
Subtracting these two equations with $v=\bar u - \bar u_n \in
H^{2}(\Tt^d)$,  Young's inequality yields
\begin{align}
\epsilon \big (\|\bar u - \bar u_n\|_{L^2(\Tt^d)}^2 + \|\Delta
(\bar u - \bar u_n)\|_{L^2(\Tt^d)}^2\big)
&\leq 
\int_{\Tt^d}|F_u-F_{u_n}||\bar u-\bar u_n|\,\dx\notag\\\label{eh2}
&\leq
\frac{\epsilon}{2}\|\bar u-\bar u_n\|_{L^2(\Tt^d)}^2
+
\frac{C}{\epsilon}\|u-u_n\|_{H^{1}(\Tt^d)}^2,
\end{align}
from which we obtain that $\|\bar u - \bar u_n\|_{L^2(\Tt^d)}^2
+ \|\Delta (\bar u - \bar u_n)\|_{L^2(\Tt^d)}^2 \to 0$ as $n\to
\infty$.
By the Gagliardo--Nirenberg interpolation inequality, it follows
that $\|\bar u - \bar u_n\|_{H^{2}(\Tt^d)}\to 0$ as $n\to\infty$.
Hence, $\tilde\Uu$ is continuous over $H^{1}(\Tt^d)$. 

To prove compactness, consider a bounded sequence 
 $\{u_n\}_{n=1}^\infty \subset H^{1}(\Tt^d)$ and let $\bar u_n=\tilde\Uu(u_n)$.
 Selecting $u=0$ and $\bar u=\tilde\Uu(0)$, the inequality \eqref{eh2} gives the 
 boundedness in $H^2(\Tt^d)$ of $\bar u_n$. Hence, 
by the Rellich--Kondrachov theorem, $\tilde\Uu$ is compact over
$H^{1}(\Tt^d)$.

Next,  assume that  $\lambda\in [0,1]$ and  let $u_\lambda\in H^{2}(\Omega)$ be
such that
\begin{equation*}
u_\lambda=\lambda\, \tilde\Uu[u_\lambda].
\end{equation*}
If \(\lambda=0\), then \(u_\lambda=0\). 
 If $\lambda\in (0,1]$, we use the
fact that $\frac{u_\lambda}{\lambda}$ is a solution to \eqref{Bi4-1}
with $F_{u_0}$ replaced by $F_{u_\lambda}$  to get
\begin{equation}\label{bi4-2}
\int_{\Tt^d}\epsilon(u_\lambda v + \Delta u_\lambda \Delta
v)\,\dx
=
-\int_{\Tt^d}\lambda (u_\lambda - b\cdot Du_\lambda - f(x))v\,\dx
\enspace \text{ for all }
v\in H^{2}(\Tt^d).
\end{equation}
Setting $v=u_\lambda$ in \eqref{bi4-2},  integrating by parts
and using the Gagliardo--Nirenberg interpolation inequality yields
\begin{equation*}
\|u_\lambda\|_{H^{2}(\Tt^d)}^2\leq C_\epsilon,
\end{equation*}
where $C_\epsilon>0$ is independent of $\lambda$. 

Applying Theorem~\ref{Sch4-1} and the regularity of solutions
to \eqref{Bi4-1} proven above,  we conclude that there exists
 $\bar u\in H^{2}(\Tt^d)$ such that \begin{align*}
\bar u=\tilde\Uu[\bar u].
\end{align*}
Arguing as in Remark \ref{remel}, we see that $\bar u\in H^4(\Tt^d)$.  
\end{proof}

\begin{proof}[Proof of Proposition \ref{vari}]
        Proposition  \ref{vari} follows directly from the preceding proposition as 
        \eqref{bfp} implies \eqref{ss3-1}. 
\end{proof}

\subsubsection{Continuity method}

Here, we discuss our last approach to prove Proposition \ref{vari}. We use a continuation argument 
and the implicit function theorem. 

For each $\lambda\in [0,1]$, we consider the following PDE:
  \begin{equation}\label{cm4-1}
\epsilon( u + \Delta^{2} u) + (u - {\bf b}\cdot D u - \lambda
f(x)) = 0, \enspace \text{ in } \Tt^d.
\end{equation}
Note that \eqref{req4-1} corresponds to \eqref{cm4-1} with \(\lambda=1\).

Define
\begin{equation}\label{defS}
S=\Big\{\lambda \in [0,1]\ |\ \mbox{there exists a solution in
$H^{4}(\Tt^d)$ to \eqref{cm4-1}}\Big\}.
\end{equation}
We prove next that $S$ is a non-empty set, relatively open and closed in $[0,1]$; hence, 
by connectedness, $S=[0,1]$. In particular, we  have a solution, $u$, to
\eqref{cm4-1} with $\lambda=1$. This approach to solving PDEs
is called the continuity method.

 When $\lambda=0$, $u\equiv 0$ is a solution to \eqref{cm4-1}.
Thus, 
$S$ is nonempty with $0 \in S$. Therefore, it remains to prove closeness and openness, which is done
in the following two propositions.

\begin{pro}
The set $S$ in \eqref{defS} is relatively closed in $[0,1]$. 
\end{pro}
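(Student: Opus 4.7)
The plan is to take a sequence $\{\lambda_n\}_{n=1}^\infty \subset S$ with $\lambda_n \to \lambda \in [0,1]$, pick corresponding solutions $u_n \in H^{4}(\Tt^d)$ to \eqref{cm4-1} with $\lambda$ replaced by $\lambda_n$, derive uniform a priori estimates in $H^{4}(\Tt^d)$, and then extract a weakly convergent subsequence whose limit solves \eqref{cm4-1} for $\lambda$. This is a standard compactness/closure scheme for the continuity method.

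For the first step, I would multiply \eqref{cm4-1} (with $\lambda_n$ in place of $\lambda$) by $u_n$ and integrate over $\Tt^d$. Integrating by parts in the $\Delta^{2}$ term and using $\div(\mathbf{b})=0$ to eliminate the transport contribution, exactly as in the proof following \eqref{req4-1}, gives
\begin{equation*}
\int_{\Tt^d}\bigl( u_n^2 + \epsilon( u_n^2 + (\Delta u_n)^2)\bigr)\,\dx = \lambda_n \int_{\Tt^d} f\, u_n\,\dx.
\end{equation*}
Young's inequality applied to the right-hand side and the Gagliardo--Nirenberg interpolation inequality then yield a bound $\|u_n\|_{H^{2}(\Tt^d)}\leq C_\epsilon$, where $C_\epsilon$ is independent of $n$.

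The second step is to upgrade this to an $H^{4}$ bound. Solving \eqref{cm4-1} for $\Delta^{2}u_n$ gives
\begin{equation*}
\epsilon\,\Delta^{2} u_n = -\epsilon\, u_n - u_n + \mathbf{b}\cdot Du_n + \lambda_n f.
\end{equation*}
Since $\mathbf{b}\in C^1(\Tt^d;\Rr^d)$ and $u_n$ is uniformly bounded in $H^{2}(\Tt^d)$, the right-hand side is uniformly bounded in $L^{2}(\Tt^d)$. Hence $\|\Delta^{2} u_n\|_{L^{2}(\Tt^d)}$, and therefore $\|u_n\|_{H^{4}(\Tt^d)}$, stays uniformly bounded.

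Finally, extracting a subsequence (not relabeled), I would obtain $u_n \rightharpoonup \bar u$ weakly in $H^{4}(\Tt^d)$ for some $\bar u\in H^{4}(\Tt^d)$, and by the Rellich--Kondrachov theorem $u_n \to \bar u$ strongly in $H^{3}(\Tt^d)$. This strong convergence is enough to pass to the limit in the lower-order terms $u_n$ and $\mathbf{b}\cdot Du_n$, while weak convergence handles $\Delta^{2}u_n\rightharpoonup \Delta^{2}\bar u$ in $L^{2}(\Tt^d)$; together with $\lambda_n f\to\lambda f$, the limit $\bar u$ satisfies \eqref{cm4-1} with parameter $\lambda$, so $\lambda \in S$. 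I do not expect a genuine obstacle: the only delicate point is the interplay between $\div(\mathbf{b})=0$ and Young's inequality in the energy estimate, which has already been used above and transfers verbatim.
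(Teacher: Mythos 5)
Your proposal is correct and follows essentially the same compactness scheme as the paper; the one mild difference is the point at which you invoke elliptic regularity. The paper stops at the uniform $H^2(\Tt^d)$ bound from the energy estimate, extracts a weak $H^2$ limit, passes to the limit in the \emph{weak} formulation tested against $v\in H^2(\Tt^d)$, and only then uses elliptic regularity on the limit $u$ to upgrade it to $H^4(\Tt^d)$. You instead apply the same elliptic-regularity observation to each $u_n$ up front (reading $\epsilon\Delta^2 u_n$ off the equation and bounding it in $L^2$ by the $H^2$ estimate), obtain a uniform $H^4$ bound, and pass to the limit directly in the \emph{strong} form of the PDE using weak $H^4$ convergence for the top-order term and Rellich--Kondrachov for the rest. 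Both are correct and roughly equal in length; the paper's version keeps the compactness argument at the minimal regularity level needed, whereas your version avoids a separate weak-formulation step at the price of bootstrapping the a priori bound. There is no gap in your argument.
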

\begin{proof}
Assume that $\{\lambda_n\}_{n=1}^\infty\subset
S$ is such that $\lambda_n \to \lambda$ for some $\lambda\in
[0,1]$. We want to show that \(\lambda\in S\). Let $\{u_n\}_{n=1}^\infty
\subset  H^4(\Tt^d)$ be the corresponding solutions of
\begin{align*}
\epsilon( u_n + \Delta^{2} u_n) + (u_n - {\bf b}\cdot D u_n
- \lambda_n f(x)) = 0, \enspace \text{ in } \Tt^d,
\end{align*}
for all \(n\in \Nn\). Multiplying both sides of the preceding
equation  by $u_n$ and integrating by parts, we get
\begin{equation*}
\epsilon(
\|u_n\|_{L^2(\Tt^d)}^2 + \|\Delta u_n\|_{L^2(\Tt^d)}^2
)
\leq
C,
\end{equation*}
where $C>0$ is independent of $n$. By the Gagliardo--Nirenberg
interpolation inequality, there exists a constant, $C>0$, such
that  $\|u_n\|_{H^{2}(\Tt^d)}\leq C$ for all $n\in \Nn$. Therefore,
extracting a subsequence if necessary, we have $u_n\rightharpoonup
u $ in $H^{2}(\Tt^d)$ for some \(u\in H^{2}(\Tt^d)\).   

Given $v\in H^{2}(\Tt^d)$, $u_n$ satisfies
\begin{equation}\label{eq4-3-1}
\int_{\Tt^d}\epsilon(u_n v + \Delta u_n \Delta v)\,\dx=\int_{\Tt^d}(-
u_n +{\bf b}\cdot Du_n - \lambda_n f(x))v\,\dx.
\end{equation}
Then, taking the limit as \(n\to\infty\) in \eqref{eq4-3-1},
the convergences  $u_n \rightharpoonup u$ in $H^{2}(\Tt^d)$
and $\lambda_n\to\lambda$ in \(\Rr\) imply that
\begin{align*}
\int_{\Tt^d}\epsilon(u v + \Delta u \Delta v)\,\dx=\int_{\Tt^d}(-
u +{\bf b}\cdot Du - \lambda f(x))v\,\dx.
\end{align*}
Because $v\in H^{2}(\Tt^d)$ is arbitrary, we conclude that \(u\)
is a weak solution to \eqref{cm4-1} in $H^2(\Tt^d)$. 
By elliptic regularity, \(u\in H^4(\Tt^d)\).
Thus, $\lambda \in S$, which proves  that $S$ is closed.
\end{proof}

\begin{pro}
The set $S$ in \eqref{defS} is relatively open in $[0,1]$. 
\end{pro}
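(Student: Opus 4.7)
The plan is to use a direct linearization argument (morally, the implicit function theorem adapted to an affine problem) to show that whenever $\lambda_0\in S$, a relative neighborhood of $\lambda_0$ in $[0,1]$ lies in $S$. Fix $\lambda_0\in S$ and let $u_{\lambda_0}\in H^4(\Tt^d)$ be a corresponding solution of \eqref{cm4-1}. For $\lambda\in[0,1]$, I look for a solution of the form $u_\lambda=u_{\lambda_0}+w$ with $w\in H^4(\Tt^d)$. Since \eqref{cm4-1} is affine in $u$, this decomposition is exact and $w$ must satisfy the linear problem
\begin{equation*}
\epsilon(w+\Delta^2 w)+w-{\bf b}\cdot Dw=(\lambda-\lambda_0)f \quad\text{in }\Tt^d.
\end{equation*}
Thus, openness of $S$ reduces to producing an $H^4$-solution of this linear equation for any right-hand side of the form $(\lambda-\lambda_0)f\in L^2(\Tt^d)$.

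To solve the linear problem, I would invoke the bilinear form approach of Section~\ref{bfa4-2} essentially verbatim. Define $B\colon H^2(\Tt^d)\times H^2(\Tt^d)\to\Rr$ by
\begin{equation*}
B[w,v]=\int_{\Tt^d}\bigl(\epsilon(wv+\Delta w\,\Delta v)+wv-({\bf b}\cdot Dw)\,v\bigr)\,\dx.
\end{equation*}
Boundedness of $B$ is immediate from the Cauchy--Schwarz inequality. For coercivity, integration by parts together with $\div({\bf b})=0$ gives $\int_{\Tt^d}({\bf b}\cdot Dw)\,w\,\dx=0$, so $B[w,w]=\int_{\Tt^d}\bigl(\epsilon(w^2+(\Delta w)^2)+w^2\bigr)\,\dx$, and the Gagliardo--Nirenberg interpolation inequality promotes this to $B[w,w]\geq C\epsilon\,\|w\|_{H^2(\Tt^d)}^2$. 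The Lax--Milgram theorem then yields a unique $w\in H^2(\Tt^d)$ with $B[w,v]=(\lambda-\lambda_0)\int_{\Tt^d}fv\,\dx$ for all $v\in H^2(\Tt^d)$, and the elliptic bootstrap argument discussed in Remark~\ref{remel} upgrades this weak solution to $w\in H^4(\Tt^d)$.

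Combining these steps, $u_\lambda:=u_{\lambda_0}+w\in H^4(\Tt^d)$ solves \eqref{cm4-1} for every $\lambda\in[0,1]$, and in particular for all $\lambda$ in a relative neighborhood of $\lambda_0$; hence $S$ is relatively open in $[0,1]$. I do not anticipate any serious obstacle: the $\epsilon\Delta^2$-regularization is precisely what makes the coercivity of $B$ on $H^2(\Tt^d)$ robust, the transport term disappears from the quadratic form $B[w,w]$ thanks to $\div({\bf b})=0$, and the $H^4$-regularity follows from standard elliptic bootstrap for the bi-Laplacian on the torus. The only step that required any care is the coercivity estimate, but this is already isolated in \eqref{coerc} and reused here.
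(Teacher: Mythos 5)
Your proof is correct, and it takes a somewhat more direct route than the paper. The paper's proof computes the Fr\'echet derivative $\Ll(v)=\epsilon(v+\Delta^{2}v)+v-{\bf b}\cdot Dv$ of the map $u\mapsto \epsilon(u+\Delta^{2}u)+u-{\bf b}\cdot Du-\lambda_0 f$, shows $\Ll$ is an isomorphism from $H^4(\Tt^d)$ to $L^2(\Tt^d)$ (injectivity by testing against the difference, surjectivity by Lax--Milgram plus elliptic regularity, using exactly the bilinear form you wrote), and then invokes the implicit function theorem in Banach spaces. You instead exploit the fact that \eqref{cm4-1} is affine in $u$ and linear in $\lambda$: setting $u_\lambda=u_{\lambda_0}+w$, the correction $w$ satisfies the linear equation $\epsilon(w+\Delta^2 w)+w-{\bf b}\cdot Dw=(\lambda-\lambda_0)f$ exactly, with no remainder term, so Lax--Milgram and bootstrap regularity produce a solution directly, with no need for a linearization or an inverse-function argument. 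The two proofs share the essential computation (the bilinear form, the coercivity via $\div({\bf b})=0$ and Gagliardo--Nirenberg, the bootstrap to $H^4$). What your version buys is a cleaner statement: since the correction problem is solvable for \emph{every} $\lambda\in[0,1]$, you in fact obtain $S=[0,1]$ at this step, making the open/closed/connected bookkeeping of the continuity method unnecessary for this linear model. What the paper's IFT formulation buys is a template that transfers verbatim to the genuinely nonlinear regularized MFG system treated later (Lemma~\ref{lem:open}), where the linearization is only approximate and the affine shortcut is unavailable; that is presumably why the paper phrases this toy example in IFT language.
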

\begin{proof}

 Fix $\lambda_0
\in S$, and   consider the Fr\'echet derivative, $\Ll: H^{4}(\Tt^d)\to
L^{2}(\Tt^d)$, of the map \(u\in H^{4}(\Tt^d) \mapsto\epsilon(
u + \Delta^{2} u) + (u - {\bf b}\cdot Du - \lambda_0
f)\in L^2(\Tt^d)   \). We have 
\begin{equation*}
\Ll(v)=\epsilon(v + \Delta^{2} v) + (v -{\bf b}\cdot Dv), \enspace
v\in H^{4}(\Tt^d).
  \end{equation*}
We aim at proving that  $\Ll$ is an isomorphism from $H^{4}(\Tt^d)$
to $L^{2}(\Tt^d)$, in which case  we conclude that $S$ is relatively
open by the implicit function theorem
in Banach spaces (see, for example, \cite{Die}).

To prove that $S$ is injective, assume that $v_1$, $v_2\in H^{4}(\Tt^d)$
satisfy $\Ll(v_1)=\Ll(v_2)$. Then, multiplying $\Ll(v_1)-\Ll(v_2)$
by $v_1-v_2$, integrating over \(\Tt^d\), an   integrating by
parts and  the  condition $\div({\bf b})=0$ imply that
\begin{equation*}
\|v_1-v_2\|_{L^2(\Tt^d)}^2 + \epsilon (\|v_1-v_2\|_{L^2(\Tt^d)}^2
+\|\Delta(v_1-v_2)\|_{L^2(\Tt^d)}^2)=0,
\end{equation*}
from which we get $v_1-v_2\equiv 0$. Hence, $\Ll$ is injective.

Next, we prove that $\Ll$ is surjective.
 Fix $w_0\in L^{2}(\Tt^d)$. Using the Lax--Milgram theorem, we
prove next that there exists $v_0\in H^{4}(\Tt^d)$ such that
$\Ll (v_0)=w_0$.
Define $B:H^{2}(\Tt^d)\times H^{2}(\Tt^d)\to \Rr$ by
\begin{equation*}
B[v_1, v_2]=
\int_{\Tt^d}\big[\epsilon (v_1v_2+ \Delta v_1\Delta v_2)+v_1v_2
- {\bf b}\cdot Dv_1 v_2\big]\,\dx, \quad v_1, \, v_2\in H^{2}(\Tt^d).
\enspace 
\end{equation*}
Arguing as before, \(B\) is a bilinear form satisfying
\begin{equation*}
B[v,v]=\epsilon(\|v\|_{L^2(\Tt^d)}^2+ \|\Delta v\|_{L^2(\Tt^d)}^2)+\|v\|_{L^2(\Tt^d)}^2
\geq C_\epsilon \|v\|_{H^{2}(\Tt^d)}
\end{equation*}
for all \(v\in H^{2}(\Tt^d) \), where $C_\epsilon >0$ is independent
of $v$. Moreover, \begin{equation*}
|B[v_1,v_2]|\leq C\|v_1\|_{H^2(\Tt^d)}\|v_2\|_{H^2(\Tt^d)}
\end{equation*}
for all \(v_1\), \( v_2\in H^{2}(\Tt^d)\), where \(C>0\) is
independent
of \(v_1\) and \(v_2\). Therefore, by the Lax--Milgram theorem,
there exists 
$v_0\in H^{2}(\Tt^d)$ such that\begin{equation*}
B[v_0, w]=\int_{\Tt^d}w_0 \,w\,\dx \enspace \text{ for all }
w\in H^{2}(\Tt^d).
\end{equation*}  
Because $v_0\in H^{2}(\Tt^d)$,  we have $g=- v_0
+ {\bf b}\cdot Dv_0 + w_0  \in L^2(\Tt)$. Then, for all $w\in
H^{2}(\Tt^d)$, $v_0$ satisfies
\begin{equation*}
\int_{\Tt^d}\epsilon \big(v_0 w+ \Delta v_0 \Delta w\big)\,\dx
=
\int_{\Tt^d}g\, w\,\dx.
\end{equation*}
By the elliptic regularity theory, we have $v_0\in H^{4}(\Tt^d)$,
from which we conclude that $\Ll$ is surjective.
\end{proof}

Since $S$ is nonempty, relatively open, and closed in \([0,1]\),
we have $S=[0,1]$. Thus, there exists a solution, $u\in H^{4}(\Tt^d)$,
to \eqref{cm4-1} with $\lambda =1$.

\begin{proof}[Proof of Proposition \ref{vari}]
Proposition  \ref{vari} follows directly from the preceding results as 
        \eqref{cm4-1} implies \eqref{ss3-1}. 
\end{proof}

\section{Existence of solutions to stationary MFG problems}\label{rpMFG}
Here, we use Minty's method to construct weak solutions to stationary MFGs. The main difficulty in applying Minty's method is to build regularizations that preserve monotonicity, 
have uniform a priori bounds, and for which we can prove the existence of solutions. We show how to use the techniques developed in the prior section to achieve these goals for MFGs.

Our model problem is the MFG problem in \eqref{mp} with the corresponding operator
$F$ given by \eqref{defF1}. 
As usual in the MFG theory, we work under the following general assumptions. The first one
concerns the (mild) regularity of the problem data.

\begin{hyp}
        \label{a0}
The Hamiltonian, $H$, is of class $C^2$ and convex. The coupling, 
$g$, is  a non-negative continuous increasing function. The source term,
$\phi$, belongs to \( C(\Tt^d)$ and is such that $\phi> 0$ and $\int_{\Tt^d}\phi\,\dx=1$.
\end{hyp}
 
 The previous assumption is satisfied in many MFGs that appear in the literature.
 For example, $H(p)=\tfrac{|p|^2}{2}$,  $g(m)=m$, and  $\phi\equiv 1$ satisfy our requirements.

 The convexity of the Hamiltonian
reflects the fact that 
 the mean-field game is associated with a control problem.
 The monotonicity of $g$ encodes that 
 the agents are crowd-averse. 
The non-negativity assumption on $g$ is used to simplify the presentation. 
However, our techniques  may be adapted to handle, for example,  $g(m)=\log m$.  
 The source $\phi$ represents the incoming flow of agents, which we chose to be normalized to $1$. 

\begin{hyp}\label{grH5-1}
        There exist constants, $\alpha>1$ and
        $C>0$, such that
        \begin{equation*}
                - H(p) + D_p H(p)\cdot p\geq \frac{1}{C}|p|^\alpha - C.
        \end{equation*}
\end{hyp}
By the definition of the Hamiltonian through the Legendre transform in 
\eqref{hamlt}, the preceding assumption is a lower bound in the Lagrangian when written in terms of the momentum, $p$, variable. 
\begin{hyp}\label{grH5-2} Let $\alpha$ be as in Assumption \ref{grH5-1}. There exists a constant, $C>0$, such that 
        \begin{equation*}
                H(p)\geq \frac{1}{C}|p|^\alpha - C.
        \end{equation*}
\end{hyp}
In classical mechanics, the Hamiltonian is the energy of the system. The preceding bound imposes the growth of the energy as the momentum increases.
The last two assumptions are satisfied for $L(v)=\frac{|v|^{\alpha'}}{\alpha'}$ with corresponding Hamiltonian $H(p)=\frac{|p|^{\alpha}}{\alpha}$, where $\frac 1 \alpha+\frac 1 {\alpha'}=1$. 

\begin{hyp}\label{grg5-1}
        For each $\delta \in (0,1)$, there exists a constant, $C_\delta>0$, such
        that, for all $m\in L^1(\Tt^d)$ with $m\geq0$, we have
        \begin{equation*}
                \int_{\Tt^d} m  \,\dx + \int_{\Tt^d}g(m)\,\dx
                \leq
                \delta\int_{\Tt^d} mg(m) \,\dx + C_\delta.
        \end{equation*}
\end{hyp}
\begin{remark}
        In Assumption~\ref{grg5-1}, we assume that $g$ is non-negative. However,
        we may relax this  condition on $g$ and assume instead that \(g\)  is  bounded
        from below. An example of a function bounded from below satisfying the estimate
        in  Assumption~\ref{grg5-1} is given by $g(m)=m\ln m$ for \(m\in\Rr_+\).
        Moreover, though not bounded from below, the function \(g\) defined by \(g(m)=\log(m)\)
        satisfies Assumption~\ref{grg5-1}, and also Assumption~\ref{wcml1} below,
        provided that  \(m>0\) almost everywhere in \(\Tt^d\).\end{remark}

\begin{hyp}\label{wcml1}
        Assume that $\{m_j\}_{j=1}^\infty\subset L^1(\Omega)$ satisfies $m_j\geq
        0$ and 
        \begin{align*}
                \sup_{j\in \Nn}\int_{\Tt^d}m_j g(m_j)\,\dx <\infty.
        \end{align*}
        Then, there is a subsequence, $\{m_{j_i}\}_{i=1}^\infty$, that weakly converges
        in \(L^1(\Tt^d)\)  to some $m\in L^1(\Tt^d)$.
\end{hyp}

Observe that $F$ in  \eqref{defF1} is well defined in 
$D(F)=H^{2k}(\Tt^d; \Rr_0^+)\times H^{2k}(\Tt^d)$ taking values in $L^2(\Tt^d)\times L^2(\Tt^d)$ for  $k\in\Nn$ large enough. 
For our purposes, we work with 
$2k > \frac{d}{2} + 3$. 
Moreover, we see that $F$ is monotone in $L^{2}(\Tt^d)\times
L^{2}(\Tt^d)$ and  \eqref{A1mono} holds.  

The main goal of this section is to prove existence of  weak solutions
to \eqref{mp} in the sense of the following definition. 

\begin{definition}
A pair $(m, u)\in L^1(\Tt^d; \Rr_0^+)\times L^1(\Tt^d)$ is a weak solution 
 to \eqref{mp}, that is to 
\begin{equation}\label{mfg5-1}
F
  \begin{bmatrix}
      m \\
      u
  \end{bmatrix}
=0,
\end{equation}
if
        \begin{equation}
                \label{ws}
        \left\langle  
        F
        \begin{bmatrix}
                w \\
                v
        \end{bmatrix}
        ,
        \begin{bmatrix}
                m \\
                u
        \end{bmatrix}
        -  \begin{bmatrix}
                w \\
                v
        \end{bmatrix}
        \right\rangle 
        \leq
        0
\end{equation}
for  all $(w,v)\in H^{2k}(\Tt^d; \Rr_0^+)\times H^{2k}(\Tt^d)$.
\end{definition}

\begin{remark}
In this paper, we use $\langle\cdot, \cdot\rangle$ to denote the $L^2$ inner product. By abuse of notation, whenever $f$ and $g$ are functions, not necessarily in $L^2$ but whose product is in $L^1$, we write $\langle f, g\rangle=\int_{\Tt^d} fg\,\dx$. In the inequality \eqref{ws}, we use this convention. In particular, 
because $F(w,v)$ is continuous whenever
$(w,v)\in H^{2k}(\Tt^d; \Rr_0^+)\times H^{2k}(\Tt^d)$,  the corresponding integral in \eqref{ws} is well defined. 
\end{remark}

In the following theorem, we prove the existence of weak 
solutions. 

\begin{teo}\label{mt1}
        Under Assumptions~\ref{a0}--\ref{wcml1}, there exists a weak solution $(m,u)\in L^1(\Tt^d; \Rr_0^+)\times W^{1,\alpha}(\Tt^d)$ to \eqref{mp}.
\end{teo}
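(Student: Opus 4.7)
The strategy is to apply Minty's method from Section~\ref{exmono} together with the higher-order regularization template of Section~\ref{streg}. For $k\in\Nn$ with $2k>d/2+3$ and $\epsilon>0$, I would introduce the regularized operator
\begin{equation*}
F_\epsilon\begin{bmatrix} w \\ v \end{bmatrix}
=
F\begin{bmatrix} w \\ v \end{bmatrix}
+\epsilon\begin{bmatrix} w+\Delta^{2k}w \\ v+\Delta^{2k}v \end{bmatrix}
\end{equation*}
defined on $D=H^{2k}(\Tt^d;\Rr_0^+)\times H^{2k}(\Tt^d)$. The added block is self-adjoint and positive semidefinite in $L^2\times L^2$, so $F_\epsilon$ remains monotone on $D$, and the Minty conditions \eqref{mono3-2}--\eqref{eq:conttype} follow from the density of $D$ together with the Sobolev embedding (the choice $2k>d/2+3$ makes $H(Dv)$ and $\mathrm{div}(wD_pH(Dv))$ continuous on $D$).

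The first step is to establish, for each fixed $\epsilon>0$, the existence of $(m_\epsilon,u_\epsilon)\in D$ with $F_\epsilon(m_\epsilon,u_\epsilon)=0$. I would run a Schaefer-type fixed-point argument (Theorem~\ref{Sch4-1}) in the spirit of Section~\ref{vpa4-1} and Section~\ref{bfa4-2}: freeze the nonlinear couplings $g(w)$, $H(Dv)$, and $D_pH(Dv)$ at a reference pair $(w_0,v_0)$, solve the resulting two uncoupled linear fourth-order-type problems for $(w,v)$ via Lax--Milgram (coercivity in $H^{2k}$ coming entirely from the $\epsilon(I+\Delta^{2k})$ block), and verify the continuity and compactness of the map $(w_0,v_0)\mapsto(w,v)$ by Rellich--Kondrachov. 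Preservation of the sign constraint $m_\epsilon\geq 0$, which the higher-order perturbation disrupts, can be enforced by replacing $g(w)$ with $g(w^+)$ in the regularized operator; only nonnegative values of $w$ survive in the final limit.

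The second and most delicate step is the derivation of $\epsilon$-independent a priori estimates. Testing the first component of $F_\epsilon(m_\epsilon,u_\epsilon)=0$ against $m_\epsilon$ and the second against $u_\epsilon$, then adding and integrating by parts yields
\begin{equation*}
\int_{\Tt^d} m_\epsilon\bigl(D_pH(Du_\epsilon)\cdot Du_\epsilon - H(Du_\epsilon)\bigr)\,\dx
+\int_{\Tt^d} m_\epsilon\,g(m_\epsilon)\,\dx + \epsilon R_\epsilon
= \int_{\Tt^d} V\,m_\epsilon\,\dx + \int_{\Tt^d} \phi\,u_\epsilon\,\dx,
\end{equation*}
with a nonnegative remainder $R_\epsilon$ collecting the regularization contributions. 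Assumption~\ref{grH5-1} bounds the first integral from below by $\tfrac{1}{C}\int m_\epsilon |Du_\epsilon|^\alpha\,\dx - C\int m_\epsilon\,\dx$; Assumption~\ref{grg5-1} absorbs $\int m_\epsilon\,\dx$ and $\int V m_\epsilon\,\dx$ into $\int m_\epsilon g(m_\epsilon)\,\dx$; the remaining term $\int \phi u_\epsilon\,\dx$ is controlled by first exploiting $\phi>0$ with $\int\phi\,\dx=1$ and the Hamilton--Jacobi-type equation to pin the mean of $u_\epsilon$, and then invoking Assumption~\ref{grH5-2} and Poincar\'e's inequality to promote this to a $W^{1,\alpha}$-bound. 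These steps yield the uniform estimates
\begin{equation*}
\int_{\Tt^d} m_\epsilon\, g(m_\epsilon)\,\dx + \int_{\Tt^d} m_\epsilon|Du_\epsilon|^\alpha\,\dx + \|u_\epsilon\|_{W^{1,\alpha}(\Tt^d)} \leq C.
\end{equation*}

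Finally, Assumption~\ref{wcml1} delivers a subsequence with $m_\epsilon \rightharpoonup m$ weakly in $L^1(\Tt^d)$, $m\geq 0$, while the $W^{1,\alpha}$ bound yields $u_\epsilon \rightharpoonup u$ weakly in $W^{1,\alpha}(\Tt^d)$. Monotonicity of $F_\epsilon$ combined with $F_\epsilon(m_\epsilon,u_\epsilon)=0$ gives
\begin{equation*}
\bigl\langle F_\epsilon(w,v), (m_\epsilon,u_\epsilon)-(w,v)\bigr\rangle \leq 0
\quad\text{for every}\ (w,v)\in D,
\end{equation*}
exactly as in the Minty step behind Proposition~\ref{vari}. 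Since $F_\epsilon(w,v)\to F(w,v)$ strongly in $L^2\times L^2$ for smooth test pairs, letting $\epsilon\to 0$ produces the weak variational inequality \eqref{ws}. The principal obstacle is the uniform $W^{1,\alpha}$ bound on $u_\epsilon$, which is not automatic because $F$ is not coercive in $u$ alone and requires carefully exploiting the positivity and normalization of $\phi$; this, coupled with the need to preserve $m_\epsilon\geq 0$ through the regularization so that Assumptions~\ref{grg5-1}--\ref{wcml1} can be invoked, forms the crux of the argument.
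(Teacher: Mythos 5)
Your overall strategy (regularize with $\epsilon(I+\Delta^{2k})$, solve the regularized problem by a Schaefer fixed point, derive uniform estimates, pass to the limit by Minty) is the right template and matches the general architecture of Section~\ref{rpMFG}. However, there is a genuine gap at the step you yourself flag as delicate: the preservation of $m_\epsilon\geq 0$. You propose to handle it by ``replacing $g(w)$ with $g(w^+)$ in the regularized operator,'' claiming that ``only nonnegative values of $w$ survive in the final limit.'' This does not work. Truncating the argument of $g$ changes nothing about the sign of the solution $m_\epsilon$: the term $\epsilon\Delta^{2k}m$ does not obey a maximum principle, so solutions of the equality $F_\epsilon(m_\epsilon,u_\epsilon)=0$ can and generically will take negative values, regardless of how $g$ is truncated. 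Without $m_\epsilon\geq 0$ you cannot apply Assumption~\ref{grg5-1} or Assumption~\ref{wcml1}, the energy estimate collapses (the term $\int m_\epsilon|Du_\epsilon|^\alpha$ need no longer be nonnegative), and the weak $L^1$ limit $m$ need not be nonnegative either, so the pair $(m,u)$ is not even a candidate weak solution in the sense of \eqref{ws}.

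The paper addresses this obstruction in two substantively different ways, and you need one of them (or an equivalent). In the variational approach of Subsection~\ref{vpbfa5-1}, $m_\epsilon$ is constructed as the minimizer of $J_\epsi$ over the \emph{constrained} set $H^{2k}(\Tt^d;\Rr_0^+)$, so nonnegativity is built in from the outset; the price is that the first component of the regularized system holds only as a one-sided variational inequality \eqref{vi5-1} rather than as an equation, and passing to the limit then requires the extra Assumption~\ref{asup}. In the continuity-method approach of Subsection~\ref{cm5}, one adds a singular penalty $p_\epsilon(m)$ (see \eqref{eq:defpen} and \eqref{crp5-1x}) that blows up as $m\to 0^+$; this forces \emph{strict} positivity of the approximating solutions (Lemma~\ref{mlb5}) while preserving monotonicity (Remark~\ref{remfemon}), and it is this route that proves Theorem~\ref{mt1} under Assumptions~\ref{a0}--\ref{wcml1} only. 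Note that the penalty is also what makes the secondary step you gloss over — bounding $\int_{\Tt^d}u_\epsilon\,\dx$ via integrating the Hamilton--Jacobi equation — actually go through: the bound $\int_{\Tt^d}H(Du_\epsilon)\,\dx\leq C$ of Remark~\ref{estimateforH} uses the sign of $p_\epsilon$ in the a priori estimate \eqref{apri-pterm}. Your proposal needs to be repaired by adopting one of these two mechanisms rather than the $g(w^+)$ device.
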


We discuss two approaches to prove this theorem. Both introduce a regularized MFG operator $F_\epsilon$
and establish the existence of a solution of 
\begin{equation}\label{mfg5-epsi}
        F_\epsi
        \begin{bmatrix}
                m_\epsi \\
                u_\epsi
        \end{bmatrix}
        =0,
\end{equation}
with $m_\epsilon\geq 0$.  In both cases, the fundamental difficulty  lies in proving the non-negativity
of  the population density, $m_\epsi$.
The first approach gives the existence of a weak solution of \eqref{mfg5-epsi} with $m_\epsilon\geq 0$
using a variational approach combined with a
bilinear form techniques. 
The proof corresponding to this approach requires the following additional assumption.
\begin{hyp}
\label{asup}
Let $\alpha$ be as in Assumption \ref{grH5-1}. If $\alpha \geq d$, let
$\alpha^*=+\infty$. Otherwise,
let $\alpha^*$ be the Sobolev conjugate exponent of $\alpha$,
\[
\frac{1}{\alpha^*}=\frac{1}{\alpha}-\frac{1}{d}.
\]
There exist $q<\alpha^*$ and  $C>0$ such that, for all \(m\in
L^1(\Tt^d)\),
\[
\int_{\Tt^d} m^{q'}\,\dx\leq C\int_{\Tt^d} m g(m)\,\dx+C,
\]where \(q'\) is the conjugate exponent of \(q\).
Moreover,
\begin{equation}
\label{eq:Hfromabove}
\begin{aligned}
H(p)\leq C(|p|^\alpha +1).
\end{aligned}
\end{equation}
\end{hyp}

The second approach constructs a smooth solution to  \eqref{mfg5-epsi} with $m_\epsilon>0$
using the continuity method and does not require the preceding assumption. Hence, Theorem \ref{mt1} is stated
under Assumptions~\ref{a0}--\ref{wcml1}.

Our first choice of $F_\epsilon$ is as follows. 
For  $\epsilon \in (0,1)$,
we set
\begin{equation}\label{rp5-1}
F_\epsi\begin{bmatrix}
      m \\
      u
  \end{bmatrix}
  = \begin{bmatrix}
      - u-H(Du) + g(m) - V + \epsilon(m+\Delta^{2k}m ) \\
      m-\div\big(m D_p H(Du)\big) -\phi + \epsilon(u+ \Delta^{2k} u)
  \end{bmatrix}
\end{equation}
for $(m, u)\in H^{4k}(\Tt^d; \Rr_0^+)\times H^{4k}(\Tt^d)$.

A key feature of this regularization is that it preserves  monotonicity.
In fact,
for all $(m_1,u_1)$, $(m_2,
u_2) \in H^{4k}(\Tt^d;\Rr_0^+)\times H^{4k}(\Tt^d)$, 
we have
\begin{equation}\label{eq:Fepsimon}
\begin{aligned}
&\left\langle 
F_\epsi
  \begin{bmatrix}
      m_1 \\
      u_1
  \end{bmatrix}
-
F_\epsi
  \begin{bmatrix}
      m_2 \\
      u_2
  \end{bmatrix}
,
  \begin{bmatrix}
      m_1 \\
      u_1
  \end{bmatrix}
-  \begin{bmatrix}
      m_2 \\
      u_2
  \end{bmatrix}
\right\rangle \\
&\quad=\left\langle 
F
  \begin{bmatrix}
      m_1 \\
      u_1
  \end{bmatrix}
-
F
  \begin{bmatrix}
      m_2 \\
      u_2
  \end{bmatrix}
,
  \begin{bmatrix}
      m_1 \\
      u_1
  \end{bmatrix}
-  \begin{bmatrix}
      m_2 \\
      u_2
  \end{bmatrix}
\right\rangle  +\epsi\int_{\Tt^d} \big[(m_1-m_2)^2 + (\Delta^km_1-\Delta^km_2)^2\big]\,\dx\geq
0, 
\end{aligned}
\end{equation}
where we used the monotonicity of \(F\).

\begin{remark}\label{rmk:notation}
In general,  if \((m,
u) \in H^{2k}(\Tt^d;\Rr_0^+)\times H^{2k}(\Tt^d)\), then \(F_\epsi[m,u]\) may not
belong to \(L^2(\Tt^d) \times L^2(\Tt^d)\) due to the presence of
the \(4kth\)-derivatives. However, to simplify the notation, given  $(m_1,u_1)$,
$(m_2,
u_2) \in H^{2k}(\Tt^d;\Rr_0^+)\times H^{2k}(\Tt^d)$,
we write
\begin{equation*}
\begin{aligned}
\left\langle 
F_\epsi
  \begin{bmatrix}
      m_1 \\
      u_1
  \end{bmatrix}
,
    \begin{bmatrix}
      m_2 \\
      u_2
  \end{bmatrix}
\right\rangle  \quad \text{in  place of}\quad 
\left\langle 
F
  \begin{bmatrix}
      m_1 \\
      u_1
  \end{bmatrix}
,
    \begin{bmatrix}
      m_2 \\
      u_2
  \end{bmatrix}
\right\rangle  +\epsi\int_{\Tt^d} \big(m_1m_2+ \Delta^km_1\Delta^km_2\big)\,\dx.
\end{aligned}
\end{equation*}
Note that   if \((m_1,
u_1) \in H^{4k}(\Tt^d;\Rr_0^+)\times H^{4k}(\Tt^d)\), then an integration by parts
shows that
these  two expressions coincide.

We further observe that, with this convention in mind, \eqref{eq:Fepsimon}
holds for all   $(m_1,u_1)$,
$(m_2,
u_2) \in H^{2k}(\Tt^d;\Rr_0^+)\times H^{2k}(\Tt^d)$.
\end{remark}

In our second regularization, we address the non-negativity constraint of $m$ 
by adding a penalty term, $p_\epsilon$, to the first equation. This technique was first used in this context in \cite{FG2}.
More precisely, for each \(\epsi\in(0,1)\), we fix a non-decreasing $C^\infty$ function, $p_\epsilon \colon (0,\infty)\to (-\infty, 0]$, such that
\begin{equation}\label{eq:defpen}
        {\it p}_\epsilon(t)=
        \begin{cases}
                -\frac{1}{t^{d+1}}  &\text{if }0<t\leq \frac{\epsilon}{2}\\
                0  &\text{if } t\geq \epsilon.
        \end{cases}
\end{equation}
Note that \(p_\epsilon(t)\to -\infty \) as   $t \to 0^+$.
Then, we consider the following alternative choice for $F_\epsilon$:
\begin{equation}\label{crp5-1x}
        F_\epsilon
        \begin{bmatrix}
                m \\
                u
        \end{bmatrix} =
        \begin{bmatrix}- u-H(Du) +g(m) - V + p_\epsilon(m)+ \epsilon(m+\Delta^{2k}m )\\
                m-\div\big(m D_p H(Du)\big) - \phi  + \epsilon(u+ \Delta^{2k} u) 
        \end{bmatrix}, 
\end{equation}
defined for $u$ and $m$ smooth with $m>0$. 
\begin{remark}
\label{remfemon}
As with the previous regularization in \eqref{rp5-1}, 
the regularization in 
\eqref{crp5-1x} is also monotone.  This can be seen following the same steps used to establish \eqref{eq:Fepsimon}.
\end{remark}

\subsection{Variational problem and bilinear form approaches}\label{vpbfa5-1}
Here, we combine the variational approach with the   bilinear form arguments
discussed in Subsections~\ref{vpa4-1}--\ref{bfa4-2}.

First, we choose $k \in \Nn$ such that $2k > \frac{d}{2} + 3 $,  fix $(m_1,
u_1)\in H^{2k-1}(\Tt^d;\Rr_0^+)\times H^{2k-1}(\Tt^d)$, and     define
\begin{equation*}
\begin{aligned}
&f_1[m_1, u_1]=-u_1 - H(D u_1) + g(m_1) - V, \\  
&f_2[m_1,u_1]= - m_1 + \div(m_1 D_p H(D u_1)) + \phi.
\end{aligned}
\end{equation*}
Consider the variational problem of finding
 $m \in H^{2k}(\Tt^d; \Rr_0^+)$ such that
\begin{equation}\label{vp5-1}
J_\epsi(m)
=
\inf_{w\in H^{2k}(\Tt^d; \Rr_0^+)} J_\epsi(w),
\quad \text{where }
J_\epsi(w)= \int_{\Tt^d}\Big(\frac{\epsilon}{2}\big(w^2 + (\Delta^k
w)^2\big) + f_1[m_1,u_1]w \Big)\,\dx.
\end{equation} 
Next,  we define
the bilinear form
\begin{equation*}
B_\epsi[v_1,v_2]
=
\epsilon\int_{\Tt^d}(v_1 v_2 +\Delta^k v_1 \Delta^k v_2)\,\dx \quad \text{for
$v_1$, $v_2 \in H^{2k}(\Tt^d)$,  }
\end{equation*}
and consider the problem of finding  $u\in H^{2k}(\Tt^d)$ such that
\begin{equation}\label{bf5-1}
B_\epsi[u, v]=\left\langle f_2[m_1, u_1] , v\right\rangle \ \ \mbox{for all } v\in H^{2k}(\Tt^d),
 \end{equation}
where, as before, $\left\langle \cdot, \cdot\right\rangle $ stands for the $L^2$-inner
product. This bilinear form satisfies the conditions in Lax--Milgram's theorem for the Hilbert space $H^{2k}(
\Tt^d)$. Namely, it is coercive and bounded as in \eqref{coerc} and \eqref{bound}.

To prove existence and uniqueness of solutions to these two problems, we
first observe that  $(m_1, u_1)\in C^1(\Tt^d)\times C^1(\Tt^d)$ by Morrey's
embedding
theorem. Hence, $f_1[m_1,u_1 ]\in C(\Tt^d)$ and $ f_2[m_1,u_1] \in L^2(\Tt^d)$, provided that Assumption \ref{a0} holds.
Therefore, we can argue as  in Subsections~\ref{vpa4-1} and \ref{bfa4-2}
to show the following proposition.
\begin{pro}
Suppose that Assumption \ref{a0} holds and that $(m_1,
u_1)\in H^{2k-1}(\Tt^d)\times H^{2k-1}(\Tt^d)$.
Then , there exists a unique solution to problems   \eqref{vp5-1}
and  \eqref{bf5-1}.
\end{pro}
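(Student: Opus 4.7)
The plan is to follow the template laid out in Subsections~\ref{vpa4-1} and \ref{bfa4-2}, dealing with the two problems separately but with the same preliminary step: showing that $f_1[m_1,u_1]$ and $f_2[m_1,u_1]$ have enough regularity to play the roles of the data $F(u_0)$ from the linear example. Since $2k>\tfrac{d}{2}+3$, we have $2k-1>\tfrac{d}{2}+2$, so Morrey's embedding gives $(m_1,u_1)\in C^2(\Tt^d)\times C^2(\Tt^d)$. Combined with Assumption~\ref{a0} ($H\in C^2$, $g$ continuous, $V,\phi\in C(\Tt^d)$), I get $f_1[m_1,u_1]\in C(\Tt^d)\hookrightarrow L^2(\Tt^d)$ and $f_2[m_1,u_1]=-m_1+\operatorname{div}(m_1D_pH(Du_1))+\phi\in C(\Tt^d)\hookrightarrow L^2(\Tt^d)$, since the divergence makes sense classically thanks to the $C^2$ regularity of $m_1$ and $u_1$.

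For the variational problem \eqref{vp5-1}, I would apply the direct method on the closed convex admissible set $H^{2k}(\Tt^d;\Rr_0^+)\subset H^{2k}(\Tt^d)$, exactly mimicking Proposition~\ref{prop35}. Coercivity of $J_\epsi$ follows from
\[
J_\epsi(w)\geq \frac{\epsilon}{2}\bigl(\|w\|_{L^2(\Tt^d)}^2+\|\Delta^k w\|_{L^2(\Tt^d)}^2\bigr)-\|f_1[m_1,u_1]\|_{L^2(\Tt^d)}\|w\|_{L^2(\Tt^d)},
\]
absorbing the linear term via Young's inequality and then upgrading to an $H^{2k}$-norm lower bound through the Gagliardo--Nirenberg interpolation inequality (the only slightly delicate point, since the quadratic form only controls $w$ and $\Delta^k w$ in $L^2$). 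Weak lower semicontinuity comes from the convexity and continuity of the quadratic part and the $L^2$-continuity of the linear part, and since $\{w\geq 0\}$ is weakly closed, the infimum is attained. Strict convexity of the quadratic term yields uniqueness.

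For the bilinear form problem \eqref{bf5-1}, I would invoke the Lax--Milgram theorem on $H^{2k}(\Tt^d)$, following Subsection~\ref{bfa4-2}. Coercivity of $B_\epsi$ on $H^{2k}(\Tt^d)$ is again a Gagliardo--Nirenberg argument: there exists $C_\epsi>0$ with $B_\epsi[v,v]=\epsilon(\|v\|_{L^2}^2+\|\Delta^k v\|_{L^2}^2)\geq C_\epsi\|v\|_{H^{2k}(\Tt^d)}^2$; boundedness of $B_\epsi$ is Cauchy--Schwarz. The linear functional $v\mapsto\langle f_2[m_1,u_1],v\rangle$ is bounded on $H^{2k}(\Tt^d)$ since $f_2[m_1,u_1]\in L^2(\Tt^d)$. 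Lax--Milgram then delivers a unique $u\in H^{2k}(\Tt^d)$ satisfying \eqref{bf5-1}.

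The only genuinely non-routine step is the coercivity in the $H^{2k}$-norm from control of $\|\cdot\|_{L^2}$ and $\|\Delta^k\cdot\|_{L^2}$ alone, which I expect to be the main (mild) obstacle; all other ingredients are direct analogues of the $k=1$ case treated in Propositions~\ref{prop35} and the subsequent results, with the Morrey embedding taking care of the regularity of the nonlinear terms $g(m_1)$ and $H(Du_1)$.
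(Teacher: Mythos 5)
Your proposal is correct and follows essentially the same route the paper takes: Morrey's embedding to place $f_1[m_1,u_1]$ and $f_2[m_1,u_1]$ in $C(\Tt^d)$ and $L^2(\Tt^d)$, then the direct method on the weakly closed convex set $H^{2k}(\Tt^d;\Rr_0^+)$ for \eqref{vp5-1} and Lax--Milgram for \eqref{bf5-1}, with Gagliardo--Nirenberg supplying the $H^{2k}$-coercivity. Your regularity claim $(m_1,u_1)\in C^2\times C^2$ is in fact slightly sharper than the $C^1\times C^1$ stated in the paper and is the cleaner thing to invoke for the divergence term in $f_2$.
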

 
Next, to address the regularized problem \eqref{mfg5-epsi} with $F_\epsilon$ given by \eqref{rp5-1}, we  define a map $A:H^{2k-1}(\Tt^d;\Rr_0^+)\times H^{2k-1}(\Tt^d)\to H^{2k-1}(\Tt^d;\Rr_0^+)\times H^{2k-1}(\Tt^d)$
by
setting, for each $(m_1,
u_1)\in H^{2k-1}(\Tt^d;\Rr_0^+)\times H^{2k-1}(\Tt^d)$,
\begin{equation*}
A
\begin{bmatrix}
      m_1 \\
      u_1
\end{bmatrix} = \begin{bmatrix}
      m_2 \\
      u_2
\end{bmatrix},
\end{equation*}
where $m_2\in H^{2k}(\Tt^d; \Rr_0^+)$ is the unique solution to  \eqref{vp5-1} and  $u_2\in
H^{2k}(\Tt^d)$ is the unique solution to \eqref{bf5-1}. As in Subsection~\ref{vpa4-1},
we want to show that \(A\) has a fixed point by using   
 Theorem~\ref{Sch4-1}. Thus, we need to establish the following result. 
 
 \begin{pro}
        \label{contcomp}
Suppose that Assumption \ref{a0} holds.
Then, the map $A$ is continuous and compact
over $H^{2k-1}(\Tt^d;\Rr_0^+) \times H^{2k-1}(\Tt^d)$. 
\end{pro}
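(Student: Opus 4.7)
The plan is to exploit the Sobolev embedding $H^{2k-1}(\Tt^d) \hookrightarrow C^2(\Tt^d)$, which holds by Morrey's theorem since $2k > \frac{d}{2}+3$. This ensures that for any $(m_1,u_1)\in H^{2k-1}(\Tt^d;\Rr_0^+)\times H^{2k-1}(\Tt^d)$, the source terms $f_1[m_1,u_1]$ and $f_2[m_1,u_1]$, which involve $V$, $g(m_1)$, $H(Du_1)$, and $\div(m_1 D_pH(Du_1))$, lie in $C(\Tt^d)\subset L^2(\Tt^d)$ with norms controlled continuously (via Assumption~\ref{a0}) by $\|(m_1,u_1)\|_{H^{2k-1}\times H^{2k-1}}$. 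All the estimates below will then be driven by this observation.

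For compactness, fix a bounded sequence $\{(m_1^n,u_1^n)\}$ in $H^{2k-1}(\Tt^d;\Rr_0^+)\times H^{2k-1}(\Tt^d)$ and write $(m_2^n,u_2^n)=A(m_1^n,u_1^n)$. Morrey's embedding gives uniform $C^2$-bounds, so $\|f_1[m_1^n,u_1^n]\|_{L^2}+\|f_2[m_1^n,u_1^n]\|_{L^2}\leq C$. Using $w=0$ as a competitor in \eqref{vp5-1} yields $J_\epsi(m_2^n)\leq 0$, which after Young's inequality gives $\|m_2^n\|_{L^2}^2+\|\Delta^k m_2^n\|_{L^2}^2\leq C_\epsi$; testing \eqref{bf5-1} with $v=u_2^n$ gives the analogous bound for $u_2^n$. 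The Gagliardo--Nirenberg interpolation inequality converts these into uniform $H^{2k}$-bounds, and the Rellich--Kondrachov theorem then extracts a subsequence convergent in $H^{2k-1}\times H^{2k-1}$.

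For continuity, assume $(m_1^n,u_1^n)\to (m_1,u_1)$ in $H^{2k-1}\times H^{2k-1}$. Morrey again promotes this to $C^2$-convergence, and since $g$, $H$, and $D_pH$ are continuous, $f_1[m_1^n,u_1^n]\to f_1[m_1,u_1]$ and $f_2[m_1^n,u_1^n]\to f_2[m_1,u_1]$ strongly in $L^2(\Tt^d)$. For the $u$-component, subtracting the two Lax--Milgram identities \eqref{bf5-1}, testing with $u_2-u_2^n\in H^{2k}(\Tt^d)$, and applying Young's inequality exactly as in \eqref{eh2} give $u_2^n\to u_2$ in $H^{2k}$, hence in $H^{2k-1}$. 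For the $m$-component, I would replicate the proof of Proposition~\ref{39}: since the admissible set $H^{2k}(\Tt^d;\Rr_0^+)$ is a convex cone, the midpoint $(m_2+m_2^n)/2$ is admissible for both minimization problems, so the strict convexity of $J_\epsi$ combined with the minimization inequalities yields
\begin{equation*}
\int_{\Tt^d}\tfrac{\epsi}{4}\big[(m_2-m_2^n)^2+(\Delta^k m_2-\Delta^k m_2^n)^2\big]\,\dx\leq \int_{\Tt^d}|m_2-m_2^n|\,|f_1[m_1,u_1]-f_1[m_1^n,u_1^n]|\,\dx.
\end{equation*}
After another Young step and Gagliardo--Nirenberg, this proves $m_2^n\to m_2$ in $H^{2k}$, and in particular in $H^{2k-1}$.

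The main technical point to check is that the non-negativity constraint does not break the continuity argument. The observation that $H^{2k}(\Tt^d;\Rr_0^+)$ is a convex cone (closed under averages) is precisely what allows the midpoint test function used in Proposition~\ref{39} to remain admissible; once this is in place, the remainder of the proof is just an assembly of the $H^{2k}$-bounds from Subsections~\ref{vpa4-1}--\ref{bfa4-2} together with Morrey's embedding and Rellich--Kondrachov.
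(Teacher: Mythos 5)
Your proof is correct and follows essentially the same route as the paper: uniform $H^{2k}$ bounds on the output of $A$ via the $w=0$ competitor in \eqref{vp5-1} and $v=u_2$ in \eqref{bf5-1}, Young plus Gagliardo--Nirenberg, then Rellich--Kondrachov for compactness; Morrey's embedding for $C^2$ control of $f_1,f_2$ and the midpoint argument of Proposition~\ref{39} for continuity. Your explicit observation that $H^{2k}(\Tt^d;\Rr_0^+)$ is convex, so the midpoint competitor $(m_2+m_2^n)/2$ remains admissible, is a detail the paper leaves implicit when it instructs the reader to ``argue as in Subsections~\ref{vpa4-1} and~\ref{bfa4-2}'', and it is worth stating since the model problem there had no non-negativity constraint.
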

\begin{proof}
To prove continuity, we take a convergent sequence $(m_n,u_n)$ in $H^{2k-1}(\Tt^d;\Rr_0^+)\times H^{2k-1}(\Tt^d)$. The maps $f_1(m_n,u_n)$ and $f_2(m_n,u_n)$ converge in $C(\Tt^d)$ and in $L^2(\Tt^d)$, 
respectively. Thus, 
it suffices to argue as in  Subsections \ref{vpa4-1} and \ref{bfa4-2}
to establish continuity. 

To address compactness, we show next that the solutions to  \eqref{vp5-1}
and   \eqref{bf5-1} are automatically in $H^{2k}$ and are bounded in terms of the $L^2$ norms of $f_1$ and $f_2$, respectively. In turn, these norms of $f_1$ and $f_2$ 
are bounded by the norms of $m_1$ and $u_1$ in $H^{2k-1}$. For the solution $m_2$ of the variational problem \eqref{vp5-1},  
we have
\[
 \int_{\Tt^d}\Big(\frac{\epsilon}{2}\big(m_2^2 + (\Delta^k
m_2)^2\big) + f_1[m_1,u_1]m_2 \Big)\,\dx\leq 0
\]
by using $w=0$ as a competitor in \eqref{vp5-1}. By the Cauchy inequality, 
\[
 \int_{\Tt^d}f_1[m_1,u_1]m_2\,\dx \leq C_\epsilon \|f_1\|_{L^2(\Tt^d)}^2+\frac \epsilon 4 \|m_2\|^2_{L^2(\Tt^d)}.
\]
Combining the two previous inequalities with the Gagliardo--Nirenberg inequality, we obtain $\|m_2\|^2_{H^{2k}(\Tt^d)}\leq C_\epsilon \|f_1\|_{L^2(\Tt^d)}^2$.
For the solution $u_2$ of \eqref{bf5-1}, we use $v=u_2$ in \eqref{bf5-1} and argue analogously to get
 $\|u_2\|^2_{H^{2k}(\Tt^d)}\leq C_\epsilon \|f_2\|_{L^2(\Tt^d)}^2$.
By the Rellich--Kondrachov theorem, we have $ H^{2k}(\Tt^d; \Rr_0^+)\times H^{2k}(\Tt^d) \subset\subset
 H^{2k-1}(\Tt^d;\Rr_0^+) \times H^{2k-1}(\Tt^d)$.  This compact embedding combined  with the preceding a priori estimates in \(H^{2k}(\Tt^d) \times H^{2k}(\Tt^d)\) on the solutions 
 allow us to conclude that  $A$ is compact
over  $H^{2k-1}(\Tt^d;\Rr_0^+) \times H^{2k-1}(\Tt^d)$.
\end{proof}

Next, we address the a priori bound needed to apply Schaeffer's fixed point theorem. 

\begin{pro}
        \label{lambdasolprop}
Suppose that Assumptions \ref{a0}--\ref{grg5-1} hold.
Fix $\lambda \in [0,1]$ and assume that $(m_\lambda,u_\lambda)$
satisfies
\begin{equation}
        \label{lambdasol}
\begin{bmatrix}
      m_\lambda \\
      u_\lambda
\end{bmatrix}
=
\lambda A
\begin{bmatrix}
      m_\lambda \\
      u_\lambda
\end{bmatrix}.
\end{equation}
Then, $(m_\lambda, u_\lambda)$ is uniformly bounded in $H^{2k}(
\Tt^d)\times H^{2k}(\Tt^d)$
with respect
to $\lambda$. 
\end{pro}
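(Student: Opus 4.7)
For $\lambda=0$ the relation $(m_\lambda,u_\lambda)=\lambda A(m_\lambda,u_\lambda)$ forces $(m_\lambda,u_\lambda)=(0,0)$, so assume $\lambda\in(0,1]$. The plan is to derive an MFG-type energy identity by multiplying the HJ-component of the Euler--Lagrange system by $m_\lambda$ and the FP-component by $u_\lambda$ and then summing; and to close the estimate by controlling the single sign-indefinite term $\lambda\int\phi u_\lambda$ via a carefully chosen admissible variation in the variational inequality for $m_\lambda$.

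Since $(m_\lambda/\lambda,u_\lambda/\lambda)=A(m_\lambda,u_\lambda)$, the non-negative function $m_\lambda/\lambda$ minimizes $J_\epsi$ with data $(m_\lambda,u_\lambda)$. The variations $w=(1\pm t)m_\lambda/\lambda$ remain in $H^{2k}(\Tt^d;\Rr_0^+)$ for small $t>0$, so the first-order optimality condition taken in both directions yields, after rescaling by $\lambda^2$,
\[
\epsilon\int_{\Tt^d}\big(m_\lambda^2+(\Delta^k m_\lambda)^2\big)\,\dx=\lambda\int_{\Tt^d}m_\lambda\big(u_\lambda+H(Du_\lambda)-g(m_\lambda)+V\big)\,\dx.
\]
Testing \eqref{bf5-1} with $v=u_\lambda$ and rescaling by $\lambda$ gives, after integrating by parts the divergence term,
\[
\epsilon\int_{\Tt^d}\big(u_\lambda^2+(\Delta^k u_\lambda)^2\big)\,\dx=\lambda\int_{\Tt^d}\big(-m_\lambda u_\lambda-m_\lambda D_pH(Du_\lambda)\cdot Du_\lambda+\phi u_\lambda\big)\,\dx.
\]
Adding the two identities, the $m_\lambda u_\lambda$ terms cancel; Assumption~\ref{grH5-1} bounds the $H$-contribution by $-\tfrac{\lambda}{C}\int m_\lambda|Du_\lambda|^\alpha+C\lambda\int m_\lambda$, and Assumption~\ref{grg5-1} absorbs $\lambda\int(C+V)m_\lambda$ into a small fraction of $\lambda\int m_\lambda g(m_\lambda)$, producing the energy estimate
\[
\epsilon\big(\|m_\lambda\|_{L^2}^2+\|\Delta^k m_\lambda\|_{L^2}^2+\|u_\lambda\|_{L^2}^2+\|\Delta^k u_\lambda\|_{L^2}^2\big)+\tfrac{\lambda}{C}\int m_\lambda|Du_\lambda|^\alpha+\tfrac{\lambda}{2}\int m_\lambda g(m_\lambda)\leq C\lambda+\lambda\int\phi u_\lambda.
\]

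The remaining task is to control the indefinite cross term $\lambda\int\phi u_\lambda$, which is the hard point of the argument: the monotonicity identity alone does not provide an upper bound on it, and because the constrained minimizer only satisfies a variational \emph{inequality}, the available admissible variations in the $m_\lambda$-equation are restricted. The key observation is that the non-negativity of $\phi$ from Assumption~\ref{a0} combined with $m_\lambda/\lambda\geq 0$ makes $w=m_\lambda/\lambda+\phi$ an admissible variation. Using it in the variational inequality at $m_\lambda/\lambda$ and rescaling by $\lambda$ gives
\[
\epsilon\int_{\Tt^d}\big(m_\lambda\phi+\Delta^k m_\lambda\,\Delta^k\phi\big)\,\dx+\lambda\int_{\Tt^d}\big(-u_\lambda-H(Du_\lambda)+g(m_\lambda)-V\big)\phi\,\dx\geq 0,
\]
from which $\lambda\int\phi u_\lambda$ is bounded above. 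Cauchy--Schwarz and Young split the $\epsilon$-integral into a fraction of $\epsilon(\|m_\lambda\|_{L^2}^2+\|\Delta^k m_\lambda\|_{L^2}^2)$ (absorbable in the energy estimate) plus a constant depending on $\epsilon$ and $\phi$; the term $-\lambda\int H(Du_\lambda)\phi$ is controlled by $C\lambda$ via the lower bound on $H$ from Assumption~\ref{grH5-2}; and $\lambda\int g(m_\lambda)\phi\leq\lambda\|\phi\|_\infty\big(\delta'\int m_\lambda g(m_\lambda)+C_{\delta'}\big)$ by Assumption~\ref{grg5-1}, absorbable for $\delta'$ small.

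Putting everything together yields $\epsilon\big(\|m_\lambda\|_{L^2}^2+\|\Delta^k m_\lambda\|_{L^2}^2+\|u_\lambda\|_{L^2}^2+\|\Delta^k u_\lambda\|_{L^2}^2\big)\leq C$ for a constant $C$ independent of $\lambda$ (depending on $\epsilon$, $\phi$, $V$, and the structural constants in Assumptions~\ref{grH5-1}--\ref{grg5-1}). The Gagliardo--Nirenberg interpolation inequality, already invoked elsewhere in the text, then upgrades this to the desired uniform bound in $H^{2k}(\Tt^d)\times H^{2k}(\Tt^d)$.
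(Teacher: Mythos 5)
Your proof is correct and follows the same basic strategy as the paper: test the variational inequality for $m_\lambda/\lambda$ and the bilinear identity for $u_\lambda/\lambda$, add, use Assumptions~\ref{grH5-1} and~\ref{grg5-1} to extract the coercive terms, and control the leftover via the structure of $\phi$. What you do more carefully than the paper's one-line description is the treatment of the indefinite cross term $\lambda\int_{\Tt^d}\phi\,u_\lambda\,\dx$. The paper's proof states only ``taking $w=0$ in \eqref{vi5-1} and $v=u_\lambda$ in \eqref{bf5-1l}''; with $w=0$ alone that cross term remains uncontrolled with an $\epsilon$-independent constant, and the $\phi|Du_\lambda|^\alpha$ contribution in the stated bound \eqref{apriori5-1} cannot appear (indeed \eqref{apriori5-1} visibly uses Assumption~\ref{grH5-2} on $\phi H(Du_\lambda)$, which never enters with $w=0$). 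Your additional admissible variation $w=m_\lambda/\lambda+\phi$ is exactly what supplies the missing term: after rescaling it gives $\lambda\int\phi\,u_\lambda \leq \epsilon\int(m_\lambda\phi+\Delta^k m_\lambda\Delta^k\phi) - \lambda\int H(Du_\lambda)\phi + \lambda\int g(m_\lambda)\phi - \lambda\int V\phi$, whence Assumption~\ref{grH5-2} produces the $-\tfrac{\lambda}{C}\int\phi|Du_\lambda|^\alpha$ contribution and Assumption~\ref{grg5-1} absorbs $\lambda\int g(m_\lambda)\phi$; this reproduces \eqref{apriori5-1} with the advertised $\epsilon$- and $\lambda$-independent constant. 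So your argument is not a different route but a faithful and needed completion of the one the paper sketches. One small caveat worth flagging, which you inherit from the paper: for $w=m_\lambda/\lambda+\phi$ to lie in $H^{2k}(\Tt^d;\Rr_0^+)$, and for the term $\epsilon\int\Delta^k m_\lambda\,\Delta^k\phi\,\dx$ to make sense, $\phi$ must be at least $H^{2k}(\Tt^d)$, which is stronger than the mere continuity stated in Assumption~\ref{a0}; the paper uses $\Delta^k\phi$ in \eqref{eq:forapriori} as well, so this is an implicit regularity assumption in the source, not a gap specific to your proof.
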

\begin{proof}
If $\lambda=0$, then $(m_\lambda,u_\lambda)\equiv 0$. If $\lambda \in (0,1]$,
then $\frac{m_\lambda}{\lambda}$ solves \eqref{vp5-1} with $f_1[m_1, u_1]$
replaced by $f_1[m_\lambda, u_\lambda]$. Consequently,
for all $w\in H^{2k}(\Tt^d; \Rr_0^+)$,
we have\begin{equation}\label{vi5-1}
\int_{\Tt^d} \Big(\epsilon \big(m_\lambda(w - m_\lambda)+ \Delta^k m_\lambda
\Delta^{k}(w- m_\lambda) \big) + \lambda f_1[m_\lambda, u_\lambda](w-m_\lambda)\Big)\,\dx
\geq 0.
\end{equation}

Similarly, $\frac{u_\lambda}{\lambda}$ solves \eqref{bf5-1} with $f_2[m_1, u_1]$
replaced by $f_2[m_\lambda, u_\lambda]$; that is, 
\begin{equation}\label{bf5-1l}
        B_\epsi[u_\lambda, v]=\lambda\left\langle f_2[m_\lambda, u_\lambda] , v\right\rangle \ \ \mbox{for all } v\in H^{2k}(\Tt^d).
\end{equation}

Taking $w=0$ in \eqref{vi5-1} and  $v=u_\lambda$ in \eqref{bf5-1l} 
and combining the resulting estimates with the convexity of $H$, Assumptions~\ref{grH5-1},
\ref{grH5-2}, and \ref{grg5-1}, and the monotonicity of the system, we obtain
the bound
\begin{equation}\label{apriori5-1}
\lambda \int_{\Tt^d} \big(m_\lambda g(m_\lambda) + m_\lambda|Du_\lambda|^\alpha+\phi |Du_\lambda|^\alpha\big)\,\dx
+
\epsilon\int_{\Tt^d} \big(m_\lambda^2 + (\Delta^k m_\lambda)^2 + u_\lambda^2
+ (\Delta^k u_\lambda)^2\big)\,\dx
\leq C,
\end{equation}
where $C>0$ is independent of $\lambda$ and $\epsilon$. Because $\phi$ and $g$ are non-negative by Assumption \ref{a0},
the left-hand side of \eqref{apriori5-1} is non-negative. Therefore,  the Gagliardo--Nirenberg
inequality yields uniform $H^{2k}$ bounds for $(m_\lambda, u_\lambda)$ with
respect to $\lambda$.
\end{proof}

\begin{corollary}
        \label{cor49}
Suppose that Assumptions \ref{a0}--\ref{grg5-1} hold.
Then, there exists $(m_\epsi,u_\epsi)\in H^{2k}(\Tt^d;\Rr^+_0)\times H^{2k}(\Tt^d)$
satisfying 
\begin{equation}\label{eq:wsepsi}
        \begin{bmatrix}
                m_\epsi \\
                u_\epsi
        \end{bmatrix}
        =
        A
        \begin{bmatrix}
                m_\epsi \\
                u_\epsi
        \end{bmatrix}
\end{equation}
and the bound
\begin{equation}
\label{thebound}
\int_{\Tt^d} \big(m_\epsi g(m_\epsi) + m_\epsi|Du_\epsi|^\alpha+\phi |Du_\lambda|^\alpha\big)\,\dx
+
\epsilon\int_{\Tt^d} \big(m_\epsi^2 + (\Delta^k m_\epsi)^2 + u_\epsi^2
+ (\Delta^k u_\epsi)^2\big)\,\dx
\leq C, 
\end{equation}
where $C$ is independent of $\epsilon$.
In particular, $(m_\epsilon, u_\epsilon)$ is a weak solution to \eqref{mfg5-epsi}. \end{corollary}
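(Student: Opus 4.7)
The plan is to apply Schaefer's fixed-point theorem (Theorem~\ref{Sch4-1}) to the operator $A$ on the closed convex set $\mathcal{X}=H^{2k-1}(\Tt^d;\Rr_0^+)\times H^{2k-1}(\Tt^d)$, which contains $0$ and into which $A$ maps (the first component of $A[m_1,u_1]$ lies in $H^{2k}(\Tt^d;\Rr_0^+)$ by construction of the variational problem \eqref{vp5-1}). The continuity and compactness of $A$ over $\mathcal{X}$ are exactly the content of Proposition~\ref{contcomp}. The remaining hypothesis of Theorem~\ref{Sch4-1} is the boundedness of the set $\{(m_\lambda,u_\lambda)\in \mathcal{X}:(m_\lambda,u_\lambda)=\lambda A[m_\lambda,u_\lambda]\text{ for some }\lambda\in[0,1]\}$, which is provided by Proposition~\ref{lambdasolprop} (indeed, that result gives a uniform $H^{2k}$ bound, stronger than what is needed in $H^{2k-1}$).

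Consequently, Theorem~\ref{Sch4-1} yields a fixed point $(m_\epsi,u_\epsi)\in \mathcal{X}$ satisfying \eqref{eq:wsepsi}. Since $A$ takes values in $H^{2k}(\Tt^d;\Rr_0^+)\times H^{2k}(\Tt^d)$, the fixed point automatically sits in that higher-regularity space; in particular, $m_\epsi\geq 0$ a.e. Moreover, applying Proposition~\ref{lambdasolprop} with $\lambda=1$ at the fixed point directly gives estimate \eqref{apriori5-1} with $(m_\lambda,u_\lambda)$ replaced by $(m_\epsi,u_\epsi)$, which is precisely the bound \eqref{thebound} — and the constant is independent of $\epsi$, as the right-hand side of \eqref{apriori5-1} is.

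It remains to identify $(m_\epsi,u_\epsi)$ as a weak solution to \eqref{mfg5-epsi}. By definition of $A$, the fixed-point identity means that $m_\epsi$ minimizes $J_\epsi$ over $H^{2k}(\Tt^d;\Rr_0^+)$ with $f_1[m_\epsi,u_\epsi]$ frozen, while $u_\epsi$ satisfies the bilinear identity $B_\epsi[u_\epsi,v]=\langle f_2[m_\epsi,u_\epsi],v\rangle$ for all $v\in H^{2k}(\Tt^d)$. The latter is precisely the weak form of the second component of $F_\epsi[m_\epsi,u_\epsi]=0$. The former, via a first-variation argument along the convex cone of non-negative test functions (as in Proposition~\ref{36}), translates into the variational inequality
\begin{equation*}
\int_{\Tt^d}\!\Big(\epsi\big(m_\epsi(w-m_\epsi)+\Delta^k m_\epsi\,\Delta^k(w-m_\epsi)\big)+f_1[m_\epsi,u_\epsi](w-m_\epsi)\Big)\dx\geq 0
\end{equation*}
for all $w\in H^{2k}(\Tt^d;\Rr_0^+)$, which together with the bilinear identity encodes the weak formulation of $F_\epsi[m_\epsi,u_\epsi]=0$ compatible with the convention in Remark~\ref{rmk:notation}.

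The main technical point to watch is that the variational problem for $m$ is posed over the cone of non-negative functions, so the first-order condition is an inequality rather than an equality. This is harmless at the level of \eqref{mfg5-epsi} because in the subsequent analysis (Minty's method) one tests against $w\geq 0$ only; however, it is why the penalization approach in \eqref{crp5-1x} becomes preferable when one needs a genuine PDE rather than a variational inequality. All other steps are routine assemblies of Propositions~\ref{contcomp} and \ref{lambdasolprop} with Theorem~\ref{Sch4-1}.
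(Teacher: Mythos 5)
Your proof is correct and follows the paper's argument exactly: Schaefer's fixed-point theorem applied via Propositions~\ref{contcomp} and \ref{lambdasolprop}, with the bound read off from \eqref{apriori5-1} at $\lambda=1$ and the weak-solution property inferred from the variational and bilinear-form characterizations of $A$. Your closing remark that the first component of the fixed-point identity is a genuine variational inequality (not an equality) over the cone of non-negative functions is precisely why the paper states \eqref{solmp5-1} as an inequality, and it is good that you flagged it.
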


\begin{proof}
By Proposition \ref{contcomp}, the map $A$ is continuous and compact
over $H^{2k-1}(\Tt^d;\Rr_0^+) \times H^{2k-1}(\Tt^d)$.  Further, by Proposition  \ref{lambdasolprop}, we have uniform bounds in $H^{2k-1}(\Tt^d;\Rr_0^+) \times H^{2k-1}(\Tt^d)$ for solutions
to \eqref{lambdasol}.   
 Consequently, for each \(\epsi\in(0,1)\), we can apply
Theorem~\ref{Sch4-1} to obtain a fixed point, $(m_\epsi,u_\epsi)$, with \(m_\epsi\geq 0\), satisfying \eqref{eq:wsepsi}.
Moreover, 
by the preceding proposition, $(m_\epsi,u_\epsi)\in H^{2k}(\Tt^d)\times H^{2k}(\Tt^d)$ and using \eqref{apriori5-1} with $\lambda=1$ gives \eqref{thebound}.

From the above, it follows that $(m_\epsilon, u_\epsilon)$ is a solution to the variational inequality associated with \eqref{mfg5-epsi}; that is, for all 
$(w,v)\in H^{2k}(\Tt^d;\Rr_0^+)\times
H^{2k}(\Tt^d)$, we have
\begin{align}\label{solmp5-1}
        \left\langle 
        F_\epsilon
        \begin{bmatrix}
                m_\epsi \\
                u_\epsi
        \end{bmatrix}
        ,
        \begin{bmatrix}
                m_\epsi \\
                u_\epsi
        \end{bmatrix}
        -  \begin{bmatrix}
                w \\
                v
        \end{bmatrix}
        \right\rangle 
        \leq 0,
\end{align}
which concludes the proof.
\end{proof}

\begin{remark}
\label{mcon}
By choosing $w=0$ and $w=2 m_\epsilon$ in \eqref{vi5-1} for $\lambda=1$,
we conclude that 
\begin{equation*}
        %\label{mhj}
                \int_{\Tt^d} \Big(\epsilon \big(m_\epsilon^2+ (\Delta^k m_\epsilon
\big)^2 + f_1[m_\epsilon, u_\epsilon]m_\epsilon\Big)\,\dx
        = 0.
        \end{equation*}
The preceding identity combined with \eqref{thebound} and   Assumptions~\ref{grH5-2} and~\ref{wcml1} yields
\begin{equation}
\label{eq:umabove}
\begin{aligned}
\int_{\Tt^d} u_\epsi m_\epsi\,\dx \leq C
\end{aligned}
\end{equation}
for some positive constant \(C\) independent of \(\epsi\). Moreover, under condition \eqref{eq:Hfromabove} in Assumption~\ref{asup}, we further obtain that %
\begin{equation}
\label{eq:umbelow}
\begin{aligned}
-\int_{\Tt^d} u_\epsi m_\epsi\,\dx \leq C,
\end{aligned}
\end{equation}
where \(C\) does not depend on \(\epsi\).

On the other hand, by choosing $v=1$ in \eqref{bf5-1l} for $\lambda=1$, we get 
\[
\int_{\Tt^d} m_{\epsilon}\,\dx=\int_{\Tt^d} \phi\,\dx-\epsilon \int_{\Tt^d} u_\epsilon\,\dx.
\]
Because of \eqref{thebound}, we have that $\epsilon \int_{\Tt^d} u_\epsilon\,\dx=O(\sqrt\epsilon)$.
Therefore, taking into account that $\int_{\Tt^d} \phi\,\dx=1$, we conclude
that
\begin{equation}
\label{eq:meanmepsi}
\begin{aligned}
\lim_{\epsilon\to0}\int_{\Tt^d} m_{\epsilon}\,\dx= 1.
\end{aligned}
\end{equation}
\end{remark}

Next, we prove Theorem~\ref{mt1} under the additional  Assumption \ref{asup}; that is, that there exists a weak solution,  \((m,u)
\in L^{q'}(\Tt^d;\Rr^+_0)\times W^{1,\alpha}(\Tt^d)\) with \(\int_{\Tt^d} m\,\dx =1 \),  to \eqref{mp}    provided that Assumptions~\ref{a0}--\ref{asup} hold.

\begin{proof}[Proof of Theorem~\ref{mt1} under Assumption \ref{asup}]
Let $(m_\epsilon, u_\epsilon)\in H^{2k}(\Tt^d)\times H^{2k}(\Tt^d),$ with
\(m_\epsi\geq 0\), be the
weak solution to \eqref{eq:wsepsi} given by Corollary \ref{cor49}.
We first establish some compactness properties of \((m_\epsilon, u_\epsilon)\).

From  \eqref{thebound},  Assumption~\ref{wcml1}, the condition \(m_\epsi\geq 0\), 
and extracting a subsequence if necessary, we can find $m\in L^1(\Tt^d)$
such that
\(m\geq 0\) and\begin{equation*}%\label{wcm5}
m_\epsilon \rightharpoonup m \mbox{ in }L^1(\Tt^d). 
\end{equation*}
Moreover, assuming in addition Assumption \ref{asup}, 
\begin{equation}\label{wcm5B}
        m_\epsilon \rightharpoonup m \mbox{ in }L^{q'}(\Tt^d). 
\end{equation}

By combining Assumption \ref{a0} with \eqref{thebound}, we obtain further that 
\begin{equation}\label{ubdu5-1}
\int_{\Tt^d}|Du_\epsi|^\alpha\,\dx\leq C,
\end{equation}
where $C>0$ is uniformly bounded with respect to $\epsilon$.
Define 
\begin{equation*}
\tilde u_\epsilon
=
u_\epsilon - \int_{\Tt^d}u_\epsilon \,\dx.
\end{equation*}
Then, \eqref{ubdu5-1} and the Poincar\'{e}--Wirtinger inequality yield
\begin{equation*}
\|\tilde u_\epsilon\|_{L^\alpha(\Tt^d)}^\alpha
\leq
C\|D\tilde u_\epsilon\|_{L^\alpha(\Tt^d)}^\alpha
\leq 
C.
\end{equation*}
Furthermore, recalling Assumption \ref{asup},  $\tilde u_\epsilon$ converges, up to a subsequence, strongly in $L^q(\Tt^d)$ to some function $\tilde u$. 
Thus, using \eqref{wcm5B},%
\begin{equation}
\label{eq:limtum}
\begin{aligned}
\lim_{\epsi\to0}\int_{\Tt^d} \tilde u_\epsilon m_\epsilon\,\dx = \int_{\Tt^d} \tilde u m\,\dx.
\end{aligned}
\end{equation}
Additionally, by \eqref{eq:umabove} and \eqref{eq:umbelow}, we have that \[
\left|\int_{\Tt^d} u_\epsilon m_\epsilon \,\dx \right|
\]
is bounded uniformly in $\epsilon$. This uniform bound, the identity 
\[
\int_{\Tt^d} u_\epsilon m_\epsilon\,\dx=\left(\int_{\Tt^d} u_\epsilon\,\dx \right)\left(\int_{\Tt^d} m_\epsilon\,\dx \right)+\int_{\Tt^d} \tilde u_\epsilon m_\epsilon\,\dx,
\]
\eqref{eq:meanmepsi}, and \eqref{eq:limtum} imply that $\int_{\Tt^d} u_\epsilon $ converges, up to a subsequence. 
Therefore, $u_\epsilon$ converges to some function $u$ in $L^q(\Tt^d)$ and weakly in  $W^{1,\alpha}(\Tt^d)$.

Next, given \((w,v)\in H^{2k}(\Tt^d; \Rr_0^+)\times H^{2k}(\Tt^d)\), we observe that  \eqref{solmp5-1} and the monotonicity of $F_\epsilon$ (see Remark~\ref{rmk:notation})
 yield
\begin{equation}\label{eq5-1-1}
0\geq
\left\langle 
F_\epsilon
  \begin{bmatrix}
      m_\epsilon \\
      u_\epsilon
  \end{bmatrix}
,
  \begin{bmatrix}
      m_\epsilon \\
      u_\epsilon
  \end{bmatrix}
-  \begin{bmatrix}
      w \\
      v
  \end{bmatrix}
\right\rangle 
\geq
\left\langle 
F_\epsilon
  \begin{bmatrix}
      w \\
      v
  \end{bmatrix}
,
  \begin{bmatrix}
      m_\epsilon \\
      u_\epsilon
  \end{bmatrix}
-  \begin{bmatrix}
      w \\
      v
  \end{bmatrix}
\right\rangle 
=
\left\langle 
F
  \begin{bmatrix}
      w \\
      v
  \end{bmatrix}
,
  \begin{bmatrix}
      m_\epsilon \\
      u_\epsilon
  \end{bmatrix}
-  \begin{bmatrix}
      w \\
      v
  \end{bmatrix}
\right\rangle 
+
h_\epsilon,
\end{equation}
where
\begin{equation*}
h_\epsilon
=
\epsilon\int_{\Tt^d}\Big( w(m_\epsilon -w) +\Delta^k w\Delta^{k} (m_\epsilon
-w) 
+
 v(u_\epsilon -v) +  \Delta^k v\Delta^k (u_\epsilon -v)\Delta^k (u_\epsilon
-v)\Big)\,\dx.
\end{equation*}
From  \eqref{thebound}, we have uniform $L^2$-bounds
for $(\sqrt{\epsilon} m_\epsilon, \sqrt{\epsilon} \Delta^k m_\epsilon, \sqrt{\epsilon}
u_\epsilon,\sqrt{\epsilon}\Delta^k u_\epsilon)$; thus, using  H\"older's
inequality, we have
\begin{equation*}%\label{eq:hepsi}
\lim_{\epsilon \to 0}h_\epsilon=0.
\end{equation*}

Consequently, taking the limit in \eqref{eq5-1-1} as \(\epsi\to0\), we conclude
that
\begin{align*}
0\geq 
\left\langle 
F
  \begin{bmatrix}
      w \\
      v
  \end{bmatrix}
,
  \begin{bmatrix}
      m \\
      \tilde u
  \end{bmatrix}
-  \begin{bmatrix}
      w \\
      v
  \end{bmatrix}
\right\rangle 
\end{align*}
for all $(w,v)\in H^{2k}(\Tt^d; \Rr_0^+)\times H^{2k}(\Tt^d)$. Thus, \((m,u) \in L^{q'}(\Tt^d)\times W^{1,\alpha}(\Tt^d)\) is
 a weak solution to \eqref{mp} with \(\int_{\Tt^d} m\,\dx =1 \) by \eqref{eq:meanmepsi} and \eqref{wcm5B}.  
\end{proof}

\begin{remark}
        \label{412}
Without assuming Assumption \ref{asup},  we use the arguments in the preceding proof to show  the existence of a weak solution 
in a slightly different sense, under the assumptions of Theorem~\ref{mt1}. Precisely, there exists $(m, \tilde u)\in L^1(\Tt^d; \Rr_0^+)\times W^{1, \alpha}(\Tt^d)$ such that 
\eqref{ws}      holds for
 all $(w,v)\in \big\{w\in H^{2k}(\Tt^d; \Rr_0^+)| \, \int_{\Tt^d}w\,\dx=1\big\}\times H^{2k}(\Tt^d)$.
 
 Indeed, 
recalling that $f_2[w,v]=-w +\div(wD_p H(D v))-\phi$,  we have
 \begin{align*}
        \left\langle -f_2[w,v], u_\epsilon\right\rangle 
        &=
        \left\langle -f_2[w,v], \tilde u_\epsilon\right\rangle 
        +
        \int_{\Tt^d}u_\epsilon\,\dx \int_{\Tt^d}\big(w - \phi\big)\,\dx=
        \left\langle -f_2[w,v], \tilde u_\epsilon\right\rangle .
\end{align*}
Consequently, 
\begin{align*}
        \left\langle 
        F
        \begin{bmatrix}
                w \\
                v
        \end{bmatrix}
        ,
        \begin{bmatrix}
                m_\epsilon \\
                u_\epsilon
        \end{bmatrix}
        -  \begin{bmatrix}
                w \\
                v
        \end{bmatrix}
        \right\rangle 
        =
        \left\langle 
        F
        \begin{bmatrix}
                w \\
                v
        \end{bmatrix}
        ,
        \begin{bmatrix}
                m_\epsilon \\
                \tilde u_\epsilon
        \end{bmatrix}
        -  \begin{bmatrix}
                w \\
                v
        \end{bmatrix}
        \right\rangle .
\end{align*}
It suffices then to pass to the limit in the preceding expression.

\end{remark}

%%%%%%%%%%%%%
%%%%%%%%%%%%%%%
%%%%%%%%%%%%%%

\subsection{Continuity method}\label{cm5}
Here, we use the continuity method to prove Theorem~\ref{mt1}.  
For that, we introduce the following \(\lambda\)-dependent regularized MFG:
\begin{equation}
\label{crp5-1}
\begin{aligned}
F_\epsilon^\lambda
  \begin{bmatrix}
      m \\
      u
  \end{bmatrix} = \begin{bmatrix}
      0 \\
      0
  \end{bmatrix},
\end{aligned}
\end{equation}
where $\lambda\in [0,1]$ and, for \((m,u) \in C^{4k}(\Tt^d) \times C^{4k}(\Tt^d) \) with \(m> 0\),
\begin{equation}\label{crp5-1a}
F_\epsilon^\lambda
  \begin{bmatrix}
      m \\
      u
  \end{bmatrix} =
\begin{bmatrix}- u-\lambda H(Du) + \lambda g(m) - \lambda V + p_\epsilon(m)+ \epsilon(m+\Delta^{2k}m )\\
m-\lambda\div\big(m D_p H(Du)\big) -\lambda \phi - (1-\lambda) + \epsilon(u+ \Delta^{2k} u) 
\end{bmatrix}.
\end{equation}

We want to show that there exists a solution to the preceding problem  when \(\lambda=1\),
which corresponds to the regularized MFG \eqref{mfg5-epsi} 
with $F_\epsilon$ given by \eqref{crp5-1x}.
 To this end, we set
\begin{equation}\label{defL5}
\begin{aligned}
\Lambda
=
\big\{\lambda \in [0,1]\ |\ 
&\mbox{problem \eqref{crp5-1} admits a smooth a solution, }(m_\lambda,u_\lambda)\in C^\infty(\Tt^d)\times C^\infty(\Tt^d), \\
& \mbox{with } \min_{\Tt^d} m_\lambda >0 
\big\},
\end{aligned}
\end{equation}
and start by proving that \(\Lambda\) is a nonempty set.

Fix $\epsilon \in (0,1)$ and let $\lambda=0$. For $c\in [0,\frac{1}{\epsilon})$, define
\begin{equation*}
i(c)=
-c+p_\epsilon(1-\epsilon c) +\epsilon(1-\epsilon c).
\end{equation*}
We have $i\in C([0,\frac{1}{\epsilon}))$,  $i(0)=\epsilon>0$, and $\lim_{c\to (\frac{1}{\epsilon})^-}i(c)=-\infty$.  Hence, there is $c^\ast\in [0,\frac{1}{\epsilon})$ such that $i(c^\ast)=0$. Therefore,
$(m, u)=(1-\epsilon c^\ast,c^\ast)$ is a solution to \eqref{crp5-1} with $\lambda=0$; note further  that $m=1-\epsilon c^\ast> 0$. Hence,  
\begin{equation}
\label{eq:notempty}
\begin{aligned}
\Lambda\neq \emptyset.
\end{aligned}
\end{equation}

Next, our goal is to prove that $\Lambda$ is both relatively open and closed, from which we deduce that
\(\Lambda=[0,1]\). In particular, there exists a solution for \(\lambda=1\) as stated in the next proposition.

\begin{pro}
\label{asol}
{
        Under Assumptions~\ref{a0}--\ref{grg5-1}},  there exists a smooth solution to \eqref{mfg5-epsi} with $F_\epsilon$  given by \eqref{crp5-1x}.
\end{pro}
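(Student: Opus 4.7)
The plan is to prove Proposition~\ref{asol} by the continuity method, showing that the set $\Lambda$ in \eqref{defL5} is both relatively open and relatively closed in $[0,1]$. Since $\Lambda$ is nonempty by \eqref{eq:notempty} and $[0,1]$ is connected, we obtain $\Lambda=[0,1]$, and in particular $1\in\Lambda$, which gives the desired smooth positive solution to \eqref{mfg5-epsi} with $F_\epsilon$ defined by \eqref{crp5-1x}.

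For relative openness, I would fix $\lambda_0\in\Lambda$ with smooth solution $(m_{\lambda_0},u_{\lambda_0})$ satisfying $\min_{\Tt^d} m_{\lambda_0}>0$ and apply the implicit function theorem to the map $(\lambda,m,u)\mapsto F_\epsilon^\lambda[m,u]$ between appropriate Sobolev spaces (e.g., $H^{4k}\times H^{4k}\to L^2\times L^2$). The key step is to show that the Fréchet derivative $\Ll_{\lambda_0}$ with respect to $(m,u)$ at $(m_{\lambda_0},u_{\lambda_0})$ is an isomorphism. This linearization is a coupled $4k$-th order system whose top-order part is $\epsilon(\cdot+\Delta^{2k}\cdot)$ in each component. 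Testing $\Ll_{\lambda_0}[\delta m,\delta u]$ against $(\delta m,\delta u)$ and arguing as in \eqref{A1mono}, the convexity of $H$, the monotonicity of $g$, the monotonicity of $p_\epsilon$ (giving $p_\epsilon'(m_{\lambda_0})\geq 0$), and the $\epsilon$-regularization together yield a lower bound
\begin{equation*}
\langle \Ll_{\lambda_0}[\delta m,\delta u],(\delta m,\delta u)\rangle \geq \epsilon C_1\bigl(\|\delta m\|_{H^{2k}}^2+\|\delta u\|_{H^{2k}}^2\bigr).
\end{equation*}
Combined with a Lax--Milgram argument on the corresponding bilinear form (as in Subsection~\ref{bfa4-2}) and standard elliptic bootstrap using that $m_{\lambda_0}$ is smooth and strictly positive, this shows $\Ll_{\lambda_0}$ is an isomorphism.

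For relative closedness, let $\{\lambda_n\}\subset\Lambda$ with $\lambda_n\to\lambda$ and let $(m_n,u_n)$ be the corresponding smooth positive solutions. Two things are needed. First, by testing \eqref{crp5-1} against $(m_n,u_n)$ and using the monotonicity of $F_\epsilon^\lambda$ (Remark~\ref{remfemon}), together with Assumptions~\ref{grH5-1}--\ref{grg5-1} and the fact that $p_\epsilon\leq 0$, we reproduce an a priori estimate analogous to \eqref{apriori5-1}, giving uniform $H^{2k}$-bounds on $(m_n,u_n)$. Since $2k>d/2+3$, Morrey embedding provides uniform $C^3$-bounds, and elliptic bootstrap on $\epsilon(\cdot+\Delta^{2k}\cdot)=(\text{lower order})$ yields $C^\infty$-bounds independent of $n$. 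Second, and this is the main obstacle, we need a uniform lower bound $\min_{\Tt^d} m_n\geq c>0$ in order to preserve the strict positivity condition in $\Lambda$ and to pass to the limit in terms involving $\div(m_n D_p H(Du_n))$ without degeneracy. This is where the penalty term $p_\epsilon$ does essential work: evaluating the first equation of \eqref{crp5-1} at a minimum point $x_n^\ast$ of $m_n$ gives
\begin{equation*}
p_\epsilon(m_n(x_n^\ast))=u_n(x_n^\ast)+\lambda_n H(Du_n(x_n^\ast))-\lambda_n g(m_n(x_n^\ast))+\lambda_n V(x_n^\ast)-\epsilon\bigl(m_n(x_n^\ast)+\Delta^{2k}m_n(x_n^\ast)\bigr).
\end{equation*}
By the uniform $C^\infty$-estimates, the right-hand side is uniformly bounded, so $p_\epsilon(m_n(x_n^\ast))$ is bounded from below. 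Since $p_\epsilon(t)\to-\infty$ as $t\to 0^+$ by \eqref{eq:defpen}, this forces $m_n(x_n^\ast)\geq c_\epsilon>0$ for a constant independent of $n$.

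With both the $C^\infty$-estimates and the uniform positivity in hand, Arzelà--Ascoli furnishes a subsequence converging in every $C^\ell$ to some smooth $(m_\lambda,u_\lambda)$ with $\min m_\lambda\geq c_\epsilon>0$, and passing to the limit in \eqref{crp5-1} shows that $(m_\lambda,u_\lambda)$ solves the system at level $\lambda$, hence $\lambda\in\Lambda$. The main obstacle throughout is the uniform lower bound on $m_n$; without the penalty $p_\epsilon$, degeneracy of the second equation at zeros of $m$ would prevent both the a priori estimates and the preservation of strict positivity along $\lambda\to\lambda_0$.
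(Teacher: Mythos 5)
Your overall strategy is the same as the paper's: the continuity method, showing $\Lambda$ in \eqref{defL5} is nonempty, relatively open, and relatively closed, with openness proved via the implicit function theorem plus a Lax--Milgram argument (this matches Lemma~\ref{lem:open} closely, up to the choice of $H^{4k}$ versus $C^{4k}$ function spaces). The gap is in the closedness step, specifically in the uniform lower bound for $m_n$.

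You propose to get $\min_{\Tt^d} m_n\geq c_\epsilon>0$ by evaluating the first regularized equation at a minimum point $x_n^\ast$ of $m_n$ and bounding $\epsilon\Delta^{2k}m_n(x_n^\ast)$ using ``uniform $C^\infty$-estimates.'' But those estimates cannot be obtained before the lower bound is known: the elliptic bootstrap on $\epsilon(I+\Delta^{2k})m_n = (\text{RHS})$ requires H\"older control of the RHS, which contains $-p_\epsilon(m_n)$, and $p_\epsilon(t)\to-\infty$ as $t\to0^+$ by \eqref{eq:defpen}. If $\min m_n$ is not already bounded away from zero, $\|p_\epsilon(m_n)\|_{L^\infty}$ (let alone $C^{0,l}$) may blow up, and the bootstrap gives no $n$-uniform bound on $\Delta^{2k}m_n$. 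The argument is therefore circular: it uses $C^\infty$ bounds to deduce the lower bound, while the $C^\infty$ bounds presuppose the lower bound. Moreover, for $k\geq 1$ the operator $\Delta^{2k}$ admits no sign condition at interior extrema, so one genuinely needs a two-sided pointwise bound on $\Delta^{2k}m_n(x_n^\ast)$, not merely the maximum-principle one-sided information available for second-order operators (as in the uniqueness proof of Proposition~2.3, where only $\Delta$ appears).

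The paper's Lemma~\ref{mlb5} avoids this circularity by not using any estimate above $H^{2k}$. Proposition~\ref{apestcm} gives $\lambda$-uniform $H^{2k}$ bounds, hence (Morrey, $2k>d/2+3$) $W^{1,\infty}$ bounds, \emph{and} the integral estimate
\[
\int_{\Tt^d}-p_\epsilon(m_\lambda)(\lambda\phi+1-\lambda)\,\dx\leq C
\]
from \eqref{apri-pterm}. If $c_n:=\min m_{\lambda_n}\to 0$, the Lipschitz bound forces $m_{\lambda_n}\leq\epsilon/2$ on a cube of side $c_n$ around the minimizing point, so there $-p_\epsilon(m_{\lambda_n})=m_{\lambda_n}^{-(d+1)}$ and the integral over the cube is bounded below by $c_n^d/(C c_n)^{d+1}\sim 1/c_n\to\infty$, contradicting the uniform bound. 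Note also that this integral estimate is not produced by testing against $(m_n,u_n)$ as you suggest; the paper tests the first equation against $m_\lambda-\epsilon-\lambda\phi-1+\lambda$ precisely so that the term $\int -p_\epsilon(m_\lambda)(\lambda\phi+1-\lambda)$ appears with the right sign, and so that the problematic linear terms in $u_\lambda$ coming from the second equation cancel except for an absorbable $\epsilon\int u_\lambda$. To repair your proof you should replace the pointwise-at-the-minimum argument by this integral/Lipschitz argument.
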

\begin{proof}
As explained before, \eqref{mfg5-epsi} corresponds to \eqref{crp5-1} with $\lambda=1$. So the result follows from $\Lambda$ being non-empty 
by \eqref{eq:notempty}
and
 both relatively open and closed. The closedeness is proved in Lemma \ref{close5-1}, while the openess is proven in Lemma \ref{lem:open}.
\end{proof}

\subsubsection{\(\Lambda\) is closed}\label{close5}
The main result in  this subsection is the following lemma.
\begin{lemma}\label{close5-1}
{
Under Assumptions~\ref{a0}--\ref{grg5-1}},  the set $\Lambda$ introduced in \eqref{defL5} is closed.
\end{lemma}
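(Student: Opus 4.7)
The plan is to show that if $\{\lambda_n\}\subset\Lambda$ satisfies $\lambda_n\to\lambda\in[0,1]$, then the associated smooth solutions $(m_n,u_n)$ of \eqref{crp5-1a} (with parameter $\lambda_n$) admit a subsequential limit $(m,u)$ that is itself a smooth solution of \eqref{crp5-1a} for the parameter $\lambda$, with $\min_{\Tt^d}m>0$. This reduces to four tasks: (i) uniform bounds, (ii) compactness, (iii) strict positivity of the limit density, and (iv) bootstrap to $C^\infty$.

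For the a priori bounds I will mimic the derivation of \eqref{apriori5-1} in Proposition~\ref{lambdasolprop}: pair the first line of \eqref{crp5-1a} against $m_n$ and the second line against $u_n$, add them, and integrate by parts. The cross-terms $-u_nm_n$ cancel, and using the monotonicity of $F_\epsi^{\lambda_n}$ (see Remark~\ref{remfemon}) together with Assumptions~\ref{grH5-1}--\ref{grg5-1} as well as the sign $p_\epsilon(m_n)\leq 0$, this yields uniform (in $n$, with $\epsi$ fixed) bounds
\begin{equation*}
\lambda_n\!\int_{\Tt^d}\!\!\bigl(m_ng(m_n)+m_n|Du_n|^\alpha\bigr)\,\dx +\epsi\bigl(\|m_n\|_{H^{2k}}^2+\|u_n\|_{H^{2k}}^2\bigr)+\Bigl(-\!\int_{\Tt^d}\!\! p_\epsilon(m_n)m_n\,\dx\Bigr) \leq C.
\end{equation*}
Because $\epsi$ is fixed here, this gives genuine $H^{2k}$ bounds on $(m_n,u_n)$; since $2k>\tfrac d2+3$, Morrey embedding and the Rellich--Kondrachov theorem provide a subsequence, still denoted the same, with $(m_n,u_n)\rightharpoonup(m,u)$ weakly in $H^{2k}\times H^{2k}$ and strongly in $C^{2}\times C^{2}$. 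In particular $m\geq0$.

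The delicate step will be establishing $\min_{\Tt^d}m>0$; the penalty term $p_\epsilon$ was introduced precisely for this. Suppose for contradiction that $m(x_0)=0$ for some $x_0\in\Tt^d$. Since $m\in C^2(\Tt^d)$ attains its minimum at $x_0$, a second-order Taylor expansion gives $m(x)\leq C|x-x_0|^2$ in a neighborhood $U$ of $x_0$. By the strong convergence $m_n\to m$ in $C^0$, we have $m_n<\epsi/2$ on $U$ for $n$ large, so $-p_\epsilon(m_n)m_n=m_n^{-d}$ there, and the uniform bound from step (i) yields $\int_U m_n^{-d}\,\dx\leq C$. Fatou's lemma then gives $\int_U m^{-d}\,\dx\leq C$, which contradicts $m(x)\leq C|x-x_0|^2$ since $|x-x_0|^{-2d}$ is not integrable in dimension $d$. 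Hence $m>0$ on $\Tt^d$, and by the continuity of $p_\epsilon$ on $(0,\infty)$ we may pass to the limit in \eqref{crp5-1a}, using weak convergence for the $\Delta^{2k}$ terms and strong $C^2$ convergence for the nonlinear lower-order terms (including $H(Du)$, $D_pH(Du)\cdot Dm$, and $g(m)$), to conclude that $(m,u)$ solves \eqref{crp5-1a} with parameter $\lambda$.

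Finally, to upgrade regularity, I will view the limit system as a pair of linear elliptic equations for $(m,u)$ with operator $\epsi(\mathrm{Id}+\Delta^{2k})$ and right-hand sides that involve $m,u,Du,D^2u,g(m),V,\phi,p_\epsi(m)$, all of which are $C^r$ for some positive $r$ thanks to the already-established $C^2$ regularity together with $m>0$. Standard elliptic bootstrapping for the order-$4k$ operator $\epsi(\mathrm{Id}+\Delta^{2k})$ iteratively lifts the regularity of $(m,u)$ to $C^\infty\times C^\infty$, so $\lambda\in\Lambda$ as required. The principal obstacle is the positivity step; the subtlety lies in combining the integral bound on the penalty with the pointwise regularity of the limit to preclude zeros, and it is essential that the penalty exponent $d+1$ in \eqref{eq:defpen} was chosen so that $-p_\epsi(s)s=s^{-d}$ is exactly non-integrable against a quadratic vanishing in $\Rr^d$.
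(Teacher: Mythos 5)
Your overall architecture (uniform a priori bounds, weak $H^{2k}$ compactness and $C^2$ convergence, strict positivity of the limit density via the penalty, then elliptic bootstrap) matches the paper's proof. However, there is a concrete gap in the a priori estimate: if you test the first line of \eqref{crp5-1a} against $m_n$, the penalty contributes $\int_{\Tt^d} p_\epsilon(m_n)\,m_n\,\dx$ to the resulting identity. Since $p_\epsilon\leq 0$ and $m_n\geq 0$, this quantity is \emph{nonpositive}; moving it to the other side gives $-\int p_\epsilon(m_n)m_n\,\dx\geq 0$ sitting on the \emph{right-hand} side of the estimate, which provides no bound on it. Your displayed inequality has the penalty term on the wrong side relative to what that pairing actually produces, so the claimed bound $\int_U m_n^{-d}\,\dx\leq C$ — and with it the Fatou step — does not follow.

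The paper's Proposition~\ref{apestcm} sidesteps this precisely by testing the Hamilton--Jacobi line against the shifted quantity $m_\lambda - \epsilon - (\lambda\phi+1-\lambda)$, not against $m_\lambda$. That choice makes both penalty contributions $\int p_\epsilon(m_\lambda)(m_\lambda-\epsilon)\,\dx$ and $-\int p_\epsilon(m_\lambda)(\lambda\phi+1-\lambda)\,\dx$ nonnegative, hence individually controlled by \eqref{apri-pterm}. With that correction, a version of your positivity argument is recoverable and is in fact a genuinely different route from Lemma~\ref{mlb5}: since $\lambda\phi+1-\lambda$ is bounded below by a positive constant uniformly in $\lambda\in[0,1]$, the bound on $-\int p_\epsilon(m_n)(\lambda_n\phi+1-\lambda_n)\,\dx$ yields $\int_{\Tt^d}(-p_\epsilon(m_n))\,\dx\leq C$, hence $\int_U m_n^{-(d+1)}\,\dx\leq C$ wherever $m_n<\epsilon/2$, and your Fatou-plus-quadratic-vanishing argument then applies a fortiori (with $|x-x_0|^{-2(d+1)}$ in place of $|x-x_0|^{-2d}$). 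The paper instead proves the stronger statement $\inf_{\lambda\in\Lambda}\min_{\Tt^d}m_\lambda\geq c>0$ by localizing on a small cube around the minimizer of $m_{\lambda_n}$ and exploiting the uniform $W^{1,\infty}$ bound; your $C^2$-Taylor-plus-Fatou argument is a nice, more pointwise alternative, but only once the correct a priori penalty estimate is in hand. (Two minor points: the cancellation of the $-u_n m_n$ cross terms and the convexity/monotonicity signs you invoke come from a direct integration by parts, not from the monotonicity of $F_\epsi^{\lambda_n}$; and Remark~\ref{remfemon} concerns \eqref{crp5-1x}, not the $\lambda$-interpolated operator.)
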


The proof of Lemma~\ref{close5-1} relies on the establishing some a priori  uniform bounds   on the solutions of \eqref{crp5-1} as stated in the two following lemmas.

\begin{pro}\label{apestcm}
Let  \(\epsi\in(0,1)\) and \(\lambda\in[0,1]\), and assume that $(m_\lambda, u_\lambda)$, with \(m_\lambda > 0\), is a classical solution to \eqref{crp5-1}. 
Then, under {Assumptions~\ref{a0}--\ref{grg5-1}}, there exists a constant,
$C>0$, which is independent of $\epsilon$ and $\lambda$, such that $(m_\lambda,
u_\lambda)$ satisfies
\begin{equation}\label{apri-pterm}
\begin{split}
&\int_{\Tt^d}-p_\epsilon(m_\lambda)(\lambda \phi +1-\lambda)\,\dx  +\int_{\Tt^d}\big( p_\epsilon(m_\lambda)(m_\lambda-\epsilon)
+{\lambda}g(m_\lambda)m_\lambda
\big)\,\dx\\
&+\int_{\Tt^d}\big(
\lambda m_\lambda |D u_\lambda|^\alpha
+
\lambda (\lambda \phi +1-\lambda)|D u_\lambda|^\alpha
\big)\,\dx
+
\epsilon\int_{\Tt^d} (m_\lambda^2 + (\Delta^k m_\lambda)^2 + u_\lambda^2
+ (\Delta^k u_\lambda)^2)
\leq C.
\end{split}\end{equation}
Moreover, $(m_\lambda, u_\lambda)$ is  bounded in $H^{2k}(\Tt^d)\times
H^{2k}(\Tt^d)$, uniformly with respect to $\lambda$.
\end{pro}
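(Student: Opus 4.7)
The approach is to derive an energy identity by testing the Hamilton--Jacobi equation in \eqref{crp5-1} against $m_\lambda-\Phi_\lambda$ and the Fokker--Planck equation against $u_\lambda$, where $\Phi_\lambda:=\lambda\phi+(1-\lambda)$ is the source of the regularized FP. This choice of test function is tailored so that, upon summing the two identities (after integrating the $\lambda\div(m_\lambda D_pH(Du_\lambda))$ term by parts), the first-order contributions $-u_\lambda(m_\lambda-\Phi_\lambda)$ from HJ and $u_\lambda(m_\lambda-\Phi_\lambda)$ from FP cancel exactly, leaving a single identity in which every structural quantity of \eqref{apri-pterm} can be read off.

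The surviving nonlinear cross terms regroup as $\lambda m_\lambda(D_pH(Du_\lambda)\cdot Du_\lambda - H(Du_\lambda)) + \lambda\Phi_\lambda H(Du_\lambda)$. Assumption~\ref{grH5-1} handles the first piece and Assumption~\ref{grH5-2} applies to the second because $\Phi_\lambda\geq 0$; together they yield the coercive bound $\tfrac{\lambda}{C}\int(m_\lambda+\Phi_\lambda)|Du_\lambda|^\alpha\,\dx$ on the left, up to an inessential $-C\lambda\int(m_\lambda+\Phi_\lambda)\,\dx$ error. For the penalty, I will use the algebraic decomposition
\[
\int_{\Tt^d} p_\epsilon(m_\lambda)(m_\lambda-\Phi_\lambda)\,\dx=\int_{\Tt^d} p_\epsilon(m_\lambda)(m_\lambda-\epsilon)\,\dx+\int_{\Tt^d}\bigl(-p_\epsilon(m_\lambda)\bigr)\Phi_\lambda\,\dx-\epsilon\int_{\Tt^d}\bigl(-p_\epsilon(m_\lambda)\bigr)\,\dx,
\]
which isolates exactly the two non-negative quantities that appear in \eqref{apri-pterm}. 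The leftover $\epsilon\int(-p_\epsilon(m_\lambda))\,\dx$ will be absorbed into $\int(-p_\epsilon(m_\lambda))\Phi_\lambda\,\dx$ on the left: since $\phi>0$ on the compact torus, $\Phi_\lambda\geq c_0:=\min\{1,\min_{\Tt^d}\phi\}>0$ uniformly in $\lambda$, so the residual is bounded by $(\epsilon/c_0)\int(-p_\epsilon(m_\lambda))\Phi_\lambda\,\dx$ and can be absorbed for $\epsilon$ small (the regime of interest for the later limit).

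The residual right-hand side contributions, namely $\lambda V(m_\lambda-\Phi_\lambda)$, $\lambda g(m_\lambda)\Phi_\lambda$, and $C\lambda(m_\lambda+\Phi_\lambda)$, will be controlled via Assumption~\ref{grg5-1}: choosing $\delta$ small enough, the quantities $\int m_\lambda$ and $\int g(m_\lambda)$ absorb into the $\lambda\int m_\lambda g(m_\lambda)\,\dx$ already present on the left. The $\epsilon$-regularization supplies $\epsilon\int(m_\lambda^2+(\Delta^km_\lambda)^2+u_\lambda^2+(\Delta^ku_\lambda)^2)\,\dx$ on the left, with cross terms of the form $\epsilon(m_\lambda+\Delta^{2k}m_\lambda)\Phi_\lambda$ controlled by Young's inequality after a routine integration by parts (a preliminary mollification of $\phi$ may be inserted if $\phi\notin H^{2k}$, with constants independent of the mollification parameter). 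This yields \eqref{apri-pterm} with $C$ independent of both $\lambda$ and $\epsilon$. The $\lambda$-uniform $H^{2k}(\Tt^d)\times H^{2k}(\Tt^d)$ bound then follows from the Gagliardo--Nirenberg interpolation inequality applied to the $\epsilon$-controlled $L^2$ bounds on $\Delta^km_\lambda$ and $\Delta^ku_\lambda$; this last bound is $\epsilon$-dependent, as the statement permits.

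I expect the main technical subtlety to lie in the penalty term: cleanly extracting the two non-negative quantities $\int p_\epsilon(m_\lambda)(m_\lambda-\epsilon)\,\dx$ and $\int(-p_\epsilon(m_\lambda))\Phi_\lambda\,\dx$ from the single tested integral, and then absorbing the leftover $\epsilon\int(-p_\epsilon(m_\lambda))\,\dx$ through the uniform strict positivity of $\phi$ in Assumption~\ref{a0}. This is precisely where the hypothesis $\phi>0$ plays a structural, rather than cosmetic, role.
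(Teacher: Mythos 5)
Your overall architecture matches the paper's: test the Hamilton--Jacobi equation against a shift of $m_\lambda$, test the Fokker--Planck equation against $u_\lambda$, add, integrate by parts, and then invoke Assumptions~\ref{grH5-1}--\ref{grg5-1} to extract coercivity. The one place you depart from the paper --- and where a genuine gap appears --- is the choice of multiplier. The paper multiplies the first equation by $m_\lambda-\epsilon-(\lambda\phi+1-\lambda)$, whereas you use $m_\lambda-(\lambda\phi+1-\lambda)$. Your choice has the cosmetic advantage that the first-order $u_\lambda$-terms cancel exactly (with the paper's multiplier, a residual $\epsilon\int_{\Tt^d}u_\lambda\,\dx$ survives, but this is immediately absorbed by Young into $\tfrac{\epsilon}{2}\int u_\lambda^2$ with a constant uniform in $\epsilon\in(0,1)$). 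The cost, however, is the penalty term: your decomposition produces the leftover $-\epsilon\int_{\Tt^d}(-p_\epsilon(m_\lambda))\,\dx$, which you propose to absorb into $\int(-p_\epsilon(m_\lambda))\Phi_\lambda\,\dx$ using $\Phi_\lambda\geq c_0:=\min\{1,\min_{\Tt^d}\phi\}$. This absorption yields a coefficient $1-\epsilon/c_0$, which is only positive when $\epsilon<c_0$. Since $\min_{\Tt^d}\phi$ may well be less than $1$, this does \emph{not} cover the full range $\epsilon\in(0,1)$ required by the statement, and you acknowledge this yourself (``for $\epsilon$ small''). The proposition, as stated, asks for a constant independent of all $\epsilon\in(0,1)$, and your argument does not deliver that.

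The fix is precisely the paper's $-\epsilon$ shift in the multiplier: with $m_\lambda-\epsilon-\Phi_\lambda$, the penalty decomposes as
\begin{equation*}
\int_{\Tt^d}p_\epsilon(m_\lambda)\bigl(m_\lambda-\epsilon-\Phi_\lambda\bigr)\,\dx
=\int_{\Tt^d}p_\epsilon(m_\lambda)(m_\lambda-\epsilon)\,\dx
+\int_{\Tt^d}\bigl(-p_\epsilon(m_\lambda)\bigr)\Phi_\lambda\,\dx,
\end{equation*}
and both summands are non-negative and coincide exactly with the two penalty quantities in \eqref{apri-pterm}, with no leftover at all. The small price --- the residual $\epsilon\int u_\lambda$ --- is absorbed uniformly in $\epsilon$. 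So the shift is not a cosmetic tweak but the device that makes the estimate uniform over all of $(0,1)$. Everything else in your outline (the regrouping of the Hamiltonian terms, the use of Assumption~\ref{grg5-1} with small $\delta$ to handle the $\lambda\Phi_\lambda g(m_\lambda)$ and $\lambda V(m_\lambda-\Phi_\lambda)$ contributions, Young plus Gagliardo--Nirenberg for the $\epsilon$-cross terms, and the $\epsilon$-dependent $H^{2k}$ bound from interpolation) tracks the paper's proof and is correct. Your remark about mollifying $\phi$ if it is not in $H^{2k}$ is a reasonable flag; the paper tacitly treats $\phi$ as smooth enough for $\Delta^k\phi\in L^2$.
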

%
%
%proof
\begin{proof}
Fix   \(\epsi\in(0,1)\) and \(\lambda\in[0,1]\), and assume that there exists $(m_\lambda, u_\lambda)
\in C^\infty(\Tt^d)\times C^\infty(\Tt^d)$ solving  \eqref{crp5-1} and such that $m_\lambda > 0$. 

We start multiplying the first equation
in \eqref{crp5-1} by $(m_\lambda-\epsilon -\lambda\phi - 1+\lambda)$ and
the second one by $u_\lambda$; we then  integrate
over $\Tt^d$ and add the resulting identities to  obtain
\begin{equation}
\begin{aligned}\label{eq:forapriori}
& \int_{\Tt^d}\Big[ \lambda m_\lambda\big( - H(D u_\lambda) +D_p H(D u_\lambda)\cdot D u_\lambda\big) + \lambda ( \epsilon + \lambda \phi + 1 - \lambda
) H(D u_\lambda)\Big]\,\dx\\
&+  \int_{\Tt^d} \Big[ \lambda  m_\lambda  g(m_\lambda)-(\lambda\phi
+ 1 - \lambda )
 p_\epsilon(m_\lambda)+(m_\lambda - \epsilon)
p_\epsilon(m_\lambda)\Big]\,\dx\\
&+
\epsilon\int_{\Tt^d} \Big[ m_\lambda^2 + (\Delta^k m_\lambda)^2 + u_\lambda^2
+ (\Delta^k u_\lambda)^2\Big]\,\\
&\quad = \int_{\Tt^d}\Big[
\lambda( \epsilon + \lambda \phi + 1 - \lambda ) g(m_\lambda) +
\lambda m_\lambda V(x) -\lambda( \epsilon + \lambda \phi + 1 - \lambda )V(x)
\Big]\,\dx\\
&\qquad +
\int_{\Tt^d} \Big[ \epsilon(\epsilon+\lambda \phi +1-\lambda )m_\lambda
+ \epsilon\lambda\Delta^k m_\lambda \Delta^k  \phi  \Big]\,\dx-\epsilon \int_{\Tt^d}  u_\lambda \, \dx.
\end{aligned}
\end{equation}

Next, we observe that \( 0\leq \epsilon + \lambda \phi +
1 - \lambda \leq 2+\Vert \phi\Vert_\infty\) because  $\epsilon\in(0,1)$, $\phi\geq
0$, and $\lambda \in [0,1]$. Thus, using Assumptions~\ref{a0}--\ref{grH5-2}  first and then Assumption~\ref{grg5-1} with \(\delta=\tfrac14\), we can estimate the first integral on the left-hand side of \eqref{eq:forapriori} as follows:
\begin{equation}
\label{eq:forapriori2}
\begin{aligned}
& \int_{\Tt^d}\Big[ \lambda m_\lambda\big( - H(D u_\lambda) +D_p H(D
u_\lambda)\cdot D u_\lambda\big) + \lambda ( \epsilon + \lambda \phi +
1 - \lambda
) H(D u_\lambda)\Big]\,\dx\\
& \quad \geq \frac1C \int_{\Tt^d}\Big[ \lambda m_\lambda|D u_\lambda|^\alpha + \lambda ( \epsilon + \lambda \phi +
1 - \lambda
)|D u_\lambda|^\alpha\Big]\,\dx -C\lambda\int_{\Tt^d} m_\lambda \,\dx +\lambda ( \epsilon + \lambda \phi +
1 - \lambda
)C \\
&\quad\geq \frac1C \int_{\Tt^d}\Big[ \lambda m_\lambda|D u_\lambda|^\alpha
+ \lambda ( \epsilon + \lambda \phi +
1 - \lambda
)|D u_\lambda|^\alpha\Big]\,\dx - \frac\lambda4\int_{\Tt^d} m_\lambda g(m_\lambda) \,\dx - \widetilde C,
\end{aligned}
\end{equation}
where \(\widetilde C\) is a positive constant that is independent of \(\epsi\) and of \(\lambda\). Similarly, the  right-hand side of \eqref{eq:forapriori} can be estimated as follows:
\begin{equation}
\label{eq:forapriori3}
\begin{aligned}
&\int_{\Tt^d}\Big[
\lambda( \epsilon + \lambda \phi + 1 - \lambda ) g(m_\lambda) +
\lambda m_\lambda V(x) -\lambda( \epsilon + \lambda \phi + 1 - \lambda )V(x)
\Big]\,\dx\\
&\quad +
\int_{\Tt^d} \Big[ \epsilon(\epsilon+\lambda \phi +1-\lambda )m_\lambda
+ \epsilon\lambda\Delta^k m_\lambda \Delta^k  \phi  \Big]\,\dx-\epsilon \int_{\Tt^d}  u_\lambda \, \dx\\
&\qquad\leq  \frac\lambda4\int_{\Tt^d} m_\lambda g(m_\lambda)
\,\dx +\frac\epsi2 \int_{\Tt^d}  \Big(m_\lambda^2 +(\Delta^k m_\lambda)^2+u_\lambda^2   \Big)  \,\dx + \widehat C, 
\end{aligned}
\end{equation}
where \(\widetilde C\) is a positive constant that is independent of \(\epsi\)
and of \(\lambda\). Then,  it follows from \eqref{eq:forapriori}--\eqref{eq:forapriori3} that
\begin{equation*}
%\label{eq:forapriori4}
\begin{aligned}
&\frac1C \int_{\Tt^d}\Big[ \lambda m_\lambda|D u_\lambda|^\alpha
+ \lambda ( \epsilon + \lambda \phi +
1 - \lambda
)|D u_\lambda|^\alpha\Big]\,\dx\\
&\quad+  \int_{\Tt^d} \Big[ \frac\lambda2  m_\lambda  g(m_\lambda)-(\lambda\phi
+ 1 - \lambda )
 p_\epsilon(m_\lambda)+(m_\lambda - \epsilon)
p_\epsilon(m_\lambda)\Big]\,\dx\\
&\quad+
\frac\epsilon2\int_{\Tt^d} \Big[ m_\lambda^2 + (\Delta^k m_\lambda)^2 + u_\lambda^2
+ (\Delta^k u_\lambda)^2\Big]\, \leq \widetilde C+ \widehat
C.
\end{aligned}
\end{equation*}

By  the properties of $p_\epsilon$, we have that $-p_\epsilon(m_\lambda)\geq
0$ and $(m_\lambda -\epsilon)p_\epsilon(m_\lambda)\geq 0$. Hence, 
\begin{equation*}\label{apri-crp5-1}
\epsilon\int_{\Tt^d} \big[m_\lambda^2 + (\Delta^k m_\lambda)^2 + u_\lambda^2
+ (\Delta^k u_\lambda)^2\big]\leq \overline C
\end{equation*}
for some positive and \(\lambda\)-independent constant $\overline C$. 
The Gagliardo--Nirenberg interpolation inequality then yields bounds  in $H^{2k}(\Tt^d)\times H^{2k}(\Tt^d)$  for $(m_\lambda, u_\lambda)$
that are  uniform with respect
to $\lambda$.
\end{proof}
\begin{remark}
\label{estimateforH}
By not using the bound by below for $H$ in Assumption \ref{grH5-2} in the estimate \eqref{eq:forapriori2}, we obtain when $\lambda=1$ that
\[
\int_{\Tt^d} H(Du)\,\dx\leq C,
\]
where the constant does not depend on $u$ or $\epsilon$.        
\end{remark}

Next, we prove a uniform bound on $m_\lambda$ from below.\begin{lem}\label{mlb5}
Let  $\Lambda$ be given by \eqref{defL5} and assume that {Assumptions~\ref{a0}--\ref{grg5-1}} hold. 
Then,
there exists a  constant, $c>0$, such that 
\begin{equation*}
\inf_{\lambda \in \Lambda}\inf_{x\in \Tt^d}m_\lambda (x) \geq c>0.
\end{equation*}
\end{lem}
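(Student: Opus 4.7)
The plan is to combine two pieces of information already in hand, namely the integral bound on $-p_\epsilon(m_\lambda)$ from \eqref{apri-pterm} and a uniform Lipschitz estimate on $m_\lambda$ that follows from the $H^{2k}$ bound in Proposition~\ref{apestcm}, via a ball-around-the-minimum argument that exploits the blow-up of $-p_\epsilon$ as its argument approaches $0$. The constant $c$ obtained is uniform in $\lambda\in\Lambda$ (and may depend on $\epsilon$, which is harmless for the closedness argument).

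First, I would read off from \eqref{apri-pterm} the estimate
\[
\int_{\Tt^d} -p_\epsilon(m_\lambda)(\lambda\phi + 1-\lambda)\,\dx \leq C,
\]
which is valid uniformly in $\lambda\in[0,1]$ since every other term on the left-hand side of \eqref{apri-pterm} is non-negative (in particular, $(m_\lambda-\epsilon)p_\epsilon(m_\lambda)\geq 0$ by the definition of $p_\epsilon$). Using Assumption~\ref{a0}, we have $\phi>0$ on the compact set $\Tt^d$, so $\mu:=\min\{1,\min_{\Tt^d}\phi\}>0$ and $\lambda\phi+1-\lambda\geq \mu$ for all $\lambda\in[0,1]$. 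Consequently,
\[
\int_{\Tt^d}-p_\epsilon(m_\lambda)\,\dx\leq \frac{C}{\mu}
\]
uniformly in $\lambda\in\Lambda$. Next, Proposition~\ref{apestcm} provides a $\lambda$-uniform bound on $m_\lambda$ in $H^{2k}(\Tt^d)$; since $2k>\tfrac{d}{2}+3$, the Sobolev embedding $H^{2k}(\Tt^d)\hookrightarrow C^1(\Tt^d)$ yields a constant $K=K(\epsilon)$, independent of $\lambda$, with $\|Dm_\lambda\|_{L^\infty(\Tt^d)}\leq K$.

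Finally, let $x_\lambda\in\Tt^d$ be a point where $m_\lambda$ attains its minimum, and set $\delta_\lambda:=m_\lambda(x_\lambda)>0$. If $\delta_\lambda\geq \epsilon/4$ the claim is immediate; otherwise, on the ball $B_r(x_\lambda)$ with $r:=\delta_\lambda/K$, the Lipschitz bound gives $m_\lambda(x)\leq \delta_\lambda+Kr=2\delta_\lambda\leq \epsilon/2$, so that $-p_\epsilon(m_\lambda(x))=m_\lambda(x)^{-(d+1)}\geq (2\delta_\lambda)^{-(d+1)}$ on $B_r(x_\lambda)$. Integrating and invoking the previous $L^1$ bound,
\[
\frac{C}{\mu}\geq \int_{B_r(x_\lambda)}-p_\epsilon(m_\lambda)\,\dx \geq \frac{\omega_d r^d}{(2\delta_\lambda)^{d+1}} = \frac{\omega_d}{2^{d+1}K^d\,\delta_\lambda},
\]
where $\omega_d$ is the volume of the unit ball in $\Rr^d$. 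This forces $\delta_\lambda\geq c_0:=\mu\omega_d/(2^{d+1}K^d C)>0$, and the lemma follows with $c=\min\{\epsilon/4, c_0\}$.

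The main obstacle is obtaining a quantitative \emph{pointwise} Lipschitz control on $m_\lambda$ that is uniform in $\lambda$, since without it one cannot translate the integral bound on $-p_\epsilon(m_\lambda)$ into a pointwise one; this is precisely what the high-order regularization $\epsilon\,\Delta^{2k}m$ buys us, through the Sobolev embedding afforded by the choice $2k>\tfrac{d}{2}+3$ in Proposition~\ref{apestcm}.
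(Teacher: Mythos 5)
Your argument is correct and follows essentially the same strategy as the paper's proof: both combine the $L^1$ bound on $-p_\epsilon(m_\lambda)$ from \eqref{apri-pterm}, the $\lambda$-uniform $W^{1,\infty}$ bound on $m_\lambda$ obtained from Proposition~\ref{apestcm} via Sobolev/Morrey embedding, and a neighborhood-of-the-minimum argument exploiting the blow-up of $p_\epsilon$ near $0$ (you use a ball and argue directly; the paper uses a cube and argues by contradiction along a sequence $\lambda_n$). One minor technical point: the identity $|B_r(x_\lambda)|=\omega_d r^d$ on the torus requires $r$ to be small, which can fail for $r=\delta_\lambda/K$ when $K$ is small; this is easily repaired by taking $K\geq 1$ without loss of generality (or by choosing the radius proportional to $\delta_\lambda$, as the paper does, which is why its lower bound carries the factor $(\sqrt{d}R/2+1)^{-(d+1)}$ rather than $K^{-d}$).
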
 

\begin{proof}
By contradiction, assume that $\inf_{\lambda\in \Lambda}\inf_{\Tt^d}m_\lambda
=0$. 

Let $\{\lambda_n\}_{n=1}^\infty\subset \Lambda$ be such that $c_n=\inf_{\Tt^d}m_{\lambda_n}\to
0$ as $n\to\infty$. 
By the definition of $\Lambda$,  there exists $x_n\in\Tt^d$ such that 
$c_n=\min_{\Tt^d}m_{\lambda_n}=m_{\lambda_n}(x_n)>0$.
Without loss
of generality, we may further assume that \(\lambda_n\to\lambda\) for some \(\lambda\in[0,1]\).
We have from Proposition~\ref{apestcm} and Morrey's
embedding theorem that
\begin{equation*}
R= \sup_{n\in\Nn}\|m_{\lambda_n}\|_{W^{1,\infty}(\Tt^d)}<\infty.
\end{equation*}

Thus, we can find \(n_0\in\Nn\) such that $0< c_n\leq \min\{\frac{\epsilon}{4},
\frac{\epsilon}{2\sqrt{d}R}\}$ for all $n\geq n_0$.
Hence, denoting by  $Q_n$  the cube in \(\Rr^d\) with side length $c_n$ and  centered at $x_n$, we have, for all \(x\in Q_n\) with   $n\geq n_0$, that
\begin{equation*}
0<m_{\lambda_n}(x)
\leq m_{\lambda_n}(x_n)+  
R|x-x_n|\leq c_n + \frac{\sqrt{d} c_n}{2}R \leq \frac{\epsilon}{2}.
\end{equation*}
In particular, recalling \eqref{eq:defpen},   $-p_\epsilon (m_{\lambda_n}(x) )=\tfrac{1}{(m_{\lambda_n}(x))^{d+1}}$ for all \(x\in Q_n\) with  $n\geq n_0$.

Then, 
using the fact that all integral terms on the left-hand side of \eqref{apri-pterm} are non-negative,  there exists a  constant, $C>0$, independent of \(n\) and for which we have, for all \(n\geq n_0\),
that\begin{align*}
C&\geq 
\int_{\Tt^d}-p_\epsilon(m_{{\lambda_n}})({\lambda_n} \phi + 1-{\lambda_n})\,\dx
\geq
\int_{Q_n}\frac{1}{m_{{\lambda_n}}^{d+1}}({\lambda_n} \phi + 1-{\lambda_n})\,\dx\\
&\geq
\frac{c_n^{d} }{\big(\frac{\sqrt{d} c_n}{2}R+c_n\big)^{d+1}}\big( {\lambda_n}
\phi_{min} + 1-{\lambda_n}\big)=
\Big(\frac{2}{\sqrt{d}R+2}\Big)^{d+1}\frac{1}{c_n}\big( {\lambda_n}
\phi_{min} + 1-{\lambda_n} \big), 
\end{align*}
where \(\phi_{min}=\min_{x\in\Tt^d}\phi(x)>0\) by Assumption \ref{a0}. 
Finally, we observe that \(\lim_{n\to\infty} \tfrac{1}{c_n} = +\infty\) and \(\lim_{n\to\infty} ({\lambda_n}
\phi_{min} + 1-{\lambda_n}) = {\lambda}
\phi_{min} + 1-{\lambda}>0\) because \(\lambda\in[0,1]\) and \(\phi_{min}>0\). Thus, taking the limit as  $n \to \infty$ in the preceding estimate leads to a contradiction, which concludes the proof of the lemma.   
\end{proof}

\begin{proof}[Proof of Lemma~\ref{close5-1}]
Let $\{\lambda_n\}_{n=1}^\infty \subset \Lambda$
be such that $\lambda_n \to \lambda $ as $n\to \infty$. Then, \(\lambda\in[0,1]\). To show that \(\lambda\in\Lambda\), we use the definition of $\Lambda$  and  Lemma~\ref{mlb5} to find, for each $n\in\Nn$,
 a pair $(m_n, u_n)\in C^\infty(\Tt^d)\times C^\infty(\Tt^d)$ with $\min_{\Tt^d}m_n\geq  c>
0$ solving \eqref{crp5-1} with $\lambda$ replaced by $\lambda_n$, where the constant \(c\) does not depend on \(n\). 

From Proposition~\ref{apestcm},  there exists a subsequence of $\{(m_{n}, u_{n})\}_{n=1}^\infty$, $\{(m_{n_j}, u_{n_j})\}_{j=1}^\infty$,   and $(m,u)\in H^{2k}(\Tt^d)\times H^{2k}(\Tt^d)$
such that $(m_{n_j}, u_{n_j})\rightharpoonup (m, u)$ in $H^{2k}(\Tt^d)\times
H^{2k}(\Tt^d)$. 
Using the Rellich--Kondrachov theorem and Morrey's theorem, $(m_{n_j}, u_{n_j})\to
(m, u)$ in $C^{2,l}(\Tt^d)\times C^{2,l}(\Tt^d)$ for some
$l\in (0,1)$.  Thus,
we have $\min_{\Tt^d}m>0$.

On the other hand,  for all $(w,v)\in H^{2k}(\Tt^d)\times
H^{2k}(\Tt^d)$, we obtain from  \eqref{crp5-1} that
\begin{align*}
\int_{\Tt^d}\Big(
\epsilon (m_{n_j} w + \Delta^k m_{n_j} \Delta^k w)+ g_1[m_{n_j}, u_{n_j}]w
\Big)\,\dx=0,\\
\int_{\Tt^d}\Big(
\epsilon(u_{n_j} v + \Delta^k u_{n_j} \Delta^k v)+ g_2[m_{n_j}, u_{n_j}]v
\Big)\,\dx=0,
\end{align*}
where 
$g_1[m_{n_j}, u_{n_j}]= -u_{n_j} -\lambda_{n_j} H(D u_{n_j}) +\lambda_{n_j}
g(m_{n_j}) - \lambda_{n_j} V(x) + p_\epsilon(m_{n_j})$ 
and 
$g_2[m_{n_j}, u_{n_j}]=m_{n_j}-\lambda_{n_j}\div(m_{n_j}D_p H(D u_{n_j}))-\lambda_{n_j}
\phi  -(1-\lambda_{n_j})$. 
Using the weak convergence in $H^{2k}(\Tt^d)\times H^{2k}(\Tt^d) $ and the
strong convergence in $C^{2,l}(\Tt^d) \times C^{2,l}(\Tt^d) $ of $\{(m_{n_j}, u_{n_j})\}_{j=1}^\infty$ to \((m,u)\) justified above, 
and the convergence $\lambda_{n_j} \to \lambda\in [0,1]$,
taking a limit as $j\to \infty$ in the previous identities  yields\begin{align*}
\int_{\Tt^d}\Big(
\epsilon (m w + \Delta^k m \Delta^k w)+ g_1[m,
u]w
\Big)\,\dx=0,\\
\int_{\Tt^d}\Big(
\epsilon(u v + \Delta^k u \Delta^k v)+ g_2[m, u]v
\Big)\,\dx=0.
\end{align*}
Because $g_1[m, u]$, $g_2[m, u]\in C^{0,l}(\Tt^d)$
for some $l\in (0,1)$,  we have $(m,u)
\in C^{4k,l}(\Tt^d) \times  C^{4k,l}(\Tt^d)$ by standard elliptic-regularity arguments, and \((m,u)\) solves \eqref{crp5-1}. Finally, using a bootstrap argument, we conclude that $(m,u) \in C^{\infty}(\Tt^d) \times C^{\infty}(\Tt^d)$.  Hence, $\lambda \in \Lambda$,   which proves that
 $\Lambda$ is closed.
\end{proof}

\subsubsection{\(\Lambda\) is open}\label{open5}
Next, we show that $\Lambda$ is relatively open.
\begin{lemma}\label{lem:open}
Under {Assumptions~\ref{a0}--\ref{grg5-1}}, the set $\Lambda$ introduced
in \eqref{defL5} is relatively open in  \([0,1]\).
\end{lemma}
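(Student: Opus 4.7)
The plan is to apply the implicit function theorem in Banach spaces, in the spirit of the continuity-method argument used for the linear model in Section~\ref{exmono}. Fix $\lambda_0 \in \Lambda$ and let $(m_0, u_0) \in C^\infty(\Tt^d) \times C^\infty(\Tt^d)$, with $\min_{\Tt^d} m_0 > 0$, be a corresponding smooth solution of \eqref{crp5-1}. With $2k > d/2 + 3$, I would take $X = H^{4k}(\Tt^d) \times H^{4k}(\Tt^d)$ and $Y = L^2(\Tt^d) \times L^2(\Tt^d)$; then $X \hookrightarrow C^4(\Tt^d) \times C^4(\Tt^d)$, so $\Phi(m,u,\lambda) := F_\epsilon^\lambda[m,u]$ maps into $Y$ and is $C^1$ on a neighborhood of $(m_0, u_0, \lambda_0)$ in $X \times \Rr$ on which $m$ stays uniformly bounded below (so $p_\epsilon$ is smooth there). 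The Fr\'echet derivative $\mathcal{L}_0 := D_{(m,u)} \Phi(m_0, u_0, \lambda_0)$ acts by
\begin{equation*}
\mathcal{L}_0 \begin{bmatrix} w \\ v \end{bmatrix}
=
\begin{bmatrix}
-v - \lambda_0 D_p H(Du_0)\cdot Dv + [\lambda_0 g'(m_0) + p'_\epsilon(m_0)]\,w + \epsilon(w + \Delta^{2k} w) \\
w - \lambda_0 \div(w\, D_p H(Du_0)) - \lambda_0 \div(m_0\, D^2_{pp} H(Du_0)\, Dv) + \epsilon(v + \Delta^{2k} v)
\end{bmatrix},
\end{equation*}
and the key step will be to show that $\mathcal{L}_0 : X \to Y$ is an isomorphism.

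For injectivity, I would repeat, in linearized form, the monotonicity computation \eqref{A1mono}. Pairing the first row of $\mathcal{L}_0(w,v)$ against $w$ and the second against $v$, integrating by parts, the cross term $-\lambda_0 D_p H(Du_0)\cdot Dv\,w$ cancels against $\int -\lambda_0\div(w\, D_p H(Du_0))v\,\dx$, the $\pm vw$ terms cancel, and the transport-coefficient divergence produces the non-negative quadratic $\int \lambda_0 m_0 D^2_{pp} H(Du_0) Dv\cdot Dv\,\dx$. Using $m_0>0$, $H$ convex, $g'\geq 0$ and $p'_\epsilon\geq 0$, setting $\mathcal{L}_0(w,v)=0$ then yields
\begin{equation*}
\int_{\Tt^d} \Big[ \lambda_0 m_0 D^2_{pp} H(Du_0)Dv\cdot Dv + (\lambda_0 g'(m_0) + p'_\epsilon(m_0))w^2 + \epsilon(w^2 + v^2 + (\Delta^k w)^2 + (\Delta^k v)^2) \Big] \dx = 0,
\end{equation*}
and the coercive term $\epsilon(w^2+v^2)$ forces $w\equiv v\equiv 0$. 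For surjectivity, I would decompose $\mathcal{L}_0 = \mathcal{L}_{\textnormal{iso}} + \mathcal{K}$, where $\mathcal{L}_{\textnormal{iso}}(w,v) = (\epsilon(w + \Delta^{2k} w),\,\epsilon(v + \Delta^{2k} v))$ is a diagonal isomorphism $X\to Y$ (by Lax--Milgram exactly as in Subsection~\ref{bfa4-2}), while $\mathcal{K}:X\to Y$ collects the remaining terms, which involve only derivatives of $w$ and $v$ of order at most two and are therefore compact $X\to Y$ via Rellich--Kondrachov. Hence $\mathcal{L}_0$ is a compact perturbation of an isomorphism, so Fredholm of index zero, and injectivity yields surjectivity.

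With $\mathcal{L}_0$ an isomorphism, the implicit function theorem \cite{Die} produces $\delta > 0$ and a $C^1$ curve $\lambda \mapsto (m_\lambda, u_\lambda) \in X$ defined on $(\lambda_0 - \delta, \lambda_0 + \delta)\cap[0,1]$ solving $F_\epsilon^\lambda[m_\lambda, u_\lambda]=0$ with $(m_{\lambda_0},u_{\lambda_0})=(m_0, u_0)$. Shrinking $\delta$ if necessary, the embedding $X \hookrightarrow C(\Tt^d)\times C(\Tt^d)$ and the strict positivity of $m_0$ guarantee $m_\lambda > 0$ on $\Tt^d$, and a standard elliptic bootstrap applied to the equations for $m_\lambda$ and $u_\lambda$ upgrades them to $C^\infty(\Tt^d)\times C^\infty(\Tt^d)$. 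Thus $(\lambda_0 - \delta, \lambda_0 + \delta)\cap[0,1] \subset \Lambda$, proving that $\Lambda$ is relatively open. I expect the surjectivity of $\mathcal{L}_0$ to be the main obstacle: injectivity falls out essentially for free from the linearized monotone structure, but surjectivity requires recognizing $\mathcal{L}_0$ as a compact perturbation of the polyharmonic isomorphism $\mathcal{L}_{\textnormal{iso}}$, for which it is essential that every term collected in $\mathcal{K}$ contains only derivatives of order strictly less than the leading $\Delta^{2k}$.
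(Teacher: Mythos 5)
Your proposal is correct, and at a high level it follows the same architecture as the paper: fix $\lambda_0\in\Lambda$, compute the Fr\'echet derivative $\mathcal L_0$ of $F_\epsilon^{\lambda_0}$ at a known smooth solution, show $\mathcal L_0$ is an isomorphism, and invoke the implicit function theorem, with a final step to recover positivity of $m_\lambda$ and smoothness by bootstrap. The injectivity argument is identical: pair the linearization against $(w,v)$, integrate by parts so the cross terms cancel, and read off non-negativity of the remaining quadratic forms from the convexity of $H$, the monotonicity of $g$ and $p_\epsilon$, and the positivity of $m_0$, with the $\epsilon$-regularization forcing $w=v\equiv0$.

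Where you genuinely depart from the paper is in the surjectivity of $\mathcal L_0$ and, correspondingly, in the choice of spaces. The paper takes $\mathcal L$ as a map from $C^{4k}(\Tt^d)\times C^{4k}(\Tt^d)$ to $C(\Tt^d)\times C(\Tt^d)$ and proves surjectivity constructively: it introduces an auxiliary bilinear form $B$ on $H^{2k}(\Tt^d)\times H^{2k}(\Tt^d)$, verifies coercivity (via Gagliardo--Nirenberg) and boundedness, solves by Lax--Milgram, and then upgrades the $H^{2k}$-solution to $C^{4k}$ by elliptic regularity and bootstrap. You instead work in $H^{4k}\times H^{4k}\to L^2\times L^2$ and split $\mathcal L_0$ as the diagonal polyharmonic isomorphism $\epsilon(I+\Delta^{2k})$ plus a remainder $\mathcal K$ involving only derivatives of order at most two; since $\mathcal K$ therefore factors through a compact Sobolev embedding, $\mathcal L_0$ is Fredholm of index zero and injectivity gives surjectivity for free. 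Both routes are valid. The Lax--Milgram argument is constructive and mirrors the bilinear-form technique used earlier in the chapter (Subsections~\ref{bfa4-2} and \ref{vpbfa5-1}), so it keeps the exposition uniform; the Fredholm decomposition is shorter once one notices the large gap between the order of the regularizing term ($4k$) and the order of everything else ($\leq 2$), and it avoids having to set up a separate bilinear form. Both approaches rely on the same regularity of the frozen coefficients $D_pH(Du_0)$, $D^2_{pp}H(Du_0)$, $m_0$, $g'(m_0)$, $p_\epsilon'(m_0)$, and both need the concluding bootstrap and the observation that the $C^0$-embedding preserves $\inf m>0$ on a short $\lambda$-interval, which you state explicitly and the paper leaves implicit.
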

\begin{proof}
Fix $\lambda_0 \in \Lambda $ and let $(m_0,u_0)$ be a smooth solution to \eqref{crp5-1} with \(\lambda=\lambda_0\) such that $\inf_{\Tt^d}m_0>0$. We will find a neighborhood of \(\lambda_0\) in \([0,1]\) contained in \(\Lambda\) using the implicit function theorem in Banach spaces (see, for example, \cite{Die}).
 Here, we use the implicit function theorem in $C^{4k}(\Tt^d)\times C^{4k}(\Tt^d)$.  That is, we regard the map $F$ as a map from $C^{4k}(\Tt^d)\times C^{4k}(\Tt^d)$ to $C(\Tt^d)\times C(\Tt^d)$.

First, recalling \eqref{crp5-1a}, we compute  the Fr\'{e}chet derivative, \(\Ll\),  of the mapping  $(m,u) \mapsto F_\epsilon^{\lambda_0}(m,u)$  at \((m_0, u_0)\). For each $(w,v)\in C^{4k}(\Tt^d)\times C^{4k}(\Tt^d)$, we have
\begin{equation*}
\Ll 
  \begin{bmatrix}
      w\\
      v
  \end{bmatrix}
=
  \begin{bmatrix}
      -v - \lambda_0 D_p H(D u_0)\cdot D v + \lambda_0 g'(m_0) w + p_\epsilon'(m_0)w + \epsilon(w + \Delta^{2k}w) \\
      w - \lambda_0 \div\big(w D_p H(D u_0)+  m_0 D_{pp}^2H(Du_0)D v\big) + \epsilon(v + \Delta^{2k}v) 
  \end{bmatrix}.
\end{equation*}
We want to prove that $\Ll$ is an isomorphism from $C^{4k}(\Tt^d)\times C^{4k}(\Tt^d)$ to $C(\Tt^d)\times C(\Tt^d)$.

Because $\Ll$ is linear, 
to show that  $\Ll$ is injective is equivalent to showing that its kernel is trivial. 
For that,
 we assume that $(w, v) \in C^{4k}(\Tt^d)\times C^{4k}(\Tt^d)$ satisfies $\Ll[w,v]=0$. Recalling that $H$ is convex, $g$ and $p_\epsilon$ are increasing, and $m_0 >0$, it follows that \begin{equation}\label{ineq5-1}
\lambda_0 m_0 (Dv)^TD_{pp}^2H(D u_0)D v\geq 0,\ \ 
\lambda_0 g'(m_0)w^2\geq 0,\ \ 
p_\epsilon'(m_0)w^2\geq 0.
\end{equation}
Therefore, taking the inner product between $\Ll[w,v]$ and $(w, v)$, an integration by parts yields 
\begin{align*}
0&=
\int_{\Tt^d}\Big(
\lambda_0  \big(g'(m_0 )w^2 +m_0  (Dv)^TD_{pp}^2H(D u_0)D v
\big) + p_\epsilon'(m_0 )w^2\Big)\,\dx\\
&\quad+ \epsilon\int_{\Tt^d}\Big(
w^2 + (\Delta^{k}w)^2
+
v^2 + (\Delta^{k}v)^2
\Big)\,\dx\\
&\geq
\epsilon\int_{\Tt^d}\Big(
w^2 + (\Delta^{k}w)^2
+
v^2 + (\Delta^{k}v)^2
\Big)\,\dx\geq 0,
\end{align*}
from which we obtain that $(w, v) \equiv (0, 0)$.

Next, we show that $\Ll$ is surjective. For this, we need to show that given $(h_1, h_2) \in C(\Tt^d) \times C(\Tt^d)$
there exists $(\bar w,\bar v)\in C^{4k}(\Tt^d)\times C^{4k}(\Tt^d)$
that solves $\Ll(\bar w,\bar v)=(h_1, h_2)$. As a first step, we a find a solution to this problem in 
$H^{2k}(\Tt^d)\times H^{2k}(\Tt^d)$.

For $(w_1,v_1)$, $(w_2, v_2)\in H^{2k}(\Tt^d)\times H^{2k}(\Tt^d)$, define 
\begin{align*}
B\left[
  \begin{bmatrix}
      w_1\\
      v_1
  \end{bmatrix},
  \begin{bmatrix}
      w_2\\
      v_2
  \end{bmatrix}
\right]=&
\int_{\Tt^d}\Big(
      -v_1w_2 - \lambda_0 D_p H(D u_0)\cdot D v_1 w_2 + \lambda_0 g'(m_0) w_1 w_2 + p_\epsilon'(m_0)w_1w_2\\
      &\qquad\enspace
      +w_1 v_2 - \lambda_0 \div\big(w_1 D_p H(D u_0)+  m_0 D_{pp}^2H(Du_0)D v_1\big)v_2
\Big)\,\dx\\
&+
\epsilon\int_{\Tt^d}(
w_1w_2 + \Delta^k w_1 \Delta^k w_2
+
v_1v_2 + \Delta^k v_1 \Delta^k v_2
)\,\dx.
\end{align*}
We consider the problem of finding $(\bar w,\bar v)\in H^{2k}(\Tt^d)\times H^{2k}(\Tt^d)$ such that, for all $(\xi, \eta)\in H^{2k}(\Tt^d)\times H^{2k}(\Tt^d)$,
we have\begin{equation}\label{bfmfg5}
B \left[ \begin{bmatrix}
      \bar w\\
      \bar v
  \end{bmatrix},
  \begin{bmatrix}
      \xi\\
      \eta
  \end{bmatrix}\right]
=
\left\langle 
  \begin{bmatrix}
      h_1\\
      h_2
  \end{bmatrix},
  \begin{bmatrix}
      \xi\\
      \eta
  \end{bmatrix}
\right\rangle .
\end{equation}
Given  $(w,v)\in H^{2k}(\Tt^d)\times
H^{2k}(\Tt^d)$, an integration by parts and \eqref{ineq5-1}  yield\begin{equation*}
B\left[ \begin{bmatrix}
      w\\
      v
  \end{bmatrix},
  \begin{bmatrix}
      w\\
      v
  \end{bmatrix}\right]
\geq
\epsilon(\|w\|_{L^2(\Tt^d)}^2+\|v\|_{L^2(\Tt^d)}^2+\|\Delta^kw\|_{L^2(\Tt^d)}^2+\|\Delta^kv\|_{L^2(\Tt^d)}^2).
\end{equation*}
Thus, by the Gagliardo--Nirenberg interpolation inequality,  there exists a constant, $C_\epsilon>0$, independent of \((w,v)\), for which
\begin{equation*}
B\left[ \begin{bmatrix}
      w\\
      v
  \end{bmatrix},
  \begin{bmatrix}
      w\\
      v
  \end{bmatrix}\right]
\geq
C_\epsilon\big(\|w\|_{H^{2k}(\Tt^d)}^2 + \|v\|_{H^{2k}(\Tt^d)}^2\big).
\end{equation*}
On the other hand, the smoothness of  $(m_0,u_0)$ and  the data, together with the fact that  $\min_{\Tt^d} m_0>0$, yields a constant, $C>0$, such that for all $(w_1,v_1)$, $(w_2, v_2)\in H^{2k}(\Tt^d)\times
H^{2k}(\Tt^d)$, we have
\begin{equation*}
\bigg|
B\left[
  \begin{bmatrix}
      w_1\\
      v_1
  \end{bmatrix},
  \begin{bmatrix}
      w_2\\
      v_2
  \end{bmatrix}
\right]\bigg|
\leq
C(\|w_1\|_{H^{2k}(\Tt^d)}+\|v_1\|_{H^{2k}(\Tt^d)})(\|w_2\|_{H^{2k}(\Tt^d)}+\|v_2\|_{H^{2k}(\Tt^d)}).
\end{equation*}
Hence, applying the Lax--Milgram theorem, there exists $(\bar w,\bar v)\in H^{2k}(\Tt^d)\times H^{2k}(\Tt^d)$ satisfying \eqref{bfmfg5}. Finally, using  elliptic-regularity  and  bootstrap arguments, we deduce that $(\bar w,\bar v)\in C^{4k}(\Tt^d)\times C^{4k}(\Tt^d)$, 
 which, together with \eqref{bfmfg5}, shows that $\Ll$ is surjective. 

Therefore, by the implicit function theorem, $\Lambda$ is relatively open in \([0,1]\).
\end{proof}

\subsubsection{Proof of Theorem \ref{mt1}}

Now, we prove the existence of weak solutions. 
\begin{proof}[Proof of Theorem~\ref{mt1}]
Fix $\epsilon \in (0,1)$ and let   $(m_\epsilon,u_\epsilon)\in C^{\infty}(\Tt^d)\times C^{\infty}(\Tt^d)$
be the solution of  \eqref{mfg5-epsi} (for  $F_\epsilon$ given by \eqref{crp5-1x}) constructed in Proposition \ref{asol}. 
Recall further that  $\min_{\Tt^d} m_\epsilon> 0$ by Lemma \eqref{mlb5}. Next, we establish some uniform estimates in \(\epsi\) for \((m_\epsilon,u_\epsilon)\).

  By \eqref{apri-pterm} with $\lambda=1$, and recalling that \(\min_{\Tt^d} \phi >0\) by Assumption~\ref{a0} and \(p_\epsi \leq 0\) by construction, we have that
\begin{equation}\label{apri5-2}
\int_{\Tt^d} (m_\epsilon g(m_\epsilon) + m_\epsilon|Du_\epsilon|^\alpha + |Du_\epsilon|^\alpha - p_\epsilon(m_\epsilon))\,\dx
+
\epsilon\int_{\Tt^d} \big(m_\epsilon^2 + (\Delta^k m_\epsilon)^2 + u_\epsilon^2 + (\Delta^k u_\epsilon)^2\big)\,\dx\leq C,
\end{equation}
where $C$ is independent of $\epsilon$ and $(m_\epsilon,u_\epsilon)$. On the other hand, integrating the first equation in \eqref{mfg5-epsi} over $\Tt^d$, 
we get from the conditions \(g\geq 0\) and \(p_\epsi \leq 0\) that %
\begin{align*}
\Big|\int_{\Tt^d} u_\epsilon\,\dx\Big|
\leq 
\Big|\int_{\Tt^d}H(Du_\epsilon)\,\dx \Big| + 
\int_{\Tt^d}\big(g(m_\epsilon) + |V(x)|  -p_\epsilon(m_\epsilon)+ \epsilon m_\epsilon\big)\,\dx.
\end{align*}
Then, using 
Remark \ref{estimateforH}, Assumptions~\ref{grH5-2} and~\ref{grg5-1}, and \eqref{apri5-2}, we conclude that
\begin{equation}\label{bintu5-1}
\Big|\int_{\Tt^d}u_\epsilon\,\dx\Big|
\leq
C,
\end{equation}
where $C>0$ is independent of $\epsilon$ and $(m_\epsilon,u_\epsilon)$.
The Poincar\'{e}--Wirtinger inequality and \eqref{apri5-2}--\eqref{bintu5-1} yield 
\begin{equation*}
\|u_\epsilon\|_{L^\alpha(\Tt^d)}
\leq
\Big\| u_\epsilon - \int_{\Tt^d}u_\epsilon\,\dx\Big\|_{L^\alpha(\Tt^d)}+C
\leq
C\|D u_\epsilon\|_{L^\alpha(\Tt^d)} +C 
\leq C.
\end{equation*}
The preceding estimate and \eqref{apri5-2} show that $u_\epsilon$ is  bounded in $W^{1,\alpha}(\Tt^d)$, uniformly with respect to $\epsilon$.

Using the monoticity of $F_\epsilon$ addressed in Remark \ref{remfemon} and the definition of $F$ in \eqref{defF1}, for all $(w,v)\in H^{2k}(\Tt^d; \Rr^+)\times  H^{2k}(\Tt^d)$, we have
\begin{equation}\label{eq:almweasol}
0= 
\left\langle 
F_\epsilon
  \begin{bmatrix}
      m_\epsilon\\
      u_\epsilon
  \end{bmatrix}
,
  \begin{bmatrix}
      m_\epsilon \\
      u_\epsilon
  \end{bmatrix}
-  \begin{bmatrix}
      w \\
      v
  \end{bmatrix}
\right\rangle 
\geq
\left\langle 
F_\epsilon
  \begin{bmatrix}
      w \\
      v
  \end{bmatrix}
,
  \begin{bmatrix}
      m_\epsilon \\
      u_\epsilon
  \end{bmatrix}
-  \begin{bmatrix}
      w \\
      v
  \end{bmatrix}
\right\rangle 
=
\left\langle 
F
 \begin{bmatrix}
      w \\
      v
  \end{bmatrix}
,
  \begin{bmatrix}
      m_\epsilon \\
      u_\epsilon 
  \end{bmatrix}
-  \begin{bmatrix}
      w \\
      v
  \end{bmatrix}
\right\rangle 
+
c_\epsilon,
\end{equation}
where 
\begin{align*}
c_\epsilon=\int_{\Tt^d}
\epsilon\big(
 w(m_\epsilon-w) + \Delta^k w \Delta^k(m- w)
+
v(u_\epsilon -v) + \Delta^k v \Delta^k (u_\epsilon-v)\big)+ p_\epsilon(w)(m_\epsilon-w)\Big)\,\dx.
\end{align*} 

Next, we observe that, extracting a subsequence if necessary, the uniform estimates in  \(\epsi\)  of \(\Vert u_\epsi\Vert_{W^{1,\alpha}(\Tt^d)}\)  proved above and Assumption~\ref{wcml1} combined with  \eqref{apri5-2} yield
\begin{align*}
&m_\epsilon \rightharpoonup m\ \mbox{in } L^1(\Tt^d),\\
&u_\epsilon \rightharpoonup u\ \mbox{in } W^{1,\alpha}(\Tt^d),\\
&\sqrt{\epsilon}(\sqrt{\epsilon}m_\epsilon), \sqrt{\epsilon}(\sqrt{\epsilon}u_\epsilon)
\to 0\ \mbox{in }H^{2k}(\Tt^d),
\end{align*}
for some \(m\in L^1(\Tt^d)\), with \(m \geq 0\), and \(u\in W^{1,\alpha}(\Tt^d) \). Because  $p_\epsilon(w)\to 0$ in $C(\Tt^d)$ whenever  $w\in H^{2k}(\Tt^d; \Rr^+)$, we conclude that
\begin{equation*}%\label{zeroc}
 \lim_{\epsilon\to0}c_\epsilon=0
\end{equation*}
for  each $(w,v)\in H^{2k}(\Tt^d; \Rr^+)\times H^{2k}(\Tt^d)$. 
Therefore,  arguing as  in Subsection~\ref{vpbfa5-1}, we can let \(\epsi\to0\) in \eqref{eq:almweasol} to conclude that
\begin{equation*}
0\geq
\lim_{\epsilon \to 0}\left(
\left\langle 
F
  \begin{bmatrix}
      w \\
      v
  \end{bmatrix}
,
  \begin{bmatrix}
      m_\epsilon \\
      u_\epsilon
  \end{bmatrix}
-  \begin{bmatrix}
      w \\
      v
  \end{bmatrix}
\right\rangle 
+c_\epsilon\right)
=
\left\langle 
F
  \begin{bmatrix}
      w \\
      v
  \end{bmatrix}
,
  \begin{bmatrix}
      m \\
      u
  \end{bmatrix}
-  \begin{bmatrix}
      w \\
      v
  \end{bmatrix}
\right\rangle .
\end{equation*}
Finally, we observe that Assumption~\ref{a0}  allows us to show the non-positivity of the right-hand side of the preceding estimate for all \((w,v)\in H^{2k}(\Tt^d; \Rr^+_0)\times H^{2k}(\Tt^d)\).  Thus, \((m,u)\in L^1(\Tt^d) \times W^{1,\alpha}(\Tt^d)\) is  a weak solution to \eqref{mfg5-1}.
\end{proof}

\section{Existence of solutions for time-dependent MFGs}
\label{t-dep5}
In this last section, we explain briefly how the prior techniques based on monotonicity can be used to study the time-dependent MFG in 
 \eqref{ext1}--\eqref{ext1bc}. Many of the steps are similar to the stationary case, so, here, we highlight only the main points and refer the interested reader to \cite{FeGoTa21}  for a detailed analysis.

Set $\Omega_T=(0,T)\times \Tt^d$ and fix  $m_0$, $u_T\in C^{4k}(\Tt^d)$ such that $\min_{x\in\Tt^d}m_0(x)>0$ and $\int_{\Tt^d}m_0 (x)\,\dx=1$, where   $k\in \Nn$ is such
that $2k\geq \frac{d+1}{2}+4$.
Consider the following (elliptic)  regularized version of the MFG in \eqref{ext1}:
\begin{equation}\label{extr1}
\begin{cases}
 u_t + \Delta u - H(Du) + g(m) + V + \epsilon\big(m+\sum_{|\beta|=2k} \partial_{t,x}^{2\beta}m\big)=0& \mbox{in}\ \Omega_T,\\
m_t - \Delta m - \div(mD_p H(Du)) + \epsilon\big(u+ \sum_{|\beta|=2k} \partial_{t,x}^{2\beta} u\big) =0& \mbox{in}\ \Omega_T,\\
m(0,\cdot)=m_0,\ u(T,\cdot)=u_T& \mbox{in}\ \Tt^d,
\end{cases}
\end{equation}
where 
\begin{equation*}
\begin{aligned}
\partial_{t,x}^\beta= \frac{\partial^{|\beta|}}{\partial t^{\beta_0}\partial x_1^{\beta_1}...\partial x_d^{\beta_d}},
\quad \beta=(\beta_0,\beta_1,...,\beta_d)\in\Nn_0^{d+1}, \quad |\beta|
= \sum_{i=0}^d \beta_i. \end{aligned}
\end{equation*}
The presence of high-order derivatives in time requires the following additional boundary conditions. For each $i\in \Nn$ such that $2\leq i\leq 2k$,
\begin{equation*}%\label{bddrt1}
\begin{cases}
\sum_{j=1}^{2k}\partial_t^{2j-1}(M_j m)=0  &\mbox{on } \{T\}\times \Tt^d,\\
\sum_{j=1}^{2k}\partial_t^{2j-1}(M_j u)=0  &\mbox{on } \{0\}\times \Tt^d,\\
\sum_{j=i}^{2k}\partial_t^{2j-1}(M_j m)=0 &\mbox{on } \{0,T\}\times \Tt^d,\\
\sum_{j=i}^{2k}\partial_t^{2j-1}(M_j u)=0&\mbox{on } \{0,T\}\times \Tt^d,
\end{cases}
\end{equation*}
where $M_j= \sum_{|\gamma|=2k-j}\partial_x^{2\gamma}$. These boundary conditions are crucial both to preserve the monotonicity
of the original problem  and enable a priori estimates for classical and weak solutions of \eqref{extr1}.

 We show how
to address \eqref{extr1} using similar techniques to those in Subsection~\ref{vpbfa5-1} under Assumptions~\ref{a0}--\ref{wcml1}
(with the obvious modifications to incorporate the dependence
in \(t\)). 
% Here, however, we focus only on the approach described in Subsection~\ref{vpbfa5-1}.

Set
\begin{equation}\label{eq:defsets}
\begin{aligned}
\Aa&=
\left\{m\in H^{2k}(\Omega_T)\ |\ m(0,x)=m_0(x),\ m\geq 0\right\},\\
\Bb&=
\left\{u\in H^{2k}(\Omega_T)\ |\ u(T,x)=u_T\right\}.
\end{aligned}
\end{equation}
We prove the existence of a pair 
$(m,u)\in \Aa\times \Bb$ that is a solution to the variational inequality associated with \eqref{extr1}. In particular, this pair satisfies
\begin{align}
&\int_0^T\!\!\!\int_{\Tt^d}\left(
 u_t + \Delta u - H(Du) + g(m) + V 
\right)(w-m)\,\dx\dt\nonumber\\
&\qquad+
\epsilon\int_0^T\!\!\!\int_{\Tt^d}\bigg[m (w-m)
+\sum_{|\beta|=2k}\partial_{t,x}^\beta m \partial_{t,x}^\beta(w-m)
\bigg]\,\dx\dt\geq0,\label{wsmt1}\\
&\int_0^T\!\!\!\int_{\Tt^d}\left(
m_t -\Delta m -\div\big(mD_pH(x,Du)\big)
\right (v-u)\,\dx\dt\nonumber\\
&\qquad+
\epsilon\int_0^T\!\!\!\int_{\Tt^d}\bigg[
u (v-u)
+
\sum_{|\beta|=2k}\partial_{t,x}^\beta u \partial_{t,x}^\beta
(v-u)
\bigg]\,\dx\dt=0\label{wsut1}
\end{align}
for all $(w, v)\in \Aa\times \Bb$.

\subsection{A priori estimates}
To prove a similar estimate to that in \eqref{apriori5-1}, choose $w=m_0\in \Aa$ in \eqref{wsmt1} and $v=u_T\in \Bb$ in \eqref{wsut1}. Then, adding the resulting inequalities, integrating by parts, and using Assumption~\ref{grg5-1},    we conclude that
\begin{align*}
&\int_0^T\!\!\!\int_{\Tt^d} \big(m g(m) + m |Du|^\alpha+m_0|Du|^\alpha\big)\,\dx\dt\\
&\qquad+
\epsilon\int_0^T\!\!\!\int_{\Tt^d} \bigg(m^2 + \sum_{|\beta|=2k}(\partial_{t,x}^\beta m)^2 + u^2 + \sum_{|\beta|=2k}(\partial_{t,x}^\beta u)^2\bigg)\,\dx\dt
\leq  C \big(\|m\|_{L^1(\Omega_T)}+ \|Du\|_{L^1(\Omega_T)}+1\big).
\end{align*}

By Assumption~\ref{grg5-1}  and the fact that $\min_{x\in\Tt^d}m_0(x)\geq c>0$ for some $c>0$, 
 we can find  for each  $\delta \in(0,1)$ a constant, $C_\delta$, independent of $\epsilon$ and $(m,u)$, for which we  have
\begin{align*}
\int_0^T\!\!\!\int_{\Tt^d}(m + |Du|)\,\dx\dt
\leq 
\delta\Big(\int_0^T\!\!\!\int_{\Tt^d}(mg(m) + |Du|^\alpha)\,\dx\dt\Big) + C_\delta.
\end{align*}
Therefore, choosing $\delta\in(0,1)$ small enough, there is a  constant, $C>0$, that is independent of $\epsilon$ and $(m,u)$, such that
\begin{equation}\label{apritr1}
 \int_0^T\!\!\!\int_{\Tt^d}(mg(m)+|Du|^\alpha)\,\dx\dt
+
\epsilon\int_0^T\!\!\!\int_{\Tt^d} \Big[m^2 + \sum_{|\beta|=2k}(\partial_{t,x}^\beta m)^2 + u^2 + \sum_{|\beta|=2k}(\partial_{t,x}^\beta u)^2\Big]\,\dx\dt\leq C.
\end{equation}

\subsection{Variational problem and bilinear form approach}\label{vpbftr1}

Let
\[
\begin{aligned}
        \tilde \Aa&=
        \left\{\tilde m\in H^{2k}(\Omega_T)\ |\ \tilde m(0,x)=0,\ \tilde m+m_0\geq 0\right\},\\
        \tilde \Bb&=
        \left\{\tilde u\in H^{2k}(\Omega_T)\ |\ \tilde u(T,x)=0\right\}.
\end{aligned}
\]
For $(m,u)\in \Aa \times \Bb$, we write $u=u_T+\tilde u$ and $m=m_0+\tilde m$ with $(\tilde m,\tilde u)\in \tilde \Aa \times \tilde \Bb$.
Accordingly, we modify $H$, $g$, and $V$, as follows:
\begin{align*}
&\tilde H(D \tilde u)= H( D u_T+D \tilde u),\ \ 
\tilde g(\tilde m)= g(m_0+\tilde m),\ \ 
\tilde V(t,x)= V(t,x)+\Delta u_T.
\end{align*}
Set
\begin{align*}
f_1[\tilde m,\tilde u]&=  \tilde u_t + \Delta \tilde u- \tilde H(D \tilde u) + \tilde g(\tilde m) + \tilde V,\\
f_2[\tilde m,\tilde u]&= \tilde m_t - \Delta \tilde m -\Delta m_0- \div((m_0+\tilde m)D_p \tilde H(D\tilde u)).
\end{align*}

Fix $(\tilde m_1,\tilde u_1)\in H^{2k-1}(\Omega_T)\times H^{2k-1}(\Omega_T)$,
and  for $w$, $v_1$, $v_2\in H^{2k}(\Omega_T)$, define
\begin{align*}
J(w)
&=
\int_0^T\!\!\!\int_{\Tt^d}\bigg[\frac{\epsilon}{2}\bigg( w^2+ \sum_{|\beta|=2k}(\partial_{t,x}^\beta w )^2\bigg)+f_1[\tilde m_1,\tilde u_1]w\bigg]\,\dx\dt,\\
B[v_1,v_2]
&=
\epsilon\int_0^T\!\!\!\int_{\Tt^d}\Big[ v_1v_2 + \sum_{|\beta|=2k}\partial_{t,x}^\beta v_1\partial_{t,x}^\beta v_2\Big]\,\dx\dt.
\end{align*}
As in Subsection~\ref{vpbfa5-1},
we search for a pair, $(\tilde m,\tilde u)\in \tilde \Aa\times \tilde \Bb$, satisfying
\begin{equation}\label{vpt1}
J(\tilde m)=\inf_{w\in \tilde \Aa}J(w)
\end{equation}
and
\begin{equation}\label{bft1}
B[\tilde u,v]=\left\langle f_2[\tilde m_1,\tilde u_1],v\right\rangle \ \ \mbox{for all } v\in \tilde \Bb.
\end{equation}
It can be checked that   similar arguments to those   of Subsection~\ref{vpbfa5-1}
allow us to prove that such   a pair  $(\tilde m,\tilde u)\in \tilde \Aa\times \tilde \Bb$ exists, being   $\tilde m$ the  unique minimizer to \eqref{vpt1} and $\tilde u$ the unique weak solution to \eqref{bft1}, respectively.

\subsection{Existence of a fixed-point}\label{fpttr1}
Next, we construct a solution, $(\tilde m,\tilde u)$, to  \eqref{wsmt1}--\eqref{wsut1}. As in the previous section, we use 
Theorem~\ref{Sch4-1}. Let $\widehat \Aa$ and $\widehat \Bb$ be given by
\begin{align*}
\widehat \Aa
&=
\left\{\tilde m\in H^{2k-1}(\Omega_T)\ |\ \tilde m(0,x)=0,\ \tilde m+m_0\geq 0\right\},\\
\widehat \Bb
&=
\left\{\tilde u\in H^{2k-1}(\Omega_T)\ |\ \tilde u(T,x)=0\right\}.
\end{align*}
Define $A:\widehat \Aa\times \widehat \Bb\to \widehat \Aa\times \widehat \Bb$ by
\begin{align*}
\begin{bmatrix}
      \tilde m_2 \\
      \tilde u_2
\end{bmatrix}
=
A
\begin{bmatrix}
      \tilde m_1 \\
      \tilde u_1
\end{bmatrix},
\end{align*}
where $(\tilde m_2,\tilde u_2)$ satisfies \eqref{vpt1} and \eqref{bft1}, respectively. Using a similar argument to the one in Subsection~\ref{vpbfa5-1} combined with the a priori estimates \eqref{apritr1}, %\ref{vpa4-1} and 
we have that $A$ is continuous and compact.

Next, let
\begin{align*}
S=\left\{
(\tilde m,\tilde u)\in \widehat \Aa\times \widehat \Bb\ \Big|\ 
\begin{bmatrix}
      \tilde m \\
      \tilde u
\end{bmatrix}
=
\lambda A
\begin{bmatrix}
      \tilde m \\
      \tilde u
\end{bmatrix}
 \mbox{ for some }
\lambda \in [0,1]
\right\}.
\end{align*}
We want to prove that $S$ is bounded. When $\lambda=0$, we have $(\tilde m,\tilde u)\equiv (0,0)$. When 
 $\lambda\in (0,1]$, we proceed as follows.
 Assume that there exists $(\tilde m_\lambda, \tilde u_\lambda)\in S$. Because $(\frac{\tilde m_\lambda}{\lambda}, \frac{\tilde u_\lambda}{\lambda})$ is a pair of the minimizer to \eqref{vpt1} and the weak solution to \eqref{bft1},  
we have, for all
 $w\in \Aa_0$ and $v\in \Bb_0$,
 that%
\begin{align*}%\label{vit1}
&\int_0^T\!\!\!\int_{\Tt^d} \Big[\epsilon \bigg(\tilde m_\lambda(\lambda w - \tilde m_\lambda)+
\sum_{|\beta|=2k}\partial_{t,x}^\beta \tilde m_\lambda\partial_{t,x}^\beta(\lambda w- \tilde m_\lambda) \bigg) + 
\lambda f_1[\tilde m_\lambda, \tilde u_\lambda](\lambda w-\tilde m_\lambda)\Big]\,\dx\dt
\geq 0,\\
&\int_0^T\!\!\!\int_{\Tt^d}\Big[ \tilde u_\lambda v + \sum_{|\beta|=2k}\partial_{t,x}^\beta \tilde u_\lambda \partial_{t,x}^\beta v\Big]\,\dx\dt
=
\lambda\int_0^T\int_{\Tt^d}f_2[\tilde m_\lambda,\tilde u_\lambda]v
\,\dx\dt.
\end{align*}
Choosing $w=0$ and $v=\tilde u_\lambda$ in the preceding
inequalities,
and  adding the resulting estimates, we get
\begin{align*}
&\lambda\int_0^T\!\!\!\int_{\Tt^d}(m_\lambda g(m_\lambda)+(m_\lambda+m_0) |D
u_\lambda|^\alpha)\,\dx\dt\\
&\qquad+
\epsilon\int_0^T\!\!\!\int_{\Tt^d} \Big[\tilde m_\lambda^2 + \sum_{|\beta|=2k}(\partial_{t,x}^\beta \tilde m_\lambda)^2 + \tilde u_\lambda^2 + \sum_{|\beta|=2k}(\partial_{t,x}^\beta \tilde u_\lambda)^2\Big]\,\dx\dt\leq C,
\end{align*}
where $C$ is independent of $\lambda$.
By the Gagliardo--Nirenberg interpolation inequality, $(\tilde m_\lambda, \tilde u_\lambda)$ is  bounded in $H^{2k}(\Omega_T)\times H^{2k}(\Omega_T)$,
uniformly with respect to $\lambda$.
Therefore, $S$ is bounded in $\widehat \Aa\times \widehat \Bb$.
Hence, we can apply Theorem~\ref{Sch4-1} to $A$ and, thus, we construct a fixed point, $(\tilde m^\ast,\tilde u^\ast)\in \widetilde \Aa\times \widetilde \Bb$. Therefore $(m^\ast,u^\ast)=(\tilde m^\ast,\tilde u^\ast)+(m_0,u_T)$ solves
 \eqref{wsmt1}--\eqref{wsut1}.

\subsection{Weak solutions}

Let $(m_\epsilon,u_\epsilon)$ solve \eqref{wsmt1}--\eqref{wsut1}. 
We first establish some estimates for $(m_\epsilon, u_\epsilon)$ that are uniform in $\epsilon$.  

By Assumption~\ref{wcml1} and \eqref{apritr1}, there is $m\in L^1(\Omega_T)$ such that, extracting a subsequence if necessary, $m_\epsilon\rightharpoonup m$ in $L^1(\Omega_T)$. For $t\in (0,T)$, define
\begin{align*}
\left\langle u_\epsilon\right\rangle (t)=
\int_{\Tt^d}u_\epsilon(t,x)\,\dx.
\end{align*}
Setting $\overline u_\epsilon = u_\epsilon - \left\langle u_\epsilon\right\rangle $ and combining the Poincar\'{e}--Wirtinger inequality with \eqref{apritr1}, we obtain 
\begin{align*}
\|\overline u_\epsilon\|_{L^\alpha(\Omega_T)}^\alpha
\leq C\|D \overline u_\epsilon\|_{L^\alpha(\Omega_T)}^\alpha
\leq C
\end{align*}
for some  constant, $C>0$, uniform with respect to $\epsilon$. Thus, there is $\overline u\in L^\alpha([0,T]; W^{1,\alpha}(\Tt^d))$ such that, up to a subsequence, 
$\overline u_\epsilon\rightharpoonup \overline u$ in $L^\alpha([0,T]; W^{1,\alpha}(\Tt^d))$.
In particular, using \eqref{apritr1} once more, we have
\begin{align}\label{eq:comptd}
m_\epsilon \rightharpoonup m \mbox{ in } L^1(\Omega_T),\ 
\overline u_\epsilon \rightharpoonup \overline u \mbox{ in } W^{1,\alpha}(\Omega_T),\
\epsilon m_\epsilon \to 0 \mbox{ in }H^{2k}(\Omega_T), \text{ and } 
\epsilon u_\epsilon \to 0 \mbox{ in }H^{2k}(\Omega_T).
\end{align}

Let $\Aa^\ast$ be the subset of $\Aa$ (see \eqref{eq:defsets}) given by
\begin{align*}
\Aa^\ast
=
\left\{w\in \Aa\ \big|\ \int_{\Tt^d}w \,\dx=1 \ \mbox{for a.e. } t\in[0,T]\right\}.
\end{align*}
As before, for $\epsilon\in (0,1)$ and $(m,u)$ sufficiently smooth, we define two operators, $F$ and $F_\epsilon$, associated with the original time-dependent MFG and the regularized one, respectively, by
setting
\begin{align*}
F
  \begin{bmatrix}
      m \\
      u
  \end{bmatrix}
&=
  \begin{bmatrix}
      u_t+ \Delta u -H(Du) + g(m) + V \\
      m_t -\Delta m -\div\big(m D_p H(Du)\big) 
  \end{bmatrix},\\
F_\epsi\begin{bmatrix}
      m \\
      u
  \end{bmatrix}
  &= \begin{bmatrix}
       u_t+ \Delta u -H(Du) + g(m)+V+ \epsilon(m+\Delta^{2k}m ) \\ m_t -\Delta m -\div\big(m D_p H(Du)\big)+ \epsilon(u+ \Delta^{2k} u) 
      \end{bmatrix}. 
\end{align*}
Under  Assumptions~\ref{a0}--\ref{grg5-1} (adapted to the time-dependent case), and having in mind Remark~\ref{rmk:notation}, it can be checked that $F$ and $F_\epsilon$ are monotone over $\Aa^\ast \times \Bb$. 
Because $(m_\epsilon, u_\epsilon)$ satisfies \eqref{wsmt1}--\eqref{wsut1}, for all $(w,v)\in\Aa\times \Bb$, we have
\begin{align*}
\left\langle 
F_\epsilon
  \begin{bmatrix}
      m_\epsilon\\
      u_\epsilon
  \end{bmatrix}
,
  \begin{bmatrix}
      m_\epsilon \\
      u_\epsilon
  \end{bmatrix}
  -
  \begin{bmatrix}
      w \\
      v
  \end{bmatrix}
\right\rangle 
\leq 0.
\end{align*}
Fix $(w,v)\in \Aa^\ast \times \Bb$. 
Then, it can be checked that (see 
 Lemma~7.5 in \cite{FeGoTa21})
\begin{align*}
\left\langle 
F_\epsilon
  \begin{bmatrix}
      w\\
      v
  \end{bmatrix}
,
  \begin{bmatrix}
      m_\epsilon \\
      u_\epsilon
  \end{bmatrix}
  -
  \begin{bmatrix}
      w \\
      v
  \end{bmatrix}
\right\rangle 
=
\left\langle 
F_\epsilon
  \begin{bmatrix}
      w\\
      v
  \end{bmatrix}
,
  \begin{bmatrix}
      m_\epsilon \\
      \overline u_\epsilon
  \end{bmatrix}
  -
  \begin{bmatrix}
      w \\
      v
  \end{bmatrix}
\right\rangle ,
\end{align*}
where, we recall, $\overline u_\epsilon = u_\epsilon - \left\langle u_\epsilon\right\rangle $.
Hence, as in \eqref{eq5-1-1}, for $(w,v)\in \Aa^\ast\times \Bb$, we have that
\begin{equation}
\begin{aligned}\label{eq:contdr}
0\geq
\left\langle  
F_\epsilon
  \begin{bmatrix}
      m_\epsilon\\
      u_\epsilon
  \end{bmatrix}
,
  \begin{bmatrix}
      m_\epsilon \\
      u_\epsilon
  \end{bmatrix}
  -
  \begin{bmatrix}
      w \\
      v
  \end{bmatrix}
\right\rangle 
\geq
\left\langle  
F_\epsilon
  \begin{bmatrix}
      w\\
      v
  \end{bmatrix}
,
  \begin{bmatrix}
      m_\epsilon \\
      u_\epsilon
  \end{bmatrix}
  -
  \begin{bmatrix}
      w \\
      v
  \end{bmatrix}
\right\rangle 
&=
\left\langle  
F_\epsilon
  \begin{bmatrix}
      w\\
      v
  \end{bmatrix}
,
  \begin{bmatrix}
      m_\epsilon \\
      \overline u_\epsilon
  \end{bmatrix}
  -
  \begin{bmatrix}
      w \\
      v
  \end{bmatrix}
\right\rangle \\
&=
\left\langle  
F
  \begin{bmatrix}
      w\\
      v
  \end{bmatrix}
,
  \begin{bmatrix}
      m_\epsilon \\
     \overline u_\epsilon
  \end{bmatrix}
  -
  \begin{bmatrix}
      w \\
      v
  \end{bmatrix}
\right\rangle 
+
h_\epsilon,
\end{aligned}
\end{equation}
where
\begin{align*}
h_\epsilon&=
\epsilon\int_0^T\int_{\Tt^d}\Big[
w(w-m_\epsilon)+\sum_{|\beta|=2k}\partial_{t,x}^\beta w\partial_{t,x}^\beta(w-m_\epsilon)
+
v(v-\overline u_\epsilon)+\sum_{|\beta|=2k}\partial_{t,x}^\beta v\partial_{t,x}^\beta(v-\overline u_\epsilon)
\Big]\,\dx\dt.
\end{align*}
By   \eqref{eq:comptd} and \eqref{eq:contdr},  $\lim_{\epsilon\to0}h_\epsilon=0$ and  
\begin{equation*}
0\geq
\lim_{\epsilon\to0}\left(
\left\langle  
F
  \begin{bmatrix}
      w\\
      v
  \end{bmatrix}
,
  \begin{bmatrix}
      m_\epsilon \\
     \overline u_\epsilon
  \end{bmatrix}
  -
  \begin{bmatrix}
      w \\
      v
  \end{bmatrix}
\right\rangle 
+
h_\epsilon
\right)
=
\left\langle  
F
  \begin{bmatrix}
      w\\
      v
  \end{bmatrix}
,
  \begin{bmatrix}
      m \\
     \overline u
  \end{bmatrix}
  -
  \begin{bmatrix}
      w \\
      v
  \end{bmatrix}
\right\rangle , 
\end{equation*}
which is a weak solution to \eqref{ext1} in the spirit of Remark \ref{412}. To get a weak solution like in Theorem~\ref{mt1}, where the test functions 
belong to $\Aa\times \Bb$, we would have to prove the convergence of $u_\epsilon$. The authors leave this matter as an open problem for the interested
reader.

 \bibliographystyle{plain}

\def\cprime{$'$}

\end{document}